\theoremstyle{plain}
\newtheorem{thm}{Theorem}[section]
\newtheorem{theorem}{Theorem}
\newtheorem{cor}[thm]{Corollary}
\newtheorem{lemma}[thm]{Lemma}
\newtheorem{prop}[thm]{Proposition}
\newtheorem{example}{Example}
\newtheorem{assumption}{Assumption}
\theoremstyle{definition}
\newtheorem{defn}{Definition}[section]
\theoremstyle{remark}
\newtheorem{rem}{Remark}
\newcommand{\C}{\mathbb C}
\newcommand{\R}{\mathbb R}
\newcommand{\N}{\mathbb N}
\def\eps{\varepsilon}
\def\ri{{\rm i}}
\def\bi{\begin{itemize}}
	\def\ei{\end{itemize}}
\newcommand{\Z}{\mathbb{Z}}
\newcommand{\cD}{{\mathcal D}}
\newcommand{\cE}{{\mathcal E}}
\newcommand{\cH}{{\mathcal H}}
\newcommand{\cL}{{\mathcal L}}
\newcommand{\cP}{{\mathcal P}}
\newcommand{\pa}{\partial}
\newcommand{\ga}{\gamma}
\newcommand{\bU}{{\mathbf U}}
\newcommand{\bV}{{\mathbf V}}
 \def\dd{\, {\rm d}}
\DeclareMathOperator{\Real}{Re}
\DeclareMathOperator{\Imag}{Im}
\newcommand{\dist}{\text{\rm dist}}
\newcommand{\supp}{\text{\rm supp}}
\DeclareMathOperator\perm{\boldsymbol{\epsilon}}
\newcommand{\bspm}{\left(\begin{smallmatrix}}\newcommand{\espm}{\end{smallmatrix}\right)}
\newcommand{\bpm}{\begin{pmatrix}}\newcommand{\epm}{\end{pmatrix}}
\def\blem{\begin{lemma}}\def\elem{\end{lemma}}
\def\bthm{\begin{theorem}}\def\ethm{\end{theorem}}
\def\bcor{\begin{corollary}}\def\ecor{\end{corollary}}
\def\beq{\begin{equation}}\def\eeq{\end{equation}}
\def\beqq{\begin{equation*}}\def\eeqq{\end{equation*}}
\def\bal{\begin{align}}\def\eal{\end{align}}
\def\bpf{\begin{proof}}\def\epf{\end{proof}}
\def\bex{\begin{example}}\def\eex{\end{example}}
\def\brem{\begin{rem}}\def\erem{\end{rem}}
\def\bass{\begin{assumption}}\def\eass{\end{assumption}}
\def\bprop{\begin{prop}}\def\eprop{\end{prop}}
\def\bdefi{\begin{definition}}\def\edefi{\end{definition}}
\newcommand\customitem[1]{\item[#1{\MakeLinkTarget[item]{}}]\def\@currentlabel{#1}}\makeatother
\begin{document}
	\date{\today}
	\title{Polychromatic Localized Waves with Complex Frequencies in Nonlinear Maxwell Equations with Material Dispersion}
		\author{ Tom\'a\v{s} Dohnal$^1$, Maximilian Hanisch$^1$, and Runan He$^2$ }
	\affil{$^1$ Institut f\"{u}r Mathematik,  Martin-Luther-Universit\"{a}t Halle-Wittenberg,\\ 06120 Halle (Saale), Germany\\
		$^2$ Instituto de Ciencias Matem\'aticas (ICMAT), Calle Nicolas Cabrera 13, 28049 Madrid, Spain\\ 
		\small{tomas.dohnal@mathematik.uni-halle.de, maximilian.hanisch@mathematik.uni-halle.de, runan.he@icmat.es}}
	\maketitle

\abstract{We study the existence of polychromatic solutions of cubically nonlinear Maxwell equations in the whole space and with dispersive media, i.e., with a time delayed polarization. Due to the complex nature of the dielectric function, the frequencies are complex, resulting in a decay in time.  The geometry is that of a waveguide in $x$ with the propagation direction being $y$ and the solutions are localized in $x$ and TM-polarized. These are often referred to as breathers. They are given as a Fourier series in $y$ and $t$ with the leading frequency $\omega$ being an eigenvalue of a corresponding operator pencil on $\R$ (in the $x$ variable). Each term in the series corresponds to a different temporal decay rate or a different frequency. The series is constructed iteratively via a sequence of linear ordinary differential equations. Our general result provides the existence under some assumptions  on the spectrum and on estimates of the resolvent of the corresponding linear operator. We also produce an example of a waveguide given by the interface of two spatially homogeneous physically relevant media for which these assumptions are satisfied. For such an interface setting the constructed solutions correspond to nonlinear polychromatic surface plasmons.}

\medskip
\textbf{Keywords:} nonlinear Maxwell equations, polychromatic solution, breather, dispersive material, retarded material law, surface plasmon, material interface
\section{Introduction}

We study the existence of polychromatic solutions of Maxwell equations in the absence of free charges
\begin{equation}\label{eqn:MaxwellEq}
	\partial_t\mathcal{D}=\nabla\times\mathcal{H},\quad	\boldsymbol{\mu}_0\partial_t\mathcal{H}=-\nabla\times\mathcal{E},\quad \nabla\cdot\mathcal{D}=\nabla\cdot\mathcal{H}=0,
\end{equation}
in a waveguide geometry. The dependence of the electric displacement field $\cD$ on the electric field $\cE$ is cubically nonlinear and generally nonlocal (in time), i.e., 
\begin{equation}\label{eqn:Displacement}
	\mathcal{D} = \perm_0 \cE + \cP, \quad \cP=\cP^{(1)} +\cP^{(2)} +\cP^{(3)},
\end{equation}
where $\cE=\cE(x,y,t)$ and, for each $j=1,2,3$,
\begin{equation}\label{E:P}
	\begin{aligned}
		\cP^{(1)}_j&=\cP^{(1)}_j(x,y,t)=\perm_0 \sum_{p=1}^3 \left(\chi^{(1)}_{j,p}(x)*_\R\mathcal{E}_p(x,y,\cdot)\right)(t), \\ 
		\cP^{(2)}_j&=\cP^{(2)}_j(x,y,t)=\perm_0 \sum_{p,q=1}^3
		\left(\chi^{(2)}_{j,p,q}(x)*_{\R^2}
		(\cE_p(x,y,\cdot)\cE_q(x,y,\cdot))\right)(t,t),\\
		\cP^{(3)}_j&=\cP^{(3)}_j(x,y,t)=\perm_0 \sum_{p,q,r=1}^3
		\left(\chi^{(3)}_{j,p,q,r}(x)*_{\R^3}
		(\cE_p(x,y,\cdot)\cE_q(x,y,\cdot)\cE_r(x,y,\cdot))\right)(t,t,t)
	\end{aligned}
\end{equation}
and where $\chi^{(1)}, \chi^{(2)}$, and $\chi^{(3)}$ are the linear, quadratic, and cubic electric susceptibility tensors, respectively, whose components are generally distribution-valued. The symbol $*_{\R^n}$ denotes the $n$-fold convolution (in time) over $\R$.
As indicated, we allow the material (described by the susceptibilities) to depend on the spatial variable $x$.  Natural function spaces for solutions of \eqref{eqn:MaxwellEq} are based on $H(\mathrm{curl})$ and $H(\mathrm{div})$ spaces with respect to the spatial variables. We construct solutions which are smooth in $t$ and satisfy
\beq\label{E:fn-sp-Maxw}
\cE \in C^\infty((0,\infty), H_\mathrm{loc}(\mathrm{curl},\R^3)), \ \cH \in  C^\infty((0,\infty), H^1_\mathrm{loc}(\R^3)), \ \text{and} \ \nabla \cdot \cD(\cdot,t)=\nabla \cdot \cH(\cdot,t)=0 \ \forall t>0.
\eeq
In addition, our solutions are decaying in $t$, periodic in $y$, independent of $z$, and localized in $x$, i.e., they satisfy $L^2$-integrability in $x$ over the whole $\R$, see Theorem \ref{T:main}. In particular, we have
$$\cE=\cE(x,y,t), \ \cH=\cH(x,y,t), \ \cD=\cD(x,y,t), \ \cP = \cP(x,y,t).$$
Moreover, below we assume a plane wave type dependence on $y$ and $t$.

Next, let us clarify the meaning of the convolution terms in the polarization $\cP$. If $\chi^{(1)}:\R^2\to\R^{3\times 3}$, $\chi^{(2)}:\R^3\to\R^{3\times 3\times 3}$, and $\chi^{(3)}:\R^4\to\R^{3\times 3\times 3\times 3}$  are functions, then we have
\begin{equation}\label{eqn:D-for-fns}
	\begin{aligned}
		\cP^{(1)}_j(x,y,t)&= \sum_{p=1}^3\int_{\mathbb{R}}\chi^{(1)}_{j,p}(x, t-s)\mathcal{E}_p(x,y,s)\mathrm{d}s,\\
		\cP^{(2)}_j(x,y,t)&=\sum_{p,q=1}^3\int_{\mathbb{R}^2}\chi^{(2)}_{j,p,q}(x, t-s_1,t-s_2)\mathcal{E}_p(x,y,s_1)\mathcal{E}_q(x,y,s_2)\mathrm{d}(s_1,s_2),\\
		\cP^{(3)}_j(x,y,t)&=\sum_{p,q,r=1}^3\int_{\mathbb{R}^3}\chi^{(3)}_{j,p,q,r}\left(x, t-s_1, t-s_2,t-s_3\right) \mathcal{E}_p(x,y,s_1)\mathcal{E}_q(x,y,s_2)\mathcal{E}_r(x,y,s_3)\mathrm{d}(s_1,s_2,s_3).
	\end{aligned}
\end{equation}
Another typical example is the case of the instantaneous response when $\chi^{(n)}(x)=a^{(n)}(x)\delta$, where $a^{(1)}:\R\to \R^{3\times 3}$, $a^{(2)}:\R\to \R^{3\times 3\times 3}$, $a^{(3)}:\R\to \R^{3\times 3\times 3\times 3}$ are some functions and $\delta$ is the Dirac delta in $\R^n$. Then \eqref{E:P} becomes
$$			\begin{aligned}
	\cP^{(1)}_j(x,y,t)&= \sum_{p=1}^3a^{(1)}_{j,p}(x)\mathcal{E}_p(x,y,t),\\
	\cP^{(2)}_j(x,y,t)&=\sum_{p,q=1}^3a^{(2)}_{j,p,q}(x)  \mathcal{E}_p(x,y,t)\mathcal{E}_q(x,y,t),\\
	\cP^{(3)}_j(x,y,t)&=\sum_{p,q,r=1}^3a^{(3)}_{j,p,q,r}(x)  \mathcal{E}_p(x,y,t)\mathcal{E}_q(x,y,t)\mathcal{E}_r(x,y,t),
\end{aligned}
$$
see Appendix \ref{S:distrib} for more explanation. In order to cover both of the above cases (and more), we allow $\chi^{(n)}(x), n=1,2,3,$ to be (tensor valued) distributions in time for each $x\in \R$.
We choose $\chi^{(n)}(x)$ such that the system is causal and possesses a finite memory, i.e., there are $T^{(1)},T^{(2)},T^{(3)}>0$ such that $\supp (\chi^{(n)}(x))\subset [0,T^{(n)}]^n$ for all $n\in\{1,2,3\}, x\in \R$. The finiteness of the memory is a technical assumption needed for 
the convergence of the integral in a Fourier-Laplace transform and the assumption $\supp (\chi^{(n)}(x))\subset [0,\infty)$ is for causality. 
Moreover, we allow for the material to feature an interface of two (generally non-homogeneous) media. Altogether, this means that the susceptibility has the form  
\begin{equation}
	\label{eqn:suscept}
	\chi^{(n)}(x)=\begin{cases}
		\chi^{(n)}_+(x),~&x>0,\\
		\chi^{(n)}_-(x),~&x<0,
	\end{cases}
\end{equation}
for each $n=1,2,3$, where $\chi^{(n)}_\pm$ are tensor valued distributions of compact support, i.e.,
$$\chi^{(1)}_{\pm_{j,p}}(x) \in \cD_c'(\R),  \chi^{(2)}_{\pm_{j,p,q}}(x) \in \cD_c'(\R^2), \ \text{and} \ \chi^{(3)}_{\pm_{j,p,q,r}}(x) \in \cD_c'(\R^3) \ \text{for all} \ j,p,q,r \in \{1,2,3\}, x \in \R_\pm,$$
where $\cD'_c(\R^n)$ denotes the space of distributions over $\R^n$ with compact support, see Appendix \ref{S:distrib} for the definition.

In the presence of the interface we expect to construct polychromatic solutions of \eqref{eqn:MaxwellEq}, \eqref{eqn:Displacement}, \eqref{E:P}, \eqref{eqn:suscept} localized at $x=0$ and propagating in the $y-$direction in the form of a superposition of plane waves.
In the linear setting $(\chi^{(2)}=\chi^{(3)}=0)$ monochromatic solutions $\left(\mathcal{E},\ \mathcal{H}\right)=\left(E,\ H\right)(x)e^{\mathrm{i}(ky-\omega t)}+\text{c.c.}$ with $k\in \R, \omega\in \C$, and $(E,H)$ localized at the interface	are known to exist \cite{Pitarke_2007,BDPW25} under a specific condition on $\chi^{(1)}_\pm$ and the frequency $\omega$. They are called surface plasmon polaritons \cite{maier2007,Pitarke_2007}. The abbreviation c.c. denotes the complex conjugate of the preceding terms. The frequency $\omega$ is an eigenvalue of a corresponding time harmonic linear Maxwell operator, more generally of an operator pencil. A linear surface plasmon is the main building block of our polychromatic solutions of the nonlinear problem.

For any $\omega=\omega_R +\ri \omega_I\in \C$, our \textit{polychromatic ansatz} for equations \eqref{eqn:MaxwellEq} is given by
\begin{equation}
	\label{eqn:polyAnsatz}
	\begin{split}
		(\mathcal{E},\mathcal{H})(x,y,t)=\sum\limits_{n\in \Z} e^{\ri n(ky-\omega_R t)} \sum\limits_{\nu\in \N} e^{\nu\omega_I t} (E^{n,\nu},H^{n,\nu})(x),
	\end{split}
\end{equation}
where $k\in \R$ is fixed and the condition $(E^{-n,\nu},H^{-n,\nu})=(\overline{E}^{n,\nu},\overline{H}^{n,\nu})$ is imposed for any $(n,\nu)\in \N^2$ in order to generate real fields.  We use the notation $\N:=\{1,2,\dots\}$. Physically relevant solutions satisfy $\omega_I\leq 0$ corresponding to lossy or conservative fields.
Roughly speaking, our main result, see Theorem \ref{T:main}, shows that solutions of the form \eqref{eqn:polyAnsatz} exist in the transverse magnetic setting if $\omega=\omega_R+\ri\omega_I$ is an eigenvalue as described above, the points $n\omega_R+\ri\nu\omega_I$ lie in the resolvent set for all $\nu \in \N\setminus\{1\}$ and all $n\in \Z, |n|\leq \nu$, and if the resolvent operator is bounded in the $H^1$ norm on each side of the interface with an operator norm that decays fast enough in $\nu$. We provide an example of two physically relevant homogeneous media (i.e. $\chi_\pm^{(j)}$ independent of $x,y,z$) for which these assumptions are satisfied.

The motivation for studying polychromatic solutions is that the nonlinear system \eqref{eqn:MaxwellEq}, \eqref{eqn:Displacement}, \eqref{E:P} does not allow monochromatic solutions. Indeed, the monochromatic ansatz
\beq\label{E:monochrom}
\left(\mathcal{E},\ \mathcal{H}\right)(x,y,t)=\left(E,\ H\right)(x)e^{\mathrm{i}(ky-\omega t)}+\left(\overline{E},\ \overline{H}\right)(x)e^{-\mathrm{i}(ky-\overline{\omega} t)}
\eeq
produces in \eqref{eqn:Displacement}
\beq\label{E:D-single-harm}
\begin{aligned}
	\cP^{(1)}_j(x,y,t) = &~\perm_0\sum_{p=1}^3\hat{\chi}^{(1)}_{j,p}(x,\omega)E_p(x)e^{\mathrm{i}(ky-\omega_R t)} e^{\omega_I t}+ \text{c.c.},\\
	\cP^{(2)}_j(x,y,t) = &~ \perm_0\sum_{p,q=1}^3\left(\hat{\chi}^{(2)}_{j,p,q}(x,-\overline{\omega},\omega)\overline{E}_p(x)E_q(x)+\hat{\chi}^{(2)}(x,\omega,-\overline{\omega})E_p(x)\overline{E}_q(x)\right)e^{2\omega_I t} \\
	&+ \perm_0\sum_{p,q=1}^3\hat{\chi}^{(2)}_{j,p,q}(x,\omega,\omega)E_p(x)E_q(x)~e^{2\mathrm{i}(ky-\omega_R t)}e^{2\omega_I t} + \text{c.c.}
\end{aligned}
\eeq
and an analogous expression for $\cP^{(3)}_j(x,y,t)$, where
\beq\label{E:FT-L1}
\hat{f}(\omega_1,\dots,\omega_n):=\int_{\R^n} f(t_1,\dots,t_n)e^{\ri(\omega_1 t_1+\dots+\omega_nt_n)}\dd (t_1,\dots,t_n) \qquad \forall (\omega_1,\dots,\omega_n)\in \C^n
\eeq
denotes the \textit{temporal Fourier-Laplace transform} of $f\in L^1(\R^n)$. For the case of $\chi^{(n)}(x)$ being distributions of compact support the form \eqref{E:D-single-harm} is explained in Appendix \ref{S:distrib}. The definition of the Fourier transform for such distributions appears in \eqref{E:FT-distrib-comp}.

Clearly, the nonlinear terms $\cP^{(2)}$ and $\cP^{(3)}$ are proportional to $e^{2\omega_I t}$ and $e^{3\omega_I t}$, respectively, and contain oscillatory functions $e^{\ri n\omega_R t}$ with $n\in\{-2,0,2\}$ and $n\in\{-3,-1,1,3\}$, resp. These terms are not compensated for by the linear terms in the equation. In end effect, this forces one to include all decay rates $\nu\omega_I$ with $\nu \in \N$ and for each rate $\nu$ all harmonics $n\omega_R$ with $n \in \Z, |n|\leq \nu$ in the solution ansatz. Note that our ansatz \eqref{eqn:polyAnsatz} contains all harmonics $n\in \Z$. In Theorem \ref{T:main} we prove the extistence of solutions with $E^{n,\nu}=H^{n,\nu}=0$ if $|n|>\nu$.

The presence of complex frequencies with an arbitrarily large negative imaginary part is the reason why we choose the susceptibility to have compact support (in time). Then the Fourier-Laplace transform at these frequency values is well defined using the Paley-Wiener theorem (see \eqref{E:FT-distrib-comp}).

Clearly, the time delay (memory effect) in the polarization translates to frequency dependence of the $\hat{\chi}^{(j)}$. Such media are called dispersive; or media with a retarded material law.

The case $\omega \in \R$ (i.e. $\omega_I=0$) has been used with nonlinearities defined via time averages over a temporal period  \cite{Stuart_93,sutherland2003handbook}. For this case there is no temporal decay of the solution, no higher harmonics are generated and monochromatic solutions are possible. Existence of such solutions has been shown by bifurcation methods in \cite{DH2024} and by variational methods in a series of papers with various settings regarding the geometry, the coefficients, and the nonlinearity, see \cite{Benci2004}, \cite{BARTSCH20174304}, \cite{Hirsch_2019}, \cite{Mandel2022}   as well as the overview article \cite{BM2017}. In \cite{HH2016} the temporal evolution of nonlinear surface plasmons was studied for the pure cubic (Kerr) nonlinearity via asymptotic amplitude equations for the coupling of the right moving and left moving modes, both at the main carrier frequency.	Without the averaging in time, the nonlinearity generates higher harmonics and there are no monochromatic solution of Maxwell equations. In some works, see e.g. \cite{BS2001,Brahms:23,DR2021,DR22-corr,DH2024}, where the time-delayed or instantaneous Kerr nonlinearity (without averaging) is studied, higher harmonics are neglected. Then, families of monochromatic solutions \eqref{E:monochrom}  are constructed. In \cite{DR2021,DR22-corr,DH2024} solutions with both $\omega \in \C$ and $\omega\in \R$ are found in dispersive media. Real frequencies are shown to exist in $PT-$symmetric media, where loss and gain is balanced. In the current paper $PT-$symmetry is not assumed. Also, due to the possible presence of interfaces in the waveguide, the inversion symmetry of the material is generally broken and a Kerr nonlinearity would be unphysical which is the reason for including also quadratic terms. It is our aim to construct solutions of nonlinear Maxwell equations with complex frequencies without neglecting higher harmonics with the polarization $\cP$ as in \eqref{E:P}, i.e., without averaging in time.

Polychromatic spatially localized solutions (also called breathers) in wave propagation have been constructed analytically before, but only in self-adjoint nonlinear problems with frequencies given by multiples of a \textit{real} $\omega\in\R$. For the spatial localization one needs a non-resonance condition, i.e., the multiples of $\omega$ must lie outside the spectrum, which itself is real. In \cite{Blank2011}, \cite{Pelinovsky2012}, and \cite{Hirsch_2019} scalar problems in one dimension were studied and a very specific choice of spatially periodic coefficients was used to
achieve the highly non-generic situation, where all multiples of $\omega$ lie in spectral gaps. In a recent series of papers  \cite{OR2024,Ohrem_Reichel_2025,Ohrem2025} the authors study the existence of time periodic traveling breathers of Maxwell equations with time retarded polarizations using a variational mountain pass argument. They consider a TE polarized ansatz leading to a scalar equation. Also in these works the frequencies are real and the non-resonance condition is achieved either by a special choice of periodic coefficients or of the geometry and the propagation speed.

In our non-self-adjoint setting it is expected that the non-resonance condition is satisfied in most cases. This is because the points $n\omega_R +\nu \ri \omega _I \in \C, \nu \in\{2,3,\dots\}, n\in\Z, |n|\leq \nu$ generically avoid the spectrum which typically occupies a small part of the complex plane. In that sense the non-self-adjoint nature of the problem is beneficial for constructing polychromatic solutions.

We start in Section \ref{Sec_TM} by introducing a transverse magnetic reduction of the nonlinear Maxwell equations. Then in Section  \ref{Sec_lin_prob} we reformulate the system \eqref{eqn:MaxwellEq}, \eqref{eqn:Displacement}, \eqref{E:P} for the ansatz in \eqref{eqn:polyAnsatz} as a series of linear ODEs. In Section \ref{sec.spectrum} we introduce the notion of the spectrum of an operator pencil. Next, in Section \ref{S:main-result} we present our main results on the existence of polychromatic localized breathers (nonlinear surface plasmons). Theorem \ref{T:main} is in the general setting of the susceptibilities being causal compactly supported distributions and Theorem \ref{T:Lor-example} is for a specific example of an interface of two physically relevant homogeneous materials. Theorem \ref{T:main} is then proved in Section \ref{Sec_main_res_proof}.  
The remaining sections deal with the physical example covered by Theorem \ref{T:Lor-example}. After introducing the linear and non-linear susceptibility functions and discussing the spectrum of the corresponding operator in Section \ref{sec:spec} we prove in Section \ref{sec:proof-Thm-2} the necessary resolvent estimates, which are assumed in Theorem \ref{T:main}. In Section \ref{S:numerics} we present a numerical example for a specific material interface. We finish with a discussion of our results in Section \ref{Sec_diss}.
Appendix \ref{S:distrib} reviews some standard facts on the distributions of compact support in relation to the Fourier transform and the convolution with smooth functions. Appendix \ref{app:Drude} concerns the Drude model for a dispersive material and describes the reason why it is not suitable for our finite memory setting. Finally, Appendix \ref{app:num} describes the numerical method used to compute the polychromatic solutions.

\section{Problem Setting and Main Results}

	\subsection{Transverse Magnetic Reduction} \label{Sec_TM}

	Motivated by the fact that in the case of two homogeneous \textit{linear} media ($\chi^{(2,3)}=0$, $\chi^{(1)}_\pm$ independent of $x$) localized monochromatic solutions (surface plasmons)  exist only in the transverse magnetic polarization, see \cite{Pitarke_2007,BDPW25}, we will restrict ourselves to transverse magnetic (TM) fields, i.e.,  
	$$\cE=(\cE_1,\cE_2,0)^\top \ \text{ and } \ \cH=(0,0,\cH_3)^\top \ \text{ with } \ (\cE,\cH)=(\cE,\cH)(x,y,t).$$
	Inserting the TM ansatz into equation \eqref{eqn:MaxwellEq} and balancing terms leads to the following compatibility condition
	\beq\label{E:P3-zero}
	\cE_3 =0 \quad \Rightarrow \quad (\cP^{(1)}_3=0, \cP^{(2)}_3=0, \cP^{(3)}_3=0).
	\eeq
	This is a condition on the tensors $\chi^{(j)}$. Clearly, \eqref{E:P3-zero} is equivalent to
	\beq\label{E:chi-ass-TM}
	\chi^{(1)}_{3,1}=\chi^{(1)}_{3,2}=0, \ \chi^{(2)}_{3,p,q}=0  \quad \text{if} \quad  p,q\in \{1,2\}, \quad  \text{and} \   \chi^{(3)}_{3,p,q,r}=0  \quad  \text{if} \quad  p,q,r\in \{1,2\}.
	\eeq
	In Sec. 1.5 of \cite{boyd2008} one can see that a number of crystal types satisfies \eqref{E:chi-ass-TM}. Examples are orthorombic crystals of type $mmm$, tetragonal crystals of types $4/m$ and $4/mmm$ or cubic crystals of types $m3$ and $m3m$.
	
	Under condition \eqref{E:chi-ass-TM} we rewrite Maxwell equations as a system for the three nonzero components of $\cE$ and $\cH$. We define 
	$$\psi(x,y,t):=(\tfrac{1}{\boldsymbol{\mu}_0}\cE_1(x,y,t),\tfrac{1}{\boldsymbol{\mu}_0}\cE_2(x,y,t),\cH_3(x,y,t))^\top.$$
	The Maxwell system \eqref{eqn:MaxwellEq} reduces to 
	\beq\label{E:maxw-TM}
	\begin{pmatrix}
		0\ & 0 \ &-\pa_y \\
		0\ & 0 \ & \partial_{x} \\
		-\pa_y\ & \partial_x \ &0
	\end{pmatrix}\psi+\begin{pmatrix}
		\pa_t\cD_1\\ \pa_t\cD_2\\ \pa_t\psi_3
	\end{pmatrix}=0, \quad \pa_x\cD_1+\pa_y\cD_2=0.
	\eeq
	The divergence condition $\nabla\cdot \cH=0$ is satisfied trivially as $\cH_3$ is independent of $z$.

	\subsection{Reformulation as a Sequence of Linear ODEs} \label{Sec_lin_prob}
	
	Due to the TM reduction we have 
	$E^{n,\nu}_3=0$ and  $H^{n,\nu}_1=H^{n,\nu}_2=0$ in the ansatz \eqref{eqn:polyAnsatz}. We define $u^{n,\nu} = (u^{n,\nu}_1, u^{n,\nu}_2, u^{n,\nu}_3)^\top$, where
	\begin{equation}
		\label{eqn:TMcondition}
		\begin{split}
			u^{n,\nu}_1(x):=\frac{1}{\boldsymbol{\mu}_0}E_1^{n,\nu}(x),\quad u^{n,\nu}_2(x):=\frac{1}{\boldsymbol{\mu}_0}E_2^{n,\nu}(x),\quad u^{n,\nu}_3(x):=H_3^{n,\nu}(x),
		\end{split}
	\end{equation}
	as well as the vectors $u^{n,\nu}_E:= (u^{n,\nu}_1,u^{n,\nu}_2, 0)^\top$ and $u^{n,\nu}_H:= (0,0, u^{n,\nu}_3)^\top$. Further, we define for any complex number $\omega\in\C$ and $(n,\nu )\in \Z\times\N$ the mixed multiple 
	$$\omega^{(n,\nu )}:=n\Real(\omega) +\ri\nu\Imag (\omega).$$
	With that, we fix $\omega_0:=\omega_R+\ri\omega_I$ with $\omega_R,\omega_I\in\R$ and and get $\omega_0^{(n,\nu)}= n \omega_R + \ri \nu \omega_I$. We write
	$$ \psi (x,y,t)= \sum_{n\in\Z} e^{\ri n(ky-\omega_Rt)} \varphi^n (x,t), \quad \text{where} \quad \varphi^n(x,t):=\sum\limits_{\nu\in\N} e^{\nu\omega_It}u^{n,\nu}(x).$$ Next we want to use \eqref{E:maxw-TM} to deduce equations for the functions $u^{n,\nu}, (n,\nu)\in\Z\times\N$. For that we need 
	to calculate the polarizations $\cP^{(1)}$, $\cP^{(2)}$ and $\cP^{(3)}$ in terms of 
	$$\vec{u}:=(u^{n,\nu})_{(n,\nu)\in\Z\times\N}.$$ 
	To this end let $p,q,r\in \{1,2\}$. Then 
	\begin{align}
		\cE_p(x,y,t_1)\cE_q(x,y,t_2)&=\boldsymbol{\mu}_0^2 \psi_p(x,y,t_1)\psi_q(x,y,t_2) \notag\\
		&=\boldsymbol{\mu}_0^2 \sum_{n\in\Z} e^{\ri nky} \sum_{m\in\Z} e^{-\ri\omega_R(mt_1 + (n-m)t_2)} \varphi_p^{m}(x,t_1)\varphi_q^{n-m}(x,t_2), \label{eqn:convolutionP2}
	\end{align}
	and similarly 
	\beq \label{eqn:convolutionP3}
	\begin{aligned}
		\cE_p(x,y,t_1)&\cE_q(x,y,t_2)\cE_r(x,y,t_3) = \\
		&\boldsymbol{\mu}_0^3 \sum_{n\in\Z} e^{\ri nky} \sum_{m\in\Z}\sum_{l\in\Z} e^{-\ri\omega_R(mt_1 +lt_2+ (n-m-l)t_3)} \varphi_p^{m}(x,t_1)\varphi_q^{l}(x,t_2)\varphi_r^{n-m-l}(x,t_3).
	\end{aligned}
	\eeq 
	The product of the $\varphi$'s, each a series itself, can be calculated again as a Cauchy product, namely
	\begin{equation}
		\label{eqn:innerConvolutionP2}
		\varphi_q^{l}(x,t_2)\varphi_r^{n-m-l}(x,t_3)=\sum\limits_{\nu=2}^\infty \sum\limits_{\lambda=1}^{\nu-1} e^{\omega_I (\lambda t_2 + (\nu - \lambda)t_3)}u_q^{l,\lambda}(x) u_r^{n-m-l,\nu-\lambda}(x),
	\end{equation}
	where the outer sum starts at $\nu=2$, because the inner one is empty for $\nu=1$. Similarly, we obtain
	\begin{equation}
		\label{eqn:innerConvolutionP3}
		\varphi_p^{m}(x,t_1)\varphi_q^{l}(x,t_2) \varphi_r^{n-m-l}(x,t_3)= \sum\limits_{\nu=3}^\infty \sum\limits_{\mu=1}^{\nu -2} \sum\limits_{\lambda=1}^{\nu -\mu -1} e^{\omega_I (\mu t_1 + \lambda t_2 + (\nu-\mu-\lambda )t_3)} u_p^{m,\mu}(x)u_q^{l,\lambda}(x)u_r^{n-m-l,\nu-\mu-\lambda}(x).
	\end{equation}
	
	Combining \eqref{eqn:convolutionP2} with \eqref{eqn:innerConvolutionP2} and \eqref{eqn:convolutionP3} with \eqref{eqn:innerConvolutionP3}, respectively, we arrive at the expressions 
	\begin{equation}
		\label{eqn:Esquared}
		\cE_p(x,y,t_1)\cE_q(x,y,t_2)= \boldsymbol{\mu}_0^2 \sum_{n\in\Z}\sum_{\nu=2}^\infty e^{\ri nky} \sum_{m\in\Z}\sum_{\mu =1}^{\nu-1} e^{-\ri  t_1 \omega_0^{(m,\mu)} -\ri t_2 \omega_0^{(n-m,\nu-\mu)}} u_p^{m,\mu}(x)u_q^{n-m,\nu-\mu}(x)
	\end{equation}
	and 
	\begin{equation}
		\label{eqn:Ecubed}
		\begin{aligned}
			&\cE_p(x,y,t_1)\cE_q(x,y,t_2)\cE_r(x,y,t_3)\\
			&= \boldsymbol{\mu}_0^3 \sum\limits_{n\in \Z}\sum\limits_{\nu =3}^\infty e^{\ri nky} \sum\limits_{l,m\in \Z} \sum\limits_{\mu=1}^{\nu -2} \sum\limits_{\lambda=1 }^{\nu -\mu -1} e^{-\ri  t_1\omega_0^{(m,\mu)} - \ri t_2\omega_0^{(l,\lambda)} - \ri t_3\omega_0^{(n-m-l,\nu-\mu-\lambda)}} u_p^{m,\mu}(x)u_q^{l,\lambda}(x)u_r^{n-m-l,\nu-\mu-\lambda}(x),
		\end{aligned}
	\end{equation}
	where the order of summation was changed. This is justified as our construction will generate an absolutely summable sequence $\vec{u}$.
	
	The components of the polarizations $\cP^{(1)}$, $\cP^{(2)}$, and $\cP^{(3)}$ are then constructed as the sum of convolutions of the susceptibilities with the terms given by \eqref{eqn:Esquared} and \eqref{eqn:Ecubed}. Similar to \eqref{E:D-single-harm}, such a convolution takes the form

	\begin{equation}
		\label{eqn:convolutions}
		\begin{aligned}
			&\left(\chi^{(1)}_{j,p}(x)\ast_\R \cE_p(x,y,\cdot)\right)(t)= \boldsymbol{\mu}_0\sum_{n\in\Z}\sum_{\nu\in\N} e^{\ri nky - \ri t\omega_0^{(n,\nu)}} \hat{\chi}^{(1)}_{j,p}(x,\omega_0^{(n,\nu)})u_p^{n,\nu}(x),\\
			&\left( \chi^{(2)}_{j,p,q}(x)\ast_{\R^2}(\cE_p(x,y,\cdot)\cE_q(x,y,\cdot))\right)(t,t)\\
			& \qquad=\boldsymbol{\mu}_0^2 \sum_{n\in\Z}\sum_{\nu=2}^\infty e^{\ri nky -\ri t \omega_0^{(n,\nu)}}\sum_{m\in\Z}\sum_{\mu =1}^{\nu-1} \hat{\chi}^{(2)}_{j,p,q}(x,\omega_0^{(m,\mu)},\omega_0^{(n-m,\nu-\mu)}) u_p^{m,\mu}(x)u_q^{n-m,\nu-\mu}(x),\\
			&\left( \chi^{(3)}_{j,p,q,r}(x)\ast_{\R^3}(\cE_p(x,y,\cdot)\cE_q(x,y,\cdot)\cE_r(x,y,\cdot))\right)(t,t,t)\\
			&\qquad =\boldsymbol{\mu}_0^3 \sum\limits_{n\in \Z}\sum\limits_{\nu =3}^\infty e^{\ri nky-\ri t\omega_0^{(n,\nu)}} \sum\limits_{l,m\in \Z} \sum\limits_{\mu=1}^{\nu -2} \sum\limits_{\lambda=1 }^{\nu -\mu -1} \hat{\chi}^{(3)}_{j,p,q,r}(x,\omega_0^{(m,\mu)},\omega_0^{(l,\lambda)},\omega_0^{(n-m-l,\nu-\mu-\lambda)}) \\
			&\qquad\qquad\qquad\qquad\qquad \times u_p^{m,\mu}(x)u_q^{l,\lambda}(x)u_r^{n-m-l,\nu-\mu-\lambda}(x).		
		\end{aligned}	
	\end{equation}
	
	With that, we finally arrive at a formulation of the $\cD$-field as 
	\begin{equation}
		\label{eqn:DField}
		\begin{aligned} 
			\cD_j(x,y,t)&= \sum_{n\in\Z}\sum_{\nu\in\N} e^{\ri nky -\ri t\omega_0^{(n,\nu)}} \perm_0\boldsymbol{\mu}_0\left( u^{n,\nu}_j + \sum_{p=1}^2 \hat{\chi}^{(1)}_{j,p}(x,\omega_0^{(n,\nu)})u^{n,\nu}_p(x)\right) \\
			&+ \sum_{n\in\Z}\sum_{\nu=2}^\infty e^{\ri nky - \ri t\omega_0^{(n,\nu)}} Q^{n,\nu}_j(x,\omega_0,\vec{u}) + \sum_{n\in\Z} \sum_{\nu=3}^\infty e^{\ri nky-\ri t\omega_0^{(n,\nu)}} C^{n,\nu}_j(x,\omega_0,\vec{u}), \quad j=1,2, 	
		\end{aligned} 
	\end{equation}
	where we define for $j=1,2$ the quadratic coefficients for $n\in\Z$, $\nu\in\N\setminus\{1\}$ as 
	$$Q^{n,\nu}_j(x,\omega,\vec{u}):=\perm_0\boldsymbol{\mu}_0^2\sum_{m\in\Z}\sum_{\mu=1}^{\nu-1}\sum_{p,q=1}^2\hat{\chi}^{(2)}_{j,p,q}(x,\omega^{(m,\mu)},\omega^{(n-m,\nu -\mu)})u^{m,\mu}_p(x) u^{n-m,\nu-\mu}_q(x)$$
	and the cubic coefficients for $n\in\Z$ and $\nu\in\N\setminus\{1,2\}$
	$$\begin{aligned} 
		C^{n,\nu}_j(x,\omega,\vec{u}):=\perm_0\boldsymbol{\mu}_0^3\sum_{l,m\in\Z}\sum_{\mu=1}^{\nu-2}\sum_{\lambda=1}^{\nu-\mu-1}\sum_{p,q,r=1}^2 &\hat{\chi}^{(3)}_{j,p,q,r}(x,\omega^{(m,\mu)},\omega^{(l,\lambda)},\omega^{(n-m-l,\nu -\mu-\lambda)})\\
		&\times u^{m,\mu}_p(x) u^{l,\lambda}_q(x)u^{n-m-l,\nu-\mu-\lambda}_r(x).
	\end{aligned}$$
	Note that the sum in $p,q$ and in $p,q,r$ is only over $\{1,2\}$ instead of $\{1,2,3\}$ because $\cE_3 =0$ in the TM polarization.
	
	Substituting expression \eqref{eqn:DField} for the $\cD$-field in \eqref{E:maxw-TM}, we compare coefficients of the same frequency oscillations and temporal decay to arrive for every $(n,\nu)\in\Z\times\N$ at the equation 
	
	\begin{equation}
		\label{eqn:operatorFormulation}
		\cL_{nk}(\omega_0^{(n,\nu)})u^{n,\nu} (x)= h^{n,\nu}(x,\omega_0,\vec{u} ),~~x\in\R,
	\end{equation}
	where for any $\omega \in \C$ and $k \in \R$ the linear operators are given by
	\begin{equation}\label{E:op-def}
		\begin{aligned}
			\cL_{k}(\omega)&:=A_{k}-B(x,\omega),\\
			A_{k}&:=\begin{pmatrix}
				0\ & 0 \ &k \\
				0\ & 0 \ &\ri \partial_{x} \\
				k\ &\ri \partial_x \ &0
			\end{pmatrix}=\bspm -\ri & 0 & 0\\ 0 & -\ri & 0 \\ 0 & 0 & \ri\espm \nabla_{k}\times,\quad \nabla_{k}=\left(\partial_x,\ \mathrm{i}k,\ 0\right)^\top, \qquad \qquad \\
			B(x,\omega)&:=-\omega\begin{pmatrix}
				\boldsymbol{\mu}_0\perm_0(1+\hat{\chi}^{(1)}_{1,1}(x,\omega))\ &\boldsymbol{\mu}_0\perm_0\hat{\chi}^{(1)}_{1,2}(x,\omega) \ &0\\
				\boldsymbol{\mu}_0\perm_0\hat{\chi}^{(1)}_{2,1}(x,\omega)\ & \boldsymbol{\mu}_0 \perm_0(1+\hat{\chi}^{(1)}_{2,2}(x,\omega)) &0\\
				0\ &0\ & 1
			\end{pmatrix},
		\end{aligned}
	\end{equation}
	and the nonlinearity is
	\begin{equation}
		\label{eqn:nonlinearity}
		\begin{aligned}
			h^{n,\nu}_j(x,\omega ,\vec{u}):=& -\omega^{(n,\nu)}\left( Q^{n,\nu}_j(x,\omega,\vec{u}) + C^{n,\nu}_j(x,\omega,\vec{u})\right),\ \forall n\in\Z ,\nu\geq 3 \text{ and } j=1,2,\\
			h^{n,2}_j(x,\omega,\vec{u}):=&-\omega^{(n,2)} Q_j^{n,2}(x,\omega,\vec{u}),\ \forall n\in\Z \text{ and } j=1,2,\\
			h^{n,1}(x,\omega,\vec{u}):=&~0,\ \forall\ n\in \Z,\\
			h^{n,\nu}_3(x,\omega,\vec{u}):=&~0,\  \forall \ (n,\nu)\in\Z\times\N .
		\end{aligned}
	\end{equation}
	For brevity we suppress the $x$-dependence in the notation for $\cL_k$. The vanishing of the nonlinearity at $\nu =1$ reflects that the summation with respect to $\nu$ in the nonlinear terms of \eqref{eqn:DField} starts at $\nu=2$, while $h^{n,\nu}_3=0$ is due to the TM-polarization.
	We finally introduce for $n,m,l\in\Z$, $\nu,\mu,\lambda\in\N$, $j,p,q,r\in\{1,2\}$, $x \in \R$, and $\omega \in \C$ the notation 
	\begin{equation}
		\label{E:betaAndGam}
		\begin{aligned}
			\beta^{n,m,\nu,\mu}_{j,p,q}(x,\omega)&:=-\omega^{(n,\nu)}\perm_0\boldsymbol{\mu}_0^2\hat{\chi}^{(2)}_{j,p,q}(x,\omega^{(m,\mu)},\omega^{(n-m,\nu-\mu)}),\\
			\gamma^{n,m,l,\nu,\mu,\lambda}_{j,p,q,r}(x,\omega)&:=-\omega^{(n,\nu)}\perm_0\boldsymbol{\mu}_0^3\hat{\chi}^{(3)}_{j,p,q,r}(x,\omega^{(m,\mu)},\omega^{(l,\lambda)},\omega^{(n-m-l,\nu-\mu-\lambda)})
		\end{aligned}
	\end{equation}
	for the coefficients of the quadratic and cubic nonlinearity terms, respectively. With that, the nonlinearity for $\nu\geq 2$ presents itself as 
	\begin{equation} 
		\label{eqn:hClosedForm}
		\begin{aligned} 
			h^{n,\nu}_j(x,\omega,\vec{u})=&\sum_{m\in\Z}\sum\limits_{\mu=1}^{\nu-1}\sum_{p,q=1}^2 \beta^{n,m,\nu,\mu}_{j,p,q}(x,\omega)u_p^{m,\mu}(x)u_q^{n-m,\nu-\mu}(x) \\
			&+ \sum_{l,m\in\Z}\sum_{\mu=1}^{\nu-2}\sum_{\lambda=1}^{\nu-\mu-1}\sum_{p,q,r=1}^2 \gamma^{n,m,l,\nu,\mu,\lambda}_{j,p,q,r}(x,\omega) u^{m,\mu}_p(x) u^{l,\lambda}_q(x)u^{n-m-l,\nu-\mu-\lambda}_r(x),
		\end{aligned}
	\end{equation}
	where for $\nu=2$ only the first sum is non-zero.
	
	If $\hat{\chi}^{(1)}_{j,p}(\cdot ,\omega) \in L^\infty(\R)$ for all $j,p\in\{1,2\}$, then the natural domain of the operator $\cL_{k}(\omega)$ for every $k\in\R$ is
	\beq\label{E:DL}
	D_{\cL}:=\left\{u\in L^2\left(\mathbb{R},\mathbb{C}^3\right):\quad \nabla_k\times\ u\in L^2\left(\mathbb{R}, \mathbb{C}^3\right)\right\} =\left\{u\in L^2\left(\mathbb{R},\mathbb{C}^3\right):\quad u_2,\ u_3\in H^1\left(\mathbb{R}, \mathbb{C}\right)\right\}
	\eeq	
	and the operator maps to $L^2(\R,\C^3)$. Hence, from a spectral analytic point of view we understand $\cL_{k}$ as
	$$\cL_{k}:D_{\cL}\subset L^2(\R,\C^3)\to L^2(\R,\C^3).$$
	Note that if $u^{n,\nu}\in D_\cL$ for each $(n,\nu)\in\Z\times\N$, then we recover $E^{n,\nu}=(u^{n,\nu}_1, u^{n,\nu}_2, 0)^\top \in L^2(\R)\times H^1(\R)\times \{0\}, H^{n,\nu}=(0,0,u^{n,\nu}_3)^\top \in \{0\}\times \{0\}\times H^1(\R)$ and, after checking convergence of the $(n,\nu)$-series, also the conditions on $\cE$ and $\cH$ in \eqref{E:fn-sp-Maxw} are satisfied. We will show in the proof of Theorem \ref{T:main} that, similarly to monochromatic solutions, the divergence condition in  \eqref{E:fn-sp-Maxw} is satisfied automatically by construction also in the polychromatic case. 
	
	For the nonlinear analysis the function space $D_\cL$ is too large as it is not closed under multiplication. Hence, our working space will be the intersection of $D_\cL$ with $\cH^1$, i.e., with the space of functions which are in $H^1$ (with respect to $x$) on $\R_+$ as well as on $\R_-$, see Sec. \ref{S:main-result}.
	
	Clearly, because in \eqref{eqn:hClosedForm} the expression $h^{n,\nu}$ depends only on terms $u^{m,\mu}$ with some $m\in\Z$ and $\mu <\nu$, \eqref{eqn:operatorFormulation} is a linear equation for $u^{n,\nu}$ for each $(n,\nu)\in\Z\times\N$. Hence, the vector $\vec{u}$, and thus also the 
	solution of the nonlinear Maxwell equations, is constructed  iteratively via a sequence of \textit{linear} problems, i.e., by solving first \eqref{eqn:operatorFormulation} for $\nu=1$ and all $n\in\Z$, then using the $u^{n,1}$ constructed this way to solve \eqref{eqn:operatorFormulation}
	for $\nu=2$ and all $n\in\Z$ and so on.
	
	We close this subsection by another crucial investigation, namely that for fixed $\nu\in\N$, equation \eqref{eqn:operatorFormulation} only needs to be solved for finitely many values of $n\in\N$ if we use a frequency $\omega_0\in\C$ that is an eigenvalue of the operator pencil $\cL_k$.
	The spectrum of this operator pencil is discussed in more detail in Section \ref{sec.spectrum} but we call $\omega_0\in\C$ an eigenvalue of $\cL_k$, if the equation $\cL_k(\omega_0)\varphi_0=0$ admits a nontrivial solution $\varphi_0\in D_{\cL}$ which we call an eigenfunction. 
	Using that $h^{n,1}\equiv 0$ by \eqref{eqn:nonlinearity}, for $\nu=1$ equation \eqref{eqn:operatorFormulation} becomes 
	$$\cL_k(\omega_0^{(n,1)})u^{n,1}=0$$
	for $n\in\Z$. Because $\omega_0^{(1,1)}=\omega_0$ we can thus choose $u^{1,1}=\varphi_0$, while for $n=-1$ we use the fact, that $\cL_{-nk}(\omega_0^{(-n,1)})=-\overline{\cL_{nk}(\omega_0^{(n,1)})}$ (see the proof of Theorem \ref{T:main}) such that $u^{-1,1}=\overline{\varphi_0}$ can be chosen there. In the case $|n|\neq 1$ we can simply choose $u^{n,1}=0$. Hence, only one equation needs to be solved for $\nu=1$.
	In the next step, consider $\nu=2$ and the nonlinearity 
	\begin{align*} 
		h^{n,2}_j(x,\omega_0,\vec{u})&= \sum_{m\in\Z}\sum\limits_{\mu=1}^{1}\sum_{p,q=1}^2 \beta^{n,m,2,\mu}_{j,p,q}(x,\omega_0)u_p^{m,\mu}(x)u_q^{n-m,2-\mu}(x)\\
		&=\sum_{m=-1}^1 \sum_{p,q=1}^2 \beta^{n,m,2,1}_{j,p,q}(x,\omega_0)u_p^{m,1}(x)u_q^{n-m,1}(x)
	\end{align*}
	because $u_p^{m,1}=0$ for $|m|>1$. However, since also $u_q^{n-m,1}=0$ for $|n-m|>1$, we arrive at 
	$$h^{n,2}_j(x,\omega_0,\vec{u})=0,~~\text{ for all }|n|>2,$$
	as then all summands in the above sum vanish. Consequently, we can choose $u^{n,2}=0$ as the solution to $\cL_{2k}(\omega_0^{(n,2)})u^{n,2}=0$ for $|n|>2$ and only need to solve 
	the linear ODE problem at the five values $n=-2,-1,0,1,2$ if $\nu=2$.
	
	Inductively one can similarly show (see again the proof of Theorem \ref{T:main}) that $h^{n,\nu}(x,\omega_0,\vec{u})=0$ whenever $|n|>\nu$ and thus $u^{n,\nu}=0$ can be chosen for each such pair $(n,\nu)\in\Z\times\N$.
	This motivates the definition of the sets 
	\begin{equation}
		\label{eqn:defCone}
		\begin{aligned}
			\mathbf{I}&:=\{ (n,\nu) \in \Z\times \N: |n|\leq \nu\},\\
			\mathbf{S}&:=\{\omega_0^{(n,\nu)}\in\C:\ (n,\nu)\in \mathbf{I}\},
		\end{aligned}
	\end{equation}
	where $\mathbf{I}$ is the set of all indices, in which the ODE \eqref{eqn:operatorFormulation} actually needs to be solved and $\mathbf{S}$ is the corresponding discrete cone in the complex plane, see Figure \ref{fig:cone}.
	
	\begin{figure}[h]
		\centering
		\begin{tikzpicture}
			\draw[->] (0,0) -- (10,0) node [below] {$\Real (\omega )$};
			\draw[->] (5,-3) -- (5,1) node [right] {$\Imag (\omega )$}; 
			\foreach \y in {1,...,5} {
				\foreach \x in {-\y ,...,\y}{
					\draw[fill=black] (5-0.65*\x,-0.5*\y) circle (0.1cm);
				}
			}
			\draw[fill=red] (5.65,-0.5) circle (0.1cm) node [above right] {$\omega_0$};
		\end{tikzpicture}
		\caption{Schematic of the set $\mathbf{S}$ of mixed integer multiples $\omega_0^{(n,\nu)}$ for $\nu\leq 5$.}
		\label{fig:cone}
	\end{figure}
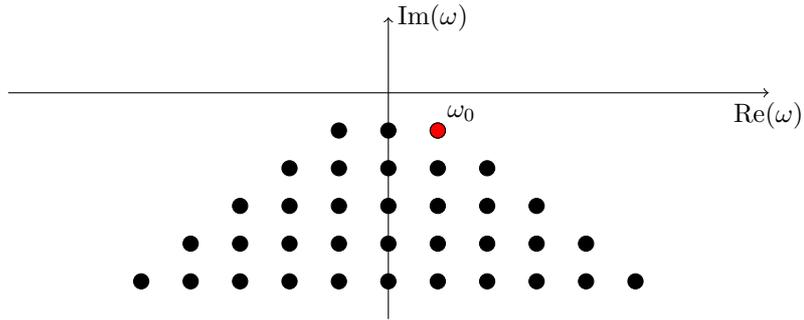
	
	\subsection{Spectrum of the Operator Pencil}\label{sec.spectrum}
	Note that $\cL_{nk}=A_{nk}-B$ is an operator pencil parameterized by our spectral parameter $\omega$.
	We define the spectrum of $\cL_{nk}$ in line with the definitions from  \cite{DH2024} by using an additional parameter $\lambda$ and considering for each $\omega \in \C$ fixed the standard eigenvalue problem
	\begin{equation}\label{sec_Lpb}
		\cL_{nk}(\omega)u=\lambda u.
	\end{equation}
	Whether $\omega$ belongs to the spectrum (or its subset) of $\cL_{nk}$ is defined below by the condition that $\lambda=0$ belongs to the corresponding set for $\cL_{nk}(\omega)$ with $\omega$ fixed.
	
	We first define the \textbf{resolvent set}  of  the pencil $\cL_{nk}$  by
	$$\rho(\cL_{nk}):=\{\omega\in \C: \cL_{nk}(\omega):D_{\cL}\to L^2(\R, \C^3) \text{ is bijective with a bounded inverse}\},$$
	where $D_\cL$ was defined in \eqref{E:DL}. The \textbf{spectrum} of $\cL_{nk}$  is
	\beq\label{E:Pspec}
	\sigma(\cL_{nk}):=\C\setminus \rho(\cL_{nk}).
	\eeq
	Note that $\sigma(\cL_{nk})=\{\omega\in \C: 0\in \sigma(\cL_{nk}(\omega))\}$.
	
	The \textbf{point spectrum} is defined by
	$$\sigma_p(\cL_{nk}):=\{\omega\in \C: \exists u \in D_\cL\setminus \{0\}: \cL_{nk}(\omega)u=0 \}.$$
	Elements of $\sigma_p(\cL_{nk})$ are called eigenvalues of $\cL_{nk}$.
	
	For completeness, we mention that the \textbf{discrete spectrum} is defined via $
	\omega \in \sigma_d(\cL_{nk})  \ :\Leftrightarrow \ 0\in \sigma_d(\cL_{nk}(\omega)),$ 
	i.e., $\lambda=0$ is an isolated eigenvalue of finite algebraic multiplicity of the standard eigenvalue problem \eqref{sec_Lpb} (with $\omega\in \C$ fixed). Also, the \textbf{essential spectrum}  $\sigma_{\mathrm{ess}}(\cL_{nk})$ is defined by $\sigma_{\mathrm{ess}}(\cL_{nk})=\sigma(\cL_{nk})\setminus \sigma_d(\cL_{nk})$.

	Note that in \cite{BDPW25} the spectrum of the Maxwell operator pencil (in the second order formulation) for a related problem was defined using an auxiliary parameter $\lambda$ multiplying the operator $B$ rather than the identity $I$. This is due to the fact that in \cite{BDPW25} the divergence condition $\nabla \cdot D =0 $ was included in the definition of the domain of the pencil, which makes this choice of $\lambda$ necessary when defining the discrete spectrum.

	\subsection{Main Results}
	\label{S:main-result}
	As mentioned above, the domain $D_\cL$ is not suitable as the working space for the construction of solutions to the nonlinear problem since it does not satisfy the algebra property. We define first the piece-wise $H^1$ space
	$$
	\begin{aligned}
		\cH^1&:=\{f\in L^2(\R,\C^3): f|_{\R_+}\in H^1(\R_+,\C^3), f|_{\R_-}\in H^1(\R_-,\C^3)\},\\
		\|f\|_{\cH^1}&:=\sum_\pm \left(\sum_{j=1}^3 \|f_j\|^2_{H^1(\R_\pm)}\right)^{1/2},
	\end{aligned}
	$$
	and note that both $H^1(\R_\pm)$ are Banach algebras in the sense that
	\beq \label{eqn:algebra}
	\Vert f\cdot g \Vert_{H^1(\R_\pm)}\leq c_A\Vert f\Vert_{H^1(\R_\pm)}\Vert g\Vert_{H^1(\R_\pm)} \quad \forall f,g\in H^1(\R_\pm,\C^3),
	\eeq
	with some constant $c_A>0$, see Thm. 4.39 in \cite{AF2003}. For vector valued functions $f\in H^1(\R_\pm,\C^m)$ we define $$\|f\|_{H^1(\R_\pm)}:=\left(\sum_{j=1}^m\|f_j\|_{H^1(\R_\pm)}^2\right)^{1/2}.$$
	
	Our working space for the construction of the solution sequence $(u^{n,\nu})$ is 
	$$D_\cL\cap \cH^1.$$
	
	Our main general result holds under the following assumptions. Recall that $\cD_c'(\R^n)$ denotes the space of distributions with compact support (see Appendix \ref{S:distrib} for a detailed definition).
	\begin{enumerate}[label=(A\arabic*),ref=(A\arabic*)]
		\item \label{ass:A-cpctSupport}
		$\chi^{(1)}_{\pm,j,p}(x) \in\cD'_c(\R)$, $\chi^{(2)}_{\pm,j,p,q}(x)\in \cD'_c(\R^2)$, and $\chi^{(3)}_{\pm,j,p,q,r}(x)\in \cD'_c(\R^3)$ for all $x\in \R$, and $j,p,q,r\in\{1,2\}$. There are $T,T_N>0$ such that
		$$\supp \left(\chi^{(1)}_\pm(x) \right) \subset [0,T],\quad \supp \left( \chi^{(2)}_\pm(x)\right)\subset [0,T_N]^2 ,\quad \supp \left( \chi^{(3)}_\pm(x)\right)\subset [0,T_N]^3 ~~\forall x\in \R .$$
		
		\item \label{ass:A-TM}
		$\chi^{(1)}_{3,1}=\chi^{(1)}_{3,2}=0, \ \chi^{(2)}_{3,p,q}=0  \ \text{if} \ p,q\in \{1,2\}, \ \text{and} \   \chi^{(3)}_{3,p,q,r}=0  \ \text{if} \ p,q,r\in \{1,2\}.$
		\item \label{ass:A-cont}
		$\chi^{(1)}_{\pm,j,p}\in C^1(\R_\pm,\cD'_c(\R))$, $\chi^{(2)}_{\pm,j,p,q}\in C^1(\R_\pm ,\cD'_c(\R^2))$, and $\chi^{(3)}_{\pm,j,p,q,r}\in C^1(\R_\pm ,\cD'_c(\R^3))$ for all $j,p,q,r\in\{1,2\}.$ 
		
		\item \label{ass:A-periodic}
		$\chi^{(1)}_\pm$, $\chi^{(2)}_\pm$, and $\chi^{(3)}_\pm$ are eventually periodic, i.e., there exist $L>0$ and $p^{(1)}_\pm, p^{(2)}_\pm, p^{(3)}_\pm>0$ such that
		$$\chi^{(j)}_+(x+p^{(j)}_+)= \chi^{(j)}_+(x), \quad  \text{for all} \quad j\in \{1,2,3\}, x>L$$ and
		$$\chi^{(j)}_-(x-p^{(j)}_-)= \chi^{(j)}_-(x), \quad  \text{for all} \quad j\in \{1,2,3\}, x<-L.$$
		\item \label{ass:A-eval}
		$\omega_0=\omega_R+\ri\omega_I\neq 0 $ is an eigenvalue of the operator pencil $\cL_k$, i.e., $\cL_k(\omega_0)\varphi_0=0$,  where $\varphi_0\in D_{\cL}\setminus \{0\}$.
		\item \label{ass:A-resolvent}
		$\omega_0^{(n,\nu )}=n\omega_R+\ri\nu\omega_I \in \rho(\cL_{nk})$ for all  $(n,\nu) \in \mathbf{I}\setminus \{(1,1),(-1,1)\}$.
		\item \label{ass:A-resolvEst}
		For any $(n,\nu)\in\mathbf{I}\setminus \{(1,1),(-1,1)\}$ and $r=(r_1,r_2,r_3)^\top\in \cH^1$ the equation
		\beq\label{E:res-eq}
		\cL_{nk}(\omega_0^{(n,\nu)} )u=r
		\eeq
		admits a unique solution $u=(u_1,u_2,u_3)^\top\in D_\cL\cap \cH^1$ fulfilling
		$$\Vert (u_1,u_2) \Vert_{H^1(\R_\pm)}\leq f_1^-(\nu ) \Vert (r_1,r_2)\Vert_{H^1(\R_-)} + f_1^+(\nu ) \Vert (r_1,r_2)\Vert_{H^1(\R_+)} +f_3(\nu)  \Vert r_3\Vert_{H^1(\R_+ )+H^1(\R_- )}$$
		and
		$$\Vert u_3 \Vert_{H^1(\R_\pm)} \leq f_2^-(\nu ) \Vert (r_1,r_2)\Vert_{H^1(\R_-)} + f_2^+(\nu ) \Vert (r_1,r_2)\Vert_{H^1(\R_+)} +f_3(\nu)  \Vert r_3\Vert_{H^1(\R_+ )+H^1(\R_- )},$$
		where $f_1^\pm,f_2^\pm,f_3:\N\to \R_+$. If $\chi^{(2)}_+\neq 0$ or $\chi^{(3)}_+\neq 0$, then the functions $f_{1,2}^+$  satisfy the following conditions
		\begin{itemize}
			\item there exists $M>0$ such that 
			$$f_1^+(\nu )\leq M\nu^{-5}e^{-\sqrt{3}\nu |\omega_I|T_N} \ \text{ for all } \ \nu\in\N,$$
			\item $f_2^+$ grows at most exponentially, i.e., there is $\alpha\in\R$ such that 
			$$f_2^+(\nu )\leq Me^{\alpha\nu} \ \text{ for all } \ \nu\in\N.$$
		\end{itemize}
		If $\chi^{(2)}_-\neq 0$ or $\chi^{(3)}_-\neq 0$, then the functions $f_{1,2}^-$  satisfy the same conditions. No assumption on $f_3:\N\to \R_+$ is needed.
	\end{enumerate}
	\brem
	See Appendix \ref{S:distrib} for the definition of the support of a distribution used in assumption \ref{ass:A-cpctSupport}. Note that this assumption implies that the polarization model is causal and has a finite memory in time. 
	
	One observes in \ref{ass:A-resolvEst} drastically different assumptions on the estimates of the first two components and the third component of the solution of \eqref{E:res-eq}. This is due to the fact that in the nonlinear problem \eqref{eqn:operatorFormulation} only the equations for $\vec{u}_1$ and $\vec{u}_2$ are nonlinear and the nonlinearity does not depend on $\vec{u}_3$. Hence, in the first two components the decay rate of the resolvent must compensate for the potentially exponential growth of the Fourier transforms $\hat{\chi}^{(2)}$ and $\hat{\chi}^{(3)}$ in the variables $\nu$ and $T_N$, see Lemma \ref{lem:gamma_est}. The additional factor $\nu^{-5}$ in the resolvent estimate compensates for the number of terms in the sum with respect to $m,l,\lambda$, and $\mu$ in $h^{n,\nu}_j$ for $j=1,2$, see \eqref{eqn:estimate_r}.
	\erem

	The following is our general existence result for a polychromatic solution of the cubically nonlinear Maxwell equations \eqref{eqn:MaxwellEq}--\eqref{E:P}.
	\begin{theorem}\label{T:main}
		For a fixed $k\in\R$, assume \ref{ass:A-cpctSupport}--\ref{ass:A-resolvEst}, and let $\eps>0$ be arbitrary. Then there exists a sequence $\vec{u}:=(u^{n,\nu})_{(n,\nu)\in \Z\times\N} \subset D_\cL\cap \cH^1$ with
		$$u^{1,1} = \eps \varphi_0, \quad u^{n,\nu} =0 \ \text{ if }\ |n|>\nu, \quad u^{-n,\nu}=\overline{u^{n,\nu}} \quad \forall\ (n,\nu)\in \Z\times\N,$$
		satisfying equation \eqref{eqn:operatorFormulation} for each $(n,\nu)\in\Z\times\N$ and for some $c,\widetilde{M}>0$ the estimate
		\beq\label{E:un-nu-est}
		\|u^{n,\nu}\|_{\cH^1}\leq c\widetilde{M}^{\nu}\eps^\nu  \quad  \forall\ (n,\nu) \in \Z\times \N
		\eeq
		holds.

		Let
		\beq\label{E:Maxw-sol}
		\psi(x,y,t):=\sum_{\nu \in \N}\sum_{n\in \Z}u^{n,\nu}(x)e^{-\ri n(\omega_R t -ky)}e^{\nu \omega_I t}.
		\eeq
		If $\omega_I\leq 0$, then for all $\eps< e^{\omega_I T_\mathrm{max}}\widetilde{M}^{-1}$ the series \eqref{E:Maxw-sol} converges for every $t\in [-T_{\max},\infty )$ where $T_{\max}:=\max\{T,T_N\}$. Furthermore, the electromagnetic field $\cE:=(\boldsymbol{\mu}_0\psi_1,\boldsymbol{\mu}_0\psi_2,0)^\top, \cH:=(0,0,\psi_3)^\top$
		is a solution of the Maxwell equations \eqref{eqn:MaxwellEq}--\eqref{E:P} on the time interval $[0,\infty )$, satisfying the regularity  $\cE, \cH \in C^\infty([-T_\mathrm{max},\infty),C_\text{per}^\infty(\R,\cH^1))$ with the period $[0,\tfrac{2\pi}{|k|}]$. Moreover, $\cE_2(\cdot,y,t),\cH_3(\cdot ,y,t) \in H^1(\R)$ for all $(y,t)\in \R\times [-T_\mathrm{max},\infty)$. 
	\end{theorem}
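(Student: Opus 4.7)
The plan is to construct the sequence $\vec u$ by induction on $\nu$, exploiting that $h^{n,\nu}$ in \eqref{eqn:hClosedForm} depends only on components $u^{m,\mu}$ with $\mu<\nu$, so that \eqref{eqn:operatorFormulation} is a \emph{linear} equation for $u^{n,\nu}$ at each step. For $\nu=1$, assumption \ref{ass:A-eval} supplies $\varphi_0$; I set $u^{1,1}:=\eps\varphi_0$, use the identity $\cL_{-k}(-\overline{\omega_0})=-\overline{\cL_k(\omega_0)}$ (a direct consequence of \eqref{E:op-def} and of the reality of $\chi^{(1)}$ in time) to set $u^{-1,1}:=\eps\overline{\varphi_0}$, and $u^{n,1}:=0$ for $|n|\neq 1$, which is consistent with $h^{n,1}\equiv 0$. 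For $\nu\geq 2$, a triangle inequality on the index constraints in \eqref{eqn:hClosedForm}, combined with the inductive vanishing $u^{m,\mu}=0$ for $|m|>\mu$, gives $h^{n,\nu}\equiv 0$ whenever $|n|>\nu$, so $u^{n,\nu}:=0$ is valid for these indices; for $(n,\nu)\in\mathbf I\setminus\{(\pm 1,1)\}$, assumption \ref{ass:A-resolvent} and the existence part of \ref{ass:A-resolvEst} yield a unique $u^{n,\nu}\in D_\cL\cap\cH^1$ solving \eqref{eqn:operatorFormulation}. The conjugation symmetry $u^{-n,\nu}=\overline{u^{n,\nu}}$ is propagated at each step via $\overline{\beta^{n,m,\nu,\mu}(x,\omega_0)}=\beta^{-n,-m,\nu,\mu}(x,\omega_0)$ and the analogous identity for $\gamma$ in \eqref{E:betaAndGam}, both following from reality of the susceptibilities in time.

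The core of the proof is a parallel induction for \eqref{E:un-nu-est}. Under the hypothesis $\|u^{m,\mu}\|_{\cH^1}\leq c\widetilde M^{\mu}\eps^{\mu}$ for all $\mu<\nu$, the Banach algebra inequality \eqref{eqn:algebra} bounds each triple product appearing in $h^{n,\nu}_j$ ($j=1,2$) by $c_A^2 c^3 \widetilde M^{\nu}\eps^{\nu}$ in $H^1(\R_\pm)$. The Paley--Wiener theorem (Appendix \ref{S:distrib}), applied to $\hat\chi^{(2,3)}_\pm$ supported in $[0,T_N]^{2,3}$ and evaluated at arguments whose imaginary parts sum in modulus to $\nu|\omega_I|$, yields bounds of the form $C(1+|\omega_0|\nu)^N e^{\sqrt{3}\nu|\omega_I|T_N}$ on the coefficients $\beta,\gamma$ (the $\sqrt{3}$ reflects that $[0,T_N]^3$ is inscribed in a ball of radius $\sqrt{3}T_N$). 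Since the sums in \eqref{eqn:hClosedForm} contain $O(\nu^4)$ terms, I obtain
\beqq
\|h^{n,\nu}_j\|_{H^1(\R_\pm)} \leq C_1\nu^5 e^{\sqrt{3}\nu|\omega_I|T_N} c^3 \widetilde M^\nu\eps^\nu, \qquad j=1,2,
\eeqq
on any side where $\chi^{(2)}_\pm$ or $\chi^{(3)}_\pm$ is nontrivial. Inserting this into \ref{ass:A-resolvEst} and using $f_1^\pm(\nu)\leq M\nu^{-5}e^{-\sqrt{3}\nu|\omega_I|T_N}$ cancels the exponential and polynomial factors exactly, yielding $\|(u_1^{n,\nu},u_2^{n,\nu})\|_{H^1(\R_\pm)}\leq C_2 c^3\widetilde M^\nu\eps^\nu$. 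The third component only admits $\|u_3^{n,\nu}\|_{H^1(\R_\pm)}\leq C_3\,\nu^5 e^{(\alpha+\sqrt{3}|\omega_I|T_N)\nu} c^3\widetilde M^\nu\eps^\nu$; this is harmless because, by the TM reduction, $h^{n,\nu}_j$ for $j=1,2$ involves only the components of $\vec u$ with indices in $\{1,2\}$, so $u_3$ never feeds back into the nonlinearity. Fixing $c$ in the base case via $c\widetilde M\geq\|\varphi_0\|_{\cH^1}$ and enlarging $\widetilde M$ to absorb $\nu^5 e^{(\alpha+\sqrt{3}|\omega_I|T_N)\nu}$ into a geometric base closes the induction.

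For the assembly, when $\omega_I\leq 0$ and $t\geq -T_{\max}$ one has $e^{\nu\omega_I t}\leq e^{\nu|\omega_I|T_{\max}}$, so \eqref{E:un-nu-est} gives
\beqq
\sum_{\nu\in\N}\sum_{|n|\leq\nu}\|u^{n,\nu}\|_{\cH^1}e^{\nu\omega_I t} \leq \sum_{\nu\in\N}(2\nu+1)\,c\,(\widetilde M\eps e^{|\omega_I|T_{\max}})^\nu,
\eeqq
which converges whenever $\eps<\widetilde M^{-1}e^{\omega_I T_{\max}}$, uniformly in $y$ and locally uniformly in $t$. Smoothness in $t$ follows by term-by-term differentiation, since each $\partial_t$ pulls down a factor $\omega_0^{(n,\nu)}=O(\nu)$ that is absorbed by a marginal shrinking of $\eps$; smoothness and $\frac{2\pi}{|k|}$-periodicity in $y$ are built into the ansatz, and $\cE_2(\cdot,y,t),\cH_3(\cdot,y,t)\in H^1(\R)$ is immediate since $u^{n,\nu}_{2,3}\in H^1(\R)$ by definition of $D_\cL$. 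The Maxwell equations \eqref{eqn:MaxwellEq} hold mode by mode because \eqref{eqn:operatorFormulation} was derived by substituting \eqref{eqn:polyAnsatz} into \eqref{E:maxw-TM}; $\nabla\cdot\cH=0$ is automatic by the TM form; and $\nabla\cdot\cD=0$ follows from $\partial_t(\nabla\cdot\cD)=0$ (the first two Maxwell equations in \eqref{eqn:MaxwellEq}) combined with the fact that every Fourier coefficient of $\partial_t(\nabla\cdot\cD)$ carries a nonzero prefactor $-\ri\omega_0^{(n,\nu)}$ (recall $\omega_0\neq 0$), forcing the spatial divergence to vanish in each mode.

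The main obstacle is the tight bookkeeping in the second step: the $O(\nu^4)$-fold combinatorial explosion of the sums in \eqref{eqn:hClosedForm} together with the Paley--Wiener growth $e^{\sqrt{3}\nu|\omega_I|T_N}$ of the nonlinear coefficients must be balanced \emph{exactly} by the decay of $f_1^\pm$ in \ref{ass:A-resolvEst}; the much weaker admissible growth rate of $f_2^\pm$ is tolerable only because of the structural decoupling $h_3^{n,\nu}\equiv 0$ inherent to the TM reduction, which must be respected throughout the iteration.
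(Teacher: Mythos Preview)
Your overall strategy matches the paper's, but the induction as you state it does not close. You propose to carry the hypothesis $\|u^{m,\mu}\|_{\cH^1}\leq c\widetilde M^{\mu}\eps^{\mu}$ on the \emph{full} norm. Feeding this into the resolvent estimate of \ref{ass:A-resolvEst} gives, as you write, $\|u_3^{n,\nu}\|_{H^1(\R_\pm)}\leq C_3\,\nu^5 e^{(\alpha+\sqrt{3}|\omega_I|T_N)\nu}(c^2+c^3)\widetilde M^{\nu}\eps^{\nu}$. To recover $\|u^{n,\nu}\|_{\cH^1}\leq c\widetilde M^{\nu}\eps^{\nu}$ you would then need $C_3(c+c^2)\,\nu^5 e^{(\alpha+\sqrt{3}|\omega_I|T_N)\nu}\leq 1$ for \emph{all} $\nu$, which is impossible for any fixed $c,\widetilde M$. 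Enlarging $\widetilde M$ does not help here: the right-hand side of the inductive claim already contains $\widetilde M^{\nu}$, so the growing factor cannot be absorbed. The paper resolves this by running the induction solely on the $E$-part, i.e.\ on $\|u_E^{m,\mu}\|_{\cH^1}=\|(u_1^{m,\mu},u_2^{m,\mu},0)\|_{\cH^1}\leq c_u^{\mu-1}\eps^{\mu}$. This loop closes cleanly because $h^{n,\nu}_{1,2}$ depends only on $E$-components and $f_1^\pm$ has exactly the compensating decay. Only \emph{after} the induction is complete does one estimate $u_3^{n,\nu}$ once, non-inductively, using the already-established $E$-bounds and $f_2^\pm$; the resulting exponential factor is then absorbed into the \emph{final} constant $\widetilde M$ of \eqref{E:un-nu-est}, which is not the inductive constant. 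Your remark that ``$u_3$ never feeds back'' is precisely the structural fact that makes this two-stage scheme work, but you must set up the induction to exploit it.

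Two smaller points. First, your conjugation identity has a sign: $\overline{\beta^{n,m,\nu,\mu}_{j,p,q}(x,\omega_0)}=-\beta^{-n,-m,\nu,\mu}_{j,p,q}(x,\omega_0)$ (and likewise for $\gamma$), because $\overline{\omega_0^{(n,\nu)}}=-\omega_0^{(-n,\nu)}$; this is harmless since the same sign appears in $\cL_{-nk}(\omega_0^{(-n,\nu)})=-\overline{\cL_{nk}(\omega_0^{(n,\nu)})}$ and $h^{-n,\nu}=-\overline{h^{n,\nu}}$. Second, for smoothness in $t$ no ``marginal shrinking of $\eps$'' is needed: for fixed $\eps<e^{\omega_I T_{\max}}\widetilde M^{-1}$ the series $\sum_\nu \nu^{m+1}(e^{\omega_I t}\widetilde M\eps)^\nu$ converges for every $m$, so each $\partial_t^m\psi$ converges in $\cH^1$ for the same $\eps$. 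Finally, your divergence argument via $\partial_t(\nabla\cdot\cD)=0$ is valid but requires $\omega_0^{(n,\nu)}\neq 0$ for all $(n,\nu)\in\mathbf I$, which does not follow from $\omega_0\neq 0$ alone (take $n=0$, $\omega_I=0$); it does follow from \ref{ass:A-resolvent} since $0\notin\rho(\cL_0)$. The paper instead checks $\nabla_{nk}\cdot(B u_E^{n,\nu}+h^{n,\nu})=\nabla_{nk}\cdot A_{nk}u^{n,\nu}=-\ri\,\nabla_{nk}\cdot(\nabla_{nk}\times u^{n,\nu})=0$ directly in each mode, which avoids this issue.
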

	\bigskip
	\begin{rem}
		The regularity $\cE, \cH \in C^\infty([-T_\mathrm{max},\infty),C_\text{per}^\infty(\R,\cH^1))$ means that the map $t\mapsto (\cE, \cH)(\cdot,\cdot,t)$ is in $C^\infty([-T_\mathrm{max},\infty))$, and for each $t\in [-T_\mathrm{max},\infty)$ the map $y\mapsto (\cE,\cH)(\cdot,y,t) \in \cH^1$ is in $C^\infty(\R)$ and periodic with period $[0,\tfrac{2\pi}{|k|}]$. Due to the additional $H^1(\R)$ regularity of $\cE_2(\cdot,y,t)$ and $\cH_3(\cdot ,y,t)$ we see that the constructed Maxwell solution satisfies the natural spatial regularity in \eqref{E:fn-sp-Maxw}.
	\end{rem}
	\begin{rem}\label{R:chihat-well-defined}
		Note that assumption \ref{ass:A-cpctSupport} implies that the Fourier transforms $\hat{\chi}^{(1)}_\pm(\cdot, \omega), \hat{\chi}^{(2)}_\pm(\cdot ,\omega_1,\omega_2), \hat{\chi}^{(3)}_\pm(\cdot,\omega_1,\omega_2,\omega_3)$ are defined (as functions) for all $\omega,\omega_1,\omega_2,\omega_3\in \C$; in particular for all $\omega,\omega_1,\omega_2,\omega_3\in \mathbf{S}$. This follows by the theorem of Paley-Wiener, see Appendix \ref{S:distrib}. Hence, the evaluation $\hat{\chi}_\pm^{(1)}(\cdot,\omega)$ with $\omega \in \C$ makes sense.
	\end{rem}
	\begin{rem}\label{R:chihat-cont}
		Theorem \ref{T:main} holds also under the weaker regularity $\chi_\pm^{(j)}\in W^{1,\infty}(\R_\pm,\cD_c'(\R^j))$ for all $j=1,2,3$. However, in order for the model to only feature one interface (at $x=0$), we assume $\chi_\pm^{(j)}\in C(\R_\pm,\cD'_c(\R^j))$ and the differentiability of these mappings (with respect to $x$), which directly implies $\hat{\chi}^{(j)}_\pm(\cdot,\omega )\in C^1(\R_\pm ,\C)$ for all $\omega\in\C^j$, see Appendix \ref{S:distrib}. 
        The boundedness of the mappings $\hat{\chi}^{(j)}(\cdot,\omega )$ and $\partial_x \hat{\chi}^{(j)}(\cdot,\omega )$ is then a direct consequence of assumption \ref{ass:A-periodic}.
	\end{rem}
	
	\begin{rem} The discussion in the introduction about the advantage of non-self-adjoint problems relates to the  non-resonance condition \ref{ass:A-resolvent}. As the spectrum of $\cL_{nk}$ occupies only a small subset of $\C$, this assumption is typically easily satisfied. 
	\end{rem}
	\begin{rem}
		Theorem \ref{T:main} can be directly generalized to finitely many interfaces, i.e., at $x=x_j, j=1,\dots, m$ with $m\in \N$ and $x_j <x_{j+1}$ for all $j=1,\dots m-1$. The space $\cH^1$ then needs to be replaced by $\{u\in L^2(\R,\C^3):u|_{(x_j,x_{j+1})}\in H^1((x_j,x_{j+1}),\C^3), \ \forall\ j\in \{1,\dots,m-1\}\}$ and the assumptions need to be adapted accordingly.
	\end{rem}
	
	\medskip
	
	Next we apply Theorem \ref{T:main} to a concrete case of physical materials. In detail, we study the interface of two homogeneous layers, at $x<0$ and $x>0$. One of the layers is a nonlinear dispersive material described by the Lorentz model with a finite memory and the other layer is a linear non-dispersive material. The finiteness of the memory of the Lorentz model is achieved by a simple truncation of the support of the time dependent Lorentz susceptibility. The following theorem shows that under certain conditions on the spectrum of the interface problem with the untruncated Lorentz model and one condition on the eigenvalue $\omega_0$ of the interface with the truncated Lorentz model the assumptions \ref{ass:A-cpctSupport}--\ref{ass:A-resolvEst}  are satisfied for large enough truncation parameters $T$.
	\bthm\label{T:Lor-example}
	Fix  $k\in \R$ and consider the interface problem with $\chi^{(1)}$ given by 
	$$
	\chi^{(1)}(x,t):=\begin{cases}
		\frac{c_L}{c_*} e^{-\gamma t}\sin\left(t\sqrt{\omega_*^2-\gamma^2}\right)\mathbbm{1}_{[0,T]}(t)~I_{3\times 3}, & x<0,\\
		\alpha \delta~I_{3\times 3}, & x>0,
	\end{cases}
	$$
	where $\delta \in \cD'(\R)$ is the Dirac distribution and $c_*=\sqrt{\omega_*^2-\gamma^2}$ with $c_L,\alpha>0, \omega_*>\gamma>0$. Also let
	$$
	\chi^{(2)}(x,t_1,t_2):=\begin{cases} \chi^{(2)}_{T_N}(t_1,t_2), & x<0,\\
		0, & x>0,
	\end{cases} 
	\qquad 
	\chi^{(3)}(x,t_1,t_2,t_3):=\begin{cases} \chi^{(3)}_{T_N}(t_1,t_2,t_3), & x<0,\\
		0, & x>0
	\end{cases}
	$$
	with $\chi^{(2,3)}_{T_N}$ defined in \eqref{E:chi23_trunc} and $0<T_N< \frac{1}{2\sqrt{3}}T$. If assumptions \ref{ass:B-signOfOmega}--\ref{ass:B-rationality}  (see Section \ref{sec:proof-Thm-2}) are satisfied, then a polychromatic solution of \eqref{eqn:MaxwellEq}--\eqref{E:P} of the form given by Theorem \ref{T:main} exists.	
	\ethm

	\section{Proof of Theorem \ref{T:main}} \label{Sec_main_res_proof}

	We first prove an estimate for the coefficients $\beta^{n,m,\nu,\mu}_{j,p,q}$ and $\gamma^{n,m,l,\nu,\mu,\lambda}_{j,p,q,r}$ of the nonlinearity defined in \eqref{E:betaAndGam} that is based on the Paley-Wiener Theorem, see for example Theorem 7.23 in \cite{rudinFA}.
	
	\begin{lemma}\label{lem:gamma_est}
		Let $\omega_0=\omega_R+\ri\omega_I\in\C$ and assume \ref{ass:A-cpctSupport}, \ref{ass:A-cont}, and \ref{ass:A-periodic}. Then there are constants $c_\beta^\pm>0$ and $c_\gamma^\pm>0$ such that
		\beq \label{eqn:estimate-Gamma}
		\begin{aligned} 
			\Vert \beta^{n,m,\nu,\mu}_{j,p,q}(\cdot,\omega_0)\Vert_{W^{1,\infty}\left(\R_\pm\right)} &\leq c_\beta^\pm \nu e^{\sqrt{2}T_N|\omega_I|\nu},\\
			\Vert \gamma^{n,m,l,\nu,\mu ,\lambda}_{j,p,q,r}(\cdot ,\omega_0)\Vert_{W^{1,\infty }\left(\R_{\pm}\right)}&\leq c_\gamma^\pm\nu e^{\sqrt{3}T_N|\omega_I|\nu},
		\end{aligned}
		\eeq
		for all $j,p,q,r\in\{1,2\}$ and $(n,\nu ),(m,\mu ),(l,\lambda )\in\mathbf{I}$ with $\mu<\nu$ and $\mu+\lambda <\nu$.
	\end{lemma}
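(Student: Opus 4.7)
The estimate rests on the Paley--Wiener--Schwartz theorem, applied pointwise in $x$ to the compactly supported distributions $\chi^{(2)}_\pm(x,\cdot)$ and $\chi^{(3)}_\pm(x,\cdot)$ (cf.\ Appendix \ref{S:distrib}). First I would extract the prefactor in \eqref{E:betaAndGam}: since $(n,\nu)\in\mathbf{I}$ forces $|n|\leq\nu$, one has
\[
|\omega_0^{(n,\nu)}|=|n\omega_R+\ri\nu\omega_I|\leq(|\omega_R|+|\omega_I|)\nu,
\]
which supplies the linear factor $\nu$ appearing in \eqref{eqn:estimate-Gamma}. What remains is a pointwise-in-$x$ exponential bound on $\hat\chi^{(2)}_\pm(x,\omega_0^{(m,\mu)},\omega_0^{(n-m,\nu-\mu)})$ (and analogously for $\hat\chi^{(3)}_\pm$).

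By \ref{ass:A-cpctSupport} the support of $\chi^{(2)}_\pm(x)$ lies in $[0,T_N]^2$, and hence in the closed Euclidean ball of radius $T_N\sqrt{2}$. The Paley--Wiener--Schwartz theorem then provides, for each $x$, an integer $N(x)\geq 0$ and $C(x)>0$ with
\[
|\hat\chi^{(2)}_\pm(x,\zeta_1,\zeta_2)|\leq C(x)(1+|\zeta|)^{N(x)}\exp\bigl(T_N\sqrt{2}\,|\Imag\zeta|\bigr).
\]
Substituting $\zeta=(\omega_0^{(m,\mu)},\omega_0^{(n-m,\nu-\mu)})$ gives $\Imag\zeta=\omega_I(\mu,\nu-\mu)$, and since $\mu,\nu-\mu\geq 0$ one has $\mu^2+(\nu-\mu)^2\leq(\mu+(\nu-\mu))^2=\nu^2$, so $|\Imag\zeta|\leq\nu|\omega_I|$. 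This produces precisely the target exponent $\sqrt{2}\,T_N|\omega_I|\nu$. The polynomial factor $(1+|\zeta|)^{N(x)}\lesssim\nu^{N(x)}$ is absorbed into the constant using the slack between the sharper supporting function $T_N\nu|\omega_I|$ of the orthotope $[0,T_N]^2$ and the ball bound $\sqrt{2}\,T_N\nu|\omega_I|$, together with $\nu^N\leq C_\eps e^{\eps\nu}$ for arbitrarily small $\eps>0$. The argument for $\gamma$ is identical with $\chi^{(3)}_\pm$, support $[0,T_N]^3$ contained in a ball of radius $T_N\sqrt{3}$, and the corresponding inequality $\mu^2+\lambda^2+(\nu-\mu-\lambda)^2\leq\nu^2$, valid because all three entries are nonnegative, which yields the $\sqrt{3}$ exponent.

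It remains to pass from these pointwise-in-$x$ bounds to the uniform $W^{1,\infty}(\R_\pm)$ estimate. For the $L^\infty$ part, assumptions \ref{ass:A-cont} and \ref{ass:A-periodic} make the map $x\mapsto\chi^{(j)}_\pm(x)$ continuous on a compact fundamental domain and periodic outside, so the Paley--Wiener data $N(x)$ and $C(x)$ admit a uniform bound in $x\in\R_\pm$. For the derivative part, Remark \ref{R:chihat-cont} identifies $\partial_x\hat\chi^{(j)}_\pm(x,\zeta)$ with the Fourier transform of $\partial_x\chi^{(j)}_\pm(x)$, which by \ref{ass:A-cont} inherits the same support properties and, by the same compactness argument applied to the $C^1$ map, uniform Paley--Wiener data; the identical exponential-times-polynomial bound for $\partial_x\hat\chi^{(j)}_\pm$ then follows. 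The main technical hurdle I expect is exactly this uniformity of the Paley--Wiener seminorms in $x\in\R_\pm$, since it requires turning the $C^1$ and eventual-periodicity hypotheses into bounds on distributional seminorms; once that is in place, the rest reduces to the supporting-function calculation sketched above.
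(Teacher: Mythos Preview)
Your proposal is correct and follows essentially the same Paley--Wiener route as the paper: bound $|\omega_0^{(n,\nu)}|\lesssim\nu$, apply Paley--Wiener to the compactly supported $\chi^{(2,3)}_\pm(x)$ with support in $[0,T_N]^d$, use that the imaginary parts of the frequency arguments sum to $\nu|\omega_I|$, and invoke \ref{ass:A-cont}/\ref{ass:A-periodic} for the $x$-uniformity and the derivative. The paper's version simply quotes the ball form of Paley--Wiener (Rudin 7.23) and writes $|\hat\chi^{(3)}_\pm(x,\vec\omega)|\leq c^\pm e^{\sqrt{3}T_N|\Imag\vec\omega|}$ without the polynomial factor $(1+|\vec\omega|)^N$; your use of the sharper orthotope supporting function $T_N\nu|\omega_I|$ to create slack and absorb that polynomial into the stated exponent $\sqrt d\,T_N|\omega_I|\nu$ is in fact the cleaner justification, and works whenever $\omega_I\neq 0$.
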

	
	\begin{proof}
		We prove the claim only for $\gamma^{n,m,l,\nu,\mu,\lambda}_{j,p,q,r}$ as the estimate for $\beta^{n,m,\nu,\mu}_{j,p,q}$ follows exactly the same way. For any $x\in\R_{\pm}$, the distributions $\chi^{(3)}_{\pm}(x)$ have support in $[0,T_N]^3$ by assumption \ref{ass:A-cpctSupport}. Hence, the Paley-Wiener theorem \cite{rudinFA} implies the existence of constants $c^\pm>0$ such that, for any $\vec{\omega} = (\omega_1,\omega_2,\omega_3)\in\C^3$,
		\beq \label{eqn:paley-wiener}
		\left| \hat{\chi}^{(3)}_{\pm ,j,p,q,r}(x,\vec{\omega} )\right| \leq c^\pm e^{\sqrt{3}T_N |\Imag (\vec{\omega})|} \leq c^\pm e^{\sqrt{3}T_N (|\Imag (\omega_1)| +|\Imag (\omega_2)| + |\Imag (\omega_3)|)} \qquad \forall x\in \R_\pm
		\eeq
		because $\mathrm{diam}\left([0,T_N]^3\right)=\sqrt{3}T_N$. The fact that the constants $c^\pm$ can be chosen independent of $x$ follows from assumption \ref{ass:A-periodic} and the continuity of $\chi^{(3)}$ from \ref{ass:A-cont}.
		With that, we estimate for any $x\in\R_\pm$,
		\begin{align*}
			|\gamma^{n,m,l,\nu,\mu,\lambda}_{j,p,q,r}(x,\omega_0)| &= \left|-\omega_0^{(n,\nu)}\perm_0\boldsymbol{\mu}_0^3 \hat{\chi}^{(3)}_{\pm ,j,p,q,r}\left(x,\omega_0^{(l,\lambda)},\omega_0^{(m,\mu)},\omega_0^{(n-m-l,\nu-\mu-\lambda)}\right)\right|\\
			&\leq c^\pm\perm_0\boldsymbol{\mu}_0^3(|n|+\nu )|\omega_0|e^{\sqrt{3}T_N|\omega_I|(\lambda + \mu +\nu -\mu -\lambda)}\\
			&\leq c_\gamma^\pm \nu e^{\sqrt{3}T_N|\omega_I|\nu},
		\end{align*}
		where we used that $\nu -\mu -\lambda >0$ and $|n|\leq\nu$ by assumption, and set $c_\gamma^\pm:=2|\omega_0|c^\pm\perm_0\boldsymbol{\mu}_0^3$.
		
		A completely analogous estimate follows for $|\pa_x\gamma^{n,m,l,\nu,\mu,\lambda}_{j,p,q,r}(x,\omega_0)|$. This is ensured by the differentiability in  \ref{ass:A-cont}. In particular, it implies that $\pa_x \chi^{(3)}_{\pm ,j,p,q,r}(x) \in \cD_c'(\R^3)$ with the same support as $\chi^{(3)}_{\pm ,j,p,q,r}(x)$.
	\end{proof}

    \begin{rem}
        Note that this lemma is the main reason for assumption \ref{ass:A-periodic} and there is no connection between the eventual spatial periodicity in $x$ of the material and the $y-$periodicity of the solution we construct.
    \end{rem}
	
	\medskip
	\begin{proof}[Proof of Theorem \ref{T:main}]
		~\\
		
		First, we recall again (as explained in Remark \ref{R:chihat-well-defined}) that $\beta^{n,m,\nu,\mu}_{j,p,q}(x,\omega_0)$, $\gamma^{n,m,l,\nu,\mu,\lambda}_{j,p,q,r}(x,\omega_0)$, and $\hat{\chi}^{(1)}_{j,p}(x,\omega_0^{(n,\nu)})$ are well defined functions of $x$ for all $(n,\nu)\in\mathbf{I}$ due to the compact support assumption \ref{ass:A-cpctSupport}.
		
		We start the construction of the sequence $\vec{u}$ with $\nu=1$. Clearly, the formulation \eqref{eqn:nonlinearity} of the nonlinearity $h^{n,\nu}$, in particular
		$h^{n,1}=0$ for all $n\in \Z$,  implies
		\beq\label{lin_eq_n=1}\cL_{nk}(\omega_0^{(n,1)})u^{n,1}=0 \quad \forall\ n \in \Z.\eeq
		For $|n|\neq 1$ we may choose $u^{n,1}=0$ as a trivial solution of \eqref{lin_eq_n=1}.
		Considering $n=1$, due to \ref{ass:A-eval}, it follows that \eqref{lin_eq_n=1} has a nontrivial solution
		$u^{1,1}=\eps \varphi_0\in D_{\cL}$, where $\eps >0$ is arbitrary.
	For any real valued distribution $\Lambda \in \cD_c'(\R^n)$ we have the symmetry \beq\label{conjugate}\hat{\Lambda}(-\bar{\omega})=\overline{\hat{\Lambda}(\omega)},\quad \forall \ \omega \in\C^n,\eeq
		as a simple calculation using the definition \eqref{E:FT-distrib-comp} shows. 
		Hence,we get $\cL_{-nk}(\omega_0^{(-n,1)})=-\overline{\cL_{nk}(\omega_0^{(n,1)})}$. So for $n=-1$ equation \eqref{lin_eq_n=1} has a nontrivial solution $u^{-1,1}$ given by
		$u^{-1,1} = \overline{u^{1,1}}=\eps\overline{\varphi_0}.$

		Next, we apply an induction argument to construct the components $u^{n,\nu}$ with $\nu>1$. Let $\nu \in \N$ and assume that
		\bi
		\item[(i)] $u^{m,\mu}=0$ for all $|m|>\mu$, $\mu \leq \nu$;
		\item[(ii)] $u^{m,\mu}\in D_\cL\cap \cH^1$ solves \eqref{eqn:operatorFormulation} for each $m \in \Z$ and $\mu \leq \nu$;
		\item[(iii)] $u^{-m,\mu} = \overline{u^{m,\mu}}$ for all $|m|\leq\mu, \mu \leq \nu$;
		\item[(iv)] $\|u_E^{m,\mu}\|_{\cH^1}\leq c_u^{\mu-1}\eps^\mu$ for all $|m|\leq \mu, \mu \leq \nu$, where $c_u>0$ is the constant
		\beq \label{eqn:cuDef} c_u:=\max_{\pm} 2^{5/2}c_A\left( c_\beta^\pm \frac M2 + \sqrt{(c_\beta^\pm)^2 M^2 +c_\gamma^\pm M}\right) \eeq 
		with the constants $c_A ,M$ and $c_\beta^\pm ,c_\gamma^\pm$ given by  \eqref{eqn:algebra}, \ref{ass:A-resolvEst} and \eqref{eqn:estimate-Gamma}, respectively.
		\ei
		We aim to show that there are $u^{n,\nu+1}$ with the same properties as in (i)-(iv) for all $n \in \Z$.
		The estimate of $\Vert u_H^{m,\mu}\Vert_{\cH^1}$ will then be shown directly without induction.
		
		Firstly, we prove that $u^{n,\nu+1}=0$ can be chosen for all $|n|>\nu+1$. From \eqref{eqn:hClosedForm} and the induction hypothesis (i) we get for any $j=1,2,3$
		\begin{align*}
			h^{n,\nu+1}_j(x,\omega,\vec{u})&=\sum_{m=-\nu}^\nu\sum\limits_{\mu=1}^{\nu}\sum_{p,q=1}^2 \beta^{n,m,\nu+1,\mu}_{j,p,q}(x,\omega)u_p^{m,\mu}(x)u_q^{n-m,\nu+1-\mu}(x) \\
			&\quad + \sum_{l=-\nu}^\nu\sum_{m=-\nu}^\nu\sum_{\mu=1}^{\nu-1}\sum_{\lambda=1}^{\nu-\mu}\sum_{p,q,r=1}^2 \gamma^{n,m,l,\nu+1,\mu,\lambda}_{j,p,q,r}(x,\omega) u^{m,\mu}_p(x) u^{l,\lambda}_q(x)u^{n-m-l,\nu+1-\mu-\lambda}_r(x)\\
			&=\sum\limits_{\mu=1}^{\nu}\sum_{m=-\mu}^\mu\sum_{p,q=1}^2 \beta^{n,m,\nu+1,\mu}_{j,p,q}(x,\omega)u_p^{m,\mu}(x)u_q^{n-m,\nu+1-\mu}(x) \\
			&\quad + \sum_{\mu=1}^{\nu-1}\sum_{\lambda=1}^{\nu-\mu}\sum_{l=-\lambda}^\lambda\sum_{m=-\mu}^\mu\sum_{p,q,r=1}^2 \gamma^{n,m,l,\nu+1,\mu,\lambda}_{j,p,q,r}(x,\omega) u^{m,\mu}_p(x) u^{l,\lambda}_q(x)u^{n-m-l,\nu+1-\mu-\lambda}_r(x),
		\end{align*}
		meaning, in particular, that $h$ includes only finite sums.
		If $n>\nu +1$, then also
		\begin{equation*}
			n-m> \nu+1 -\mu \quad \text{ and } \quad n-m-l > \nu +1 - \mu - \lambda
		\end{equation*}
		in the above two sums respectively. The induction hypothesis implies $u^{n-m,\nu-\mu}_q,u^{n-m-l,\nu +1-\mu -\lambda}_r\equiv 0$ for any such $n$ and all $m,l,\mu,\lambda$ appearing in the sums. Thus $h^{n,\nu +1}\equiv 0$ for all $n>\nu +1$, and a similar argument shows $h^{n,\nu +1}\equiv 0$ if $n<-\nu -1$.
		We may thus choose  $u^{n,\nu+1}=0$ as the trivial solution to 
		$$\cL_{nk}(\omega_0^{(n,\nu +1)})u^{n,\nu +1}=0$$ 
		for all $|n|>\nu+1$, irrespective of whether the point $\omega_0^{(n,\nu+1)}$ is in the resolvent set of $\cL_{nk}$ or not.
		
		Next, we consider $0\leq n \leq \nu +1$.
		Using the algebra property \eqref{eqn:algebra} of $H^1$, the fact that $\beta^{n,m,\nu,\mu}_{j,p,q}(\cdot,\omega_0)$, $\gamma^{n,m,l,\nu,\mu,\lambda}_{j,p,q,r}(\cdot ,\omega_0)$ are in $W^{1,\infty}(\R_\pm )$ on each half line $\R_\pm$ by Lemma \ref{lem:gamma_est}, and that $h^{n,\nu+1}$ depends only on $u^{j,\mu}$ with $\mu \leq \nu$, $j\in \Z$, where $u^{j,\mu}\in \cH^1$ by assumption (ii), we have
		$h^{n,\nu+1}\in \cH^1$ for all $n\in \Z$. Due to \ref{ass:A-resolvent}, there exists a unique solution $u^{n,\nu +1}\in D_\cL\cap \cH^1$  of
		$$\cL_{nk}(\omega_0^{(n,\nu+1)})u^{n,\nu +1}=h^{n,\nu +1},$$
		which shows (ii) with $\nu$ replaced by $\nu +1$ for $n\in\N_0$.
		
		Next we verify the symmetry condition (iii) and show (ii) (with $\nu$ replaced by $\nu+1$) for $n <0$. We define $u^{-n,\nu+1}:=\overline{u^{n,\nu+1}}$ and show that $u^{-n,\nu+1}$ solves \eqref{eqn:operatorFormulation}.
		First, we deduce from \eqref{conjugate} and the fact $\omega_0^{(-j,l)}=-\overline{\omega_0^{(j,l)}}$ that
		$$\beta^{-n,-m,\nu,\mu}_{j,p,q}(x,\omega_0)=-\omega^{(-n,\nu)}\perm_0\boldsymbol{\mu}_0^2\hat{\chi}^{(2)}_{j,p,q}(x,\omega^{(-m,\mu)}_0,\omega_0^{(-n+m,\nu-\mu)})= -\overline{\beta^{n,m,\nu,\mu}_{j,p,q}}(x,\omega_0)\quad \forall n,m,\nu,\mu,j,p,q,$$
		and similarly $$\gamma^{-n,-m,-l,\nu,\mu,\lambda}_{j,p,q,r}(x,\omega_0)=-\overline{\gamma^{n,m,l,\nu,\mu,\lambda}_{j,p,q,r}}(x,\omega_0),\quad \forall n,m,l,\nu,\mu,\lambda,j,p,q,r.$$
		Hence, by replacing the summation indices $l$ and $m$ in $h^{n,\nu+1}$ by $-l$ and $-m$, we have
		\begin{align*}
			h^{-n,\nu +1}(x,\omega_0,\vec{u})&=\sum\limits_{\mu=1}^{\nu}\sum_{m=-\mu}^\mu\sum_{p,q=1}^2 \beta^{-n,-m,\nu+1,\mu}_{j,p,q}(x,\omega_0)u_p^{-m,\mu}(x)u_q^{-n+m,\nu+1-\mu}(x) \\
			&\quad + \sum_{\mu=1}^{\nu-1}\sum_{\lambda=1}^{\nu-\mu}\sum_{l=-\lambda}^\lambda\sum_{m=-\mu}^\mu\sum_{p,q,r=1}^2 \gamma^{-n,-m,-l,\nu+1,\mu,\lambda}_{j,p,q,r}(x,\omega_0) u^{-m,\mu}_p(x) u^{-l,\lambda}_q(x)u^{-n+m+l,\nu+1-\mu-\lambda}_r(x)\\
			&=-\overline{h^{n,\nu +1}(x,\omega_0,\vec{u})},
		\end{align*}
		where the induction hypothesis (iii) has been used. Due to $\cL_{-nk}(\omega_0^{(-n,\nu)})=-\overline{\cL_{nk}(\omega_0^{(n,\nu)})}$ for all $(n,\nu)\in\mathbf{I}$, which is again a consequence of \eqref{conjugate} applied to $\hat{\chi}^{(1)}$, we conclude that $u^{-n,\nu+1}=\overline{u^{n,\nu+1}}$ is a solution to \eqref{eqn:operatorFormulation}. Hence, 
		(ii) and (iii) are shown for all $|m|\leq\nu+1$.
		
		It remains to show the $\cH^1$-estimate (iv) with $\nu$ replaced by $\nu +1$. We will employ \ref{ass:A-resolvEst} with $r:=h^{n,\nu+1}(\cdot ,\omega_0, \vec{u})$. Note that then $r_3=0$ and for $j=1,2$ we get
		\beq \label{eqn:form-of-r}
		\begin{aligned} 
			r_j=& \sum\limits_{\mu=1}^{\nu}\sum_{m=-\mu}^\mu\sum_{p,q=1}^2 \beta^{n,m,\nu+1,\mu}_{j,p,q}(\cdot,\omega_0)u_p^{m,\mu}u_q^{n-m,\nu+1-\mu} \\
			&\quad + \sum_{\mu=1}^{\nu-1}\sum_{\lambda=1}^{\nu-\mu}\sum_{l=-\lambda}^\lambda\sum_{m=-\mu}^\mu\sum_{p,q,r=1}^2 \gamma^{n,m,l,\nu+1,\mu,\lambda}_{j,p,q,r}(\cdot,\omega_0) u^{m,\mu}_p u^{l,\lambda}_q u^{n-m-l,\nu+1-\mu-\lambda}_r,
		\end{aligned}
		\eeq
		where the summands  with  $|n-m|>\nu-\mu$ or $|n-m-l|>\nu-\mu-\lambda$ vanish.
		To estimate $\Vert r_j\Vert_{H^1(\R_\pm)}$, we first note that 
		$$\sum_{p,q=1}^2\Vert u^{m,\mu}_p \Vert_{H^1(\R_\pm)} \Vert u^{l,\lambda}_q\Vert_{H^1(\R_\pm)}\leq \sqrt{2}\Vert u_E^{l,\lambda}\Vert_{H^1(\R_\pm ,\C^3)} \sum_{p=1}^2 \Vert u^{m,\mu}_p\Vert_{H^1(\R_\pm)} \leq 2\Vert u_E^{m,\mu}\Vert_{H^1(\R_\pm,\C^3)} \Vert u_E^{l,\lambda}\Vert_{H^1(\R_\pm,\C^3)},$$
		for any $\mu,\lambda\in\N$, $l,m\in\Z$, with an analogous result and the constant $2^{3/2}$ if the sum is taken over the product of three functions. 
		Using additionally the fact that each sum in $m$ and $l$ in \eqref{eqn:form-of-r} has less than $2\nu +1$ terms, as well as the induction hypothesis (iv), the algebra property \eqref{eqn:algebra} and the bound \eqref{eqn:estimate-Gamma} from Lemma \ref{lem:gamma_est}, we further estimate for $j=1,2$
		\begin{align}
			\Vert r_j \Vert_{H^1(\R_-)}&\leq c_A \sum\limits_{\mu=1 }^{\nu} \sum\limits_{m=-\mu }^{\mu} \Vert \beta^{n,m,\nu +1,\mu}(\cdot ,\omega_0)\Vert_{W^{1,\infty}(\R_-)} 2\Vert u_E^{m,\mu}\Vert_{H^1(\R_-,\C^3)}\Vert u_E^{n-m,\nu +1-\mu}\Vert_{H^1(\R_-,\C^3)}\notag\\
			&\quad + c_A^2\sum_{\mu=1}^{\nu -1}\sum_{\lambda=1}^{\nu-\mu}\sum_{l=-\lambda}^\lambda\sum_{m=-\mu}^\mu \Vert \gamma^{n,m,l,\nu+1,\mu,\lambda}(\cdot ,\omega_0)\Vert_{W^{1,\infty}(\R_-)} 2^{3/2} \Vert u_E^{m,\mu}\Vert_{H^1(\R_-,\C^3)}\\
			& \hspace{4cm} \times \Vert u_E^{l,\lambda}\Vert_{H^1(\R_-,\C^3)}\Vert u_E^{n-m-l,\nu +1 -\mu-\lambda}\Vert_{H^1(\R_-,\C^3)}\notag\\
			&\leq 2 c_A c_\beta^-(\nu +1)e^{\sqrt{2}T_N|\omega_I|(\nu +1)} \sum\limits_{\mu=1 }^{\nu} \sum\limits_{m=-\mu }^{\mu} c_u^{\mu-1} \varepsilon^{\mu}c_u^{\nu+1-\mu-1}\varepsilon^{\nu+1-\mu}\notag\\
			&\quad + 2^{3/2} c_A^2 c_\gamma^- (\nu+1)e^{\sqrt{3}T_N|\omega_I|(\nu+1)}\sum_{\mu=1}^{\nu -1}\sum_{\lambda=1}^{\nu-\mu}\sum_{l=-\lambda}^\lambda\sum_{m=-\mu}^\mu c_u^{\mu-1}\varepsilon^{\mu}c_u^{\lambda-1}\varepsilon^{\lambda}c_u^{\nu +1-\mu-\lambda-1}\varepsilon^{\nu+1 -\mu-\lambda}\notag\\
			&\leq 4 c_A c_\beta^- (\nu+1)^3 e^{\sqrt{2}T_N|\omega_I|(\nu +1)} c_u^{\nu-1}\varepsilon^{\nu+1} + 2^{7/2} c_A^2 c_\gamma^- (\nu+1)^5 e^{\sqrt{3}T_N|\omega_I|(\nu+1)} c_u^{\nu-2}\varepsilon^{\nu+1}\notag\\
			&\leq  (\nu+1)^5 e^{\sqrt{3}T_N|\omega_I|(\nu +1)} c_u^{\nu-1}\varepsilon^{\nu+1} \left(4 c_A c_\beta^- + 2^{7/2} \frac{c_A^2 c_\gamma^-}{c_u} \right) \label{eqn:estimate_r}
		\end{align}
		The analogous result on $\R_+$ follows in the same way, hence
		\[\Vert (r_1, r_2) \Vert_{H^1(\R_\pm,\C^2)}\leq \sqrt{2}(\nu+1)^5 e^{\sqrt{3}T_N|\omega_I|(\nu +1)} c_u^{\nu-1}\varepsilon^{\nu+1} \left(4 c_A c_\beta^\pm + 2^{7/2} \frac{c_A^2 c_\gamma^\pm}{c_u} \right).\]
		From \ref{ass:A-resolvEst} it now follows that the unique solution $u^{n,\nu+1}$ of $\cL_{nk}(\omega_0^{(n,\nu+1 )})u^{n,\nu+1}=r$ fulfills
		\beq \label{eqn:simpleEst}
		\Vert u_E^{n,\nu+1} \Vert_{H^1\left(\R_\pm,\C^3\right)}\leq \sqrt{2}M c_u^{\nu-1}\varepsilon^{\nu+1}\left( \left(4 c_A c_\beta^- + 2^{7/2} \frac{c_A^2 c_\gamma^-}{c_u} \right) +\left(4 c_A c_\beta^+ + 2^{7/2} \frac{c_A^2 c_\gamma^+}{c_u} \right)\right) .
		\eeq
		The choice of $c_u$ in \eqref{eqn:cuDef} guarantees that the term in parentheses in \eqref{eqn:simpleEst} can be bounded by $c_u/(\sqrt{2}M)$, because $c_u$ is the maximum of all solutions to the two equations
		$$\left(4 c_A c_\beta^\pm + 2^{7/2} \frac{c_A^2 c_\gamma^\pm}{c_u} \right) =\frac{c_u}{2^{3/2}M},$$
		which are equivalent to quadratic equations in $c_u$. We thus arrive at $$\Vert u_E^{n,\nu+1} \Vert_{H^1\left(\R_\pm,\C^3\right)}\leq c_u^{\nu}\varepsilon^{\nu+1},$$
		proving (iv) for $\nu+1$ instead of $\nu$ and finishing the induction.
		
		Estimates of the remaining part of $u^{n,\nu}$, namely of $\Vert u_3^{n,\nu}\Vert_{H^1(\R_\pm)}$, and thus also of $\|u_H^{n,\nu}\|_{\cH^1}$, can now simply be obtained by \ref{ass:A-resolvEst}. In more detail, denote again for any fixed $(n,\nu)\in\mathbf{I}\setminus \{ (1,1),(-1,1)\}$ the right hand side by
		$r:=h^{n,\nu}(\cdot ,\omega_0,\vec{u})$, then there exists a unique solution $u^{n,\nu}$  to
		$$\cL_{nk}(\omega_0^{(n,\nu)})u^{n,\nu}(x)=r(x)$$
		with $r_3=0$. The estimates for $r_1$ and $r_2$ in \eqref{eqn:estimate_r} (with $\nu +1$ replaced by $\nu$), the property (iv) and assumption \ref{ass:A-resolvEst} imply
		\beq \label{eqn:estimate-u3-minus}
		\begin{aligned} 
			\Vert u_3^{n,\nu} \Vert_{H^1\left(\R_\pm\right)}&\leq \sqrt{2}M\nu^5e^{(\alpha +\sqrt{3}T_N|\omega_I|)\nu}c_u^{\nu-2}\varepsilon^\nu \left( \left(4 c_A c_\beta^- + 2^{7/2} \frac{c_A^2 c_\gamma^-}{c_u} \right) +\left(4 c_A c_\beta^+ + 2^{7/2} \frac{c_A^2 c_\gamma^+}{c_u} \right)\right)\\
			&\leq \nu^5e^{(\alpha +\sqrt{3}T_N|\omega_I|)\nu}c_u^{\nu-1}\varepsilon^\nu .
		\end{aligned}  
		\eeq
		Thus
		\beq \label{eqn:estimate_u3_and_uplus}
		\begin{aligned}
			\Vert u^{n,\nu}\Vert_{H^1\left(\R_\pm,\C^3\right)} &\leq c_u^{\nu-1}\eps^{\nu}\left( 1+\nu^{10}e^{2(\alpha +\sqrt{3}T_N|\omega_I|)\nu}\right)^{1/2} \\
			&\leq c_u^{-1} \eps^{\nu} \widetilde{M}^\nu,
		\end{aligned}
		\eeq
		where $\widetilde{M}= 10\sqrt{2}c_u\max\left\{ 1,e^{\alpha +\sqrt{3}|\omega_I|T_N}\right\}$ can be obtained by a simple calculation using the fact,
		that $\nu^5 \leq 10^\nu$, $\forall \nu\in\N$. This shows \eqref{E:un-nu-est} with $c=c_u^{-1}$.

		Let us now assume $\omega_I\leq 0$ and study the solution of the Maxwell equations \eqref{eqn:MaxwellEq} generated by $\psi$ in \eqref{E:Maxw-sol}. Formally, (without checking the convergence of the series in $\psi$) the fields $\cE:= \boldsymbol{\mu}_0(\psi_1,\psi_2,0)^\top, \cH:=(0,0,\psi_3)^\top$ satisfy Maxwell equations by construction of the vector $\vec{u}$. This is clear for the first two equations in \eqref{eqn:MaxwellEq}. The divergence condition $\nabla \cdot \cH=0$ is trivial since $\cH_{1,2}\equiv 0$ and $\cH$ is independent of $z$. For the divergence condition on $\cD$ note that \eqref{eqn:DField} can be rewritten as
		$$
		\begin{aligned}
			&\cD (x,y,t) = -\sum\limits_{n\in \Z} \sum\limits_{\nu\in \N} e^{\ri n(ky-\omega_R t) +\nu\omega_It}  \frac{1}{\omega_0^{(n,\nu)}}B(x,\omega_0^{(n,\nu)})u_E^{n,\nu}(x)  \\
			&- \sum\limits_{n\in \Z} \sum\limits_{\nu\in \N}e^{\ri n(ky-\omega_R) t +\nu\omega_It} \frac{1}{\omega_0^{(n,\nu)}}h^{n,\nu}(x,\omega_0,\vec{u})
		\end{aligned}
		$$
		with $B$ defined in \eqref{E:op-def}.
		
		Because of the exponential dependence on $y$, the $y$-derivative acts on each $n-$summand via multiplication by $\ri nk$. Hence, it is sufficient to show for each $n\in \Z$ and $\nu \in \N$ 
		$$ \nabla_{nk}\cdot (B(\cdot, \omega_0^{(n,\nu)}) u_E^{n,\nu}+h^{n,\nu})=0.$$
		This follows from \eqref{eqn:operatorFormulation} because $B u_E^{n,\nu}+h^{n,\nu}=A_{nk}u^{n,\nu}$ and
		$$\nabla_{nk}\cdot A_{nk} u^{n,\nu}=-\ri \nabla_{nk}\cdot (\nabla_{nk}\times u^{n,\nu})=0.$$

		To show that $\psi(\cdot,\cdot,t)\in H^m([0,\frac{2\pi}{|k|}],\cH^1)$ for all $t\geq -T_{\max}$ we first note that for any $m\in \N_0$
		$$
		\begin{aligned}
			\|\pa_y^m\psi(\cdot,y,t)\|_{\cH^1} &\leq \sum_{\nu \in \N} \sum_{|n|\leq \nu}|n|^m|k|^m\|u^{n,\nu}\|_{\cH^1}e^{\nu \omega_I t}\\
			&\leq\sum_{\nu \in \N} \sum_{|n|\leq \nu}|n|^m|k|^m e^{\nu\omega_I t}c_u^{-1}\widetilde{M}^\nu\varepsilon^\nu\\
			& \leq 2c_u^{-1}|k|^m \sum_{\nu \in \N}\nu^{m+1} \left(e^{\omega_I t}\widetilde{M}\eps\right)^\nu
		\end{aligned}
		$$
		by estimates (iv) and \eqref{eqn:estimate_u3_and_uplus}. This series converges geometrically and uniformly in $t\in [-T_{\max},\infty )$ if
		$$\eps<e^{\omega_I T_\text{max}}\widetilde{M}^{-1}$$
		to some value $N(k,t)<\infty$. Here we have used $\omega_I\leq 0$. Since, for each $m \in \N$ and $y_0\in \R$,
		$$\int_{y_0}^{y_0+2\pi/|k|}\|\pa_y^m\psi(\cdot,y,t)\|_{\cH^1}^2\dd y \leq \frac{2\pi}{|k|}N(k,t)^2  <\infty,$$
		we have, indeed, $y\mapsto \psi(\cdot,y,t)$ is $C^\infty_\mathrm{per}(\R)$ with the $y$-period $[0,\frac{2\pi}{|k|}]$.
		
		The same way one obtains  $\pa_t^p\psi(\cdot,\cdot,t)\in C_\mathrm{per}^\infty(\R,\cH^1)$ for all $p\in\N_0$. The convergence is uniform with respect to $t$ on $[-T_\mathrm{max},\infty)$ such that $\cE, \cH \in C^\infty([-T_\mathrm{max},\infty),C^\infty_\text{per}(\R,\cH^1))$.
		
		Lastly, notice that all integrals in the constitutive equation \ref{eqn:Displacement} for the electric displacement field are now well defined and finite for all $t\geq 0$, because the electric field $\cE$ is only evaluated at times $s\geq -T$ or $s\geq - T_N$
		due to the finite memory of the system.
		
		This concludes the proof of Theorem \ref{T:main}.
	\end{proof}

	\section{Spectrum for an Interface of Two Homogeneous Layers}\label{sec:spec}
	
    It is the aim of the remaining sections to show that even in the case of two spatially homogeneous materials on each side of the interface at $x=0$ the assumptions of Theorem \ref{T:main} can be satisfied. We work with the physically relevant Lorentz material model. Although the well known Drude model is generally simpler, it is not suitable for our construction, see Appendix \ref{app:Drude} for more details. 
	
	It is expected that also in the case of non-homogeneous materials on either side of the interface assumptions of Theorem \ref{T:main} can be satisfied. However, checking this analytically will be more difficult and typically impossible.

	\subsection{Physical Models for $\chi^{(1)}$, $\chi^{(2)}$, and $\chi^{(3)}$}\label{PhyModel}
	
	\noindent\textbf{Standard Model for $\chi^{(1)}$}\\
	We will consider only the case of the interface of two homogeneous isotropic materials, for which the linear susceptibility is given by $\chi^{(1)}= \chi^{(1)}_\mathrm{sc}I_{3\times 3}$ with some scalar distribution $\chi^{(1)}_\mathrm{sc}\in\cD'_c(\R)$. Note that this diagonal form of $\chi^{(1)}$ satisfies the compatibility assumption \ref{ass:A-TM}  for the TM polarization. In the context of metallic structures and surface plasmons the Lorentz model for $\chi^{(1)}_\mathrm{sc}$ is commonly used. In the frequency domain it has the form
	
	\beq\label{E:Lorentz}
	\hat{\chi}^{(1)}_L(\omega)=-\frac{c_L}{\omega^2+2\ri\gamma\omega-\omega_*^2}
	\eeq
	with $c_L,\gamma,\omega_*>0$, see e.g. \cite{Pitarke_2007,ACL2018,CJM2023}. In the time domain we have

	$$\chi^{(1)}_L(t):=\frac{c_L}{\sqrt{\omega_*^2-\gamma^2}} e^{-\gamma t}\sin\left(t\sqrt{\omega_*^2-\gamma^2}\right)\theta(t),$$
	where $\theta$ is the Heaviside function and $\omega_*>\gamma$, see \cite{PBK2011}.
	
	Note that \eqref{E:Lorentz} is the Fourier-Laplace transform of $\chi^{(1)}_L$ only for $\Imag(\omega)>-\gamma$ as only for these values of $\omega$ the integral in \eqref{E:FT-L1} converges. In particular, $\hat{\chi}^{(1)}_L$  is not defined at all the points of $\mathbf{S}$. However, our construction uses all points in $\mathbf{S}$. That leads us to consider the truncated susceptibility $\chi^{(1)}_L \mathbbm{1}_{[0,T]}$. By the theorem of Paley-Wiener, see Theorem 7.22 in \cite{rudinFA}, we obtain that the Fourier-Laplace transform of such $\chi^{(1)}$ is analytic on the whole $\C$. Note that the temporal truncation results in a finite memory model. We define
	
	\beq\label{E:Lorentz-T}
	\chi^{(1)}_{L,T}(t):=\frac{c_L}{c_*} e^{-\gamma t}\sin\left(c_*t\right)\mathbbm{1}_{[0,T]}(t), \quad c_*:=\sqrt{\omega_*^2-\gamma^2}>0
	\eeq
	with Fourier-Laplace transform
	
	\beq\label{E:Lorentz-T-FT}
	\hat{\chi}^{(1)}_{L,T}(\omega)=-\frac{c_L}{\omega^2+2\ri\gamma\omega-\omega_*^2}\left[1+e^{(\ri \omega-\gamma )T}\left(\frac{\ri\omega-\gamma}{c_*}\sin\left(c_*T\right)-\cos\left(c_*T\right)\right)\right].
	\eeq
	
	The medium on the left side of the interface ($x<0$) will be described by the Lorentz model and on the right side we assume a non-dispersive medium with a positive and constant susceptibility. This means (in the notation \eqref{eqn:D-for-fns}) that $\chi^{(1)}_+=\alpha \delta I_{3\times 3}$ with $\alpha>0$ and $\delta$ being the (temporal) Dirac delta distribution in $\R$. Hence, we set
	\beq\label{E:Lorentz-interf}
	\hat{\chi}^{(1)}_\mathrm{sc}(x,\omega):=\begin{cases}
		\hat{\chi}^{(1)}_{L,T}(\omega), & x<0,\\
		\alpha>0, & x>0.
	\end{cases}
	\eeq
	The permittivity has the form $\perm_0(1+\chi^{(1)}_\mathrm{sc})I_{3\times 3}$ and the operator $B$ in \eqref{E:op-def} becomes
	\beq\label{E:B-diag}
	B(x,\omega)= -\omega \begin{pmatrix} \mu_0 \perm_0 (1+\hat\chi^{(1)}_\mathrm{sc}(x,\omega)) & 0 &0 \\
		0 & \mu_0 \perm_0 (1+\hat\chi^{(1)}_\mathrm{sc}(x,\omega)) & 0\\
		0 & 0 &1
	\end{pmatrix}.
	\eeq
	
	 To simplify the notation, we  denote by $\perm$ the scalar factor of the permittivity, i.e.,
	$$\perm:=\perm_0(1+\chi^{(1)}_\mathrm{sc})$$
	and define also $\perm_\pm$ to be the restriction of $\perm$ to $\R_\pm$.
	With that we have
	\beq\label{E:perm-LT}
	\perm(x,\omega)=\begin{cases} \perm_-(\omega):= \perm_0(1+\hat{\chi}^{(1)}_{L,T}(\omega)), & x<0,\\
		\perm_+(\omega):= \perm_0(1+\alpha), & x>0.
	\end{cases}
	\eeq
	We denote the operators corresponding to the truncated Lorentz model by $\cL_{nk}^T$, i.e., with the operators $A_{nk}$ and $B$ in \eqref{E:op-def},
	\beq\label{E:Lnk-DL}
	\begin{aligned}
		\cL_{nk}^T(\omega)&:=A_{nk}-B(x,\omega) \quad \text{with} \ B \ \text{in} \ \eqref{E:B-diag},
	\end{aligned}
	\eeq
	where we again suppress the $x-$dependence in the operator notation.
	
	\medskip
	\noindent\textbf{Standard Model for $\chi^{(2)}$ and $\chi^{(3)}$}\\
	A canonical model for the quadratic and cubic susceptibilities $\hat{\chi}^{(2,3)}$ based on the Lorentz model of a homogeneous medium is
	\beq\label{E:chi2}
	\hat{\chi}^{(2)}_{j,p,q}(\omega_1,\omega_2)=c^{(2)}_{j,p,q} \hat D(\omega_1)\hat D(\omega_2)\hat D(\omega_1+\omega_2),
	\eeq
	\beq\label{E:chi3}
	\hat{\chi}^{(3)}_{j,p,q,r}(\omega_1,\omega_2,\omega_3)=c^{(3)}_{j,p,q,r} \hat D(\omega_1)\hat D(\omega_2)\hat D(\omega_3)\hat D(\omega_1+\omega_2+\omega_3),
	\eeq
	where $c^{(2)}_{j,p,q},c^{(3)}_{j,p,q,r}>0$ are material constants for ech $j,p,q,r\in \{1,2,3\}$ and
	$$\hat D(\omega)=-\frac{1}{\omega^2+2\ri \tilde\gamma \omega-\tilde\omega_*^2 }$$
	with $\tilde\omega_*>0$ being a ``resonant frequency" and $\tilde\gamma>0$, see \cite{boyd2008}, Sec. 1.4. 
    
	In the time domain these functions are given by
	$$
	\begin{aligned}
		\chi^{(2)}_{j,p,q}(t_1,t_2)&=c^{(2)}_{j,p,q} \int_\R D(s)D(t_1-s)D(t_2-s)\,\mathrm{d}s,\\
		\chi^{(3)}_{j,p,q,r}(t_1,t_2,t_3)&=c^{(3)}_{j,p,q,r} \int_\R D(s)D(t_1-s)D(t_2-s)D(t_3-s)\,\mathrm{d}s	\end{aligned}
	$$
	with $$D(t)=\frac{1}{\sqrt{\tilde\omega_*^2-\tilde{\gamma}^2}}e^{-\tilde{\gamma}t}\sin (t\sqrt{\omega_*^2-\tilde{\gamma}^2})\theta (t).$$
	Similarly to $\chi^{(1)}$, we truncate $\chi^{(2,3)}$ to satisfy \ref{ass:A-cpctSupport}. This ensures convergence of the convolutions in \eqref{eqn:Displacement}.  Choosing $T_N>0$, we work with
	\beq \label{E:chi23_trunc}
	\begin{aligned}
		\chi^{(2)}_{{T}_{j,p,q}}(t_1,t_2)&:= c^{(2)}_{j,p,q} \int_\R D(s)D(t_1-s)\mathbbm{1}_{[0,T_N]}(t_1)D(t_2-s)\mathbbm{1}_{[0,T_N]}(t_2)\,\mathrm{d}s,\\
		\chi^{(3)}_{{T}_{j,p,q,r}}(t_1,t_2,t_3)&:= c^{(3)}_{j,p,q,r} \int_\R D(s)D(t_1-s)\mathbbm{1}_{[0,T_N]}(t_1)D(t_2-s)\mathbbm{1}_{[0,T_N]}(t_2)D(t_3-s)\mathbbm{1}_{[0,T_N]}(t_3)\,\mathrm{d}s .
	\end{aligned}
	\eeq
	
	In the studied example of homogeneous media we set, for $m=2,3,$
	\beq\label{E:chi23-ex}
	\hat{\chi}^{(m)}(x,\omega_1,\dots,\omega_m)=\begin{cases} \hat{\chi}^{(m)}_-(\omega_1,\dots,\omega_m):= \hat{\chi}^{(m)}_{T}(\omega_1,\dots,\omega_m), & x<0,\\
		\hat{\chi}^{(m)}_+(\omega_1,\dots,\omega_m):=0, & x>0,
	\end{cases}
	\eeq
	and in order to satisfy \ref{ass:A-TM} we impose the conditions
	\beq \label{E:chi23-TM}
	c^{(2)}_{3,p,q}=0  \ \text{if} \ p,q\in \{1,2\}, \ \text{and} \   c^{(3)}_{3,p,q,r}=0  \ \text{if} \ p,q,r\in \{1,2\}.
	\eeq
	The choice $\hat{\chi}^{(2,3)}_+=0$ will be necessary to satisfy assumption \ref{ass:A-resolvEst} of Theorem \ref{T:main}. This is due to estimates which we prove for the resolvent in Section \ref{S:resolv}, see Corollary \ref{cor:summary_example}.
	
	The following Proposition \ref{verify_A1-A4} summarizes which assumptions of Theorem \ref{T:main} are satisfied so far.
	\bprop\label{verify_A1-A4}
	Let $\perm$, $\hat{\chi}^{(2)}$, and $\hat{\chi}^{(3)}$ be given by \eqref{E:perm-LT} and \eqref{E:chi23-ex} and let \eqref{E:chi23-TM} be satisfied. Then assumptions \ref{ass:A-cpctSupport}--\ref{ass:A-periodic} hold.
	\eprop
	\begin{proof}
		Both \ref{ass:A-cpctSupport} and \ref{ass:A-cont} are satisfied by the choice of $\chi^{(1)}_{L,T}$ and $\chi^{(2,3)}_T$ because each is either a Dirac delta or an $L^2$ function with the compact support $[0,T], [0,T_N]^2$, and $[0,T_N]^3$ resp. Such functions induce distributions of compact support. Assumption \ref{ass:A-TM} follows from \eqref{E:chi23-TM} and assumption \ref{ass:A-periodic} holds trivially because $\chi^{(1)}_{L,T}$ and $\chi^{(2,3)}_T$ are independent of $x$ on each half-line $\R_\pm$.
	\end{proof}
	

	\subsection{Spectrum of the Interface Problem}\label{S:spec-interf}
	
	This section studies the spectrum of the interface problem for the Lorentz interface \eqref{E:Lorentz-interf}. Note that this is an operator pencil problem for which the spectrum was defined in Sec. \ref{sec.spectrum}. After discussing the existence of an eigenvalue $\omega_0$ as in \ref{ass:A-eval}, the aim is to check when \ref{ass:A-resolvent} holds.
	
	The spectrum for interfaces with spatially homogeneous layers in $\R_\pm$, as in example \eqref{E:Lorentz-interf}, was studied in \cite{BDPW25}. It was shown that irrespectively of the form of $\omega \mapsto \hat{\chi}_\mathrm{sc}^{(1)}(\omega)$ it consists of the point spectrum and the essential spectrum and has the following structure. Let $D(\perm)\subset \C$ be the intersection of the $\omega$-domains of $\perm_+$ and $\perm_-$. Outside the set
	\beq \label{eqn:Omega0} \Omega_0:=\{\omega\in D(\perm):\omega^2\perm_+(\omega)=0 \ \text{or} \ \omega^2\perm_-(\omega)=0\}\eeq the eigenvalues are simple and for the wavenumber $nk$ they are given by solutions of the \textit{dispersion relation}
	\beq\label{E:ev.cond}
	n^2k^2(\perm_+(\omega)+\perm_-(\omega))=\omega^2\boldsymbol{\mu}_0\perm_+(\omega)\perm_-(\omega).
	\eeq
	Hence, for the operator $\cL^T_{nk}$ defined in \eqref{E:Lnk-DL} we have
	\beq\label{E:pt-spec-red}
	\sigma_p(\cL^T_{nk})\setminus \Omega_0 = \{\omega\in D(\perm)\setminus\Omega_0: \eqref{E:ev.cond} \ \text{holds}\}.
	\eeq
	The essential spectrum outside $\Omega_0$ consists of curves given via the conditions
	\beq\label{E:ess.cond.plus}
	\omega^2\boldsymbol{\mu}_0\perm_+ (\omega)\in
	\begin{cases}
		[n^2k^2,\infty) \ & \text{if} \ k\neq 0,\\
		(0,\infty) \ &\text{if} \ k=0,
	\end{cases}
	\eeq
	and
	\beq\label{E:ess.cond.minus}
	\omega^2\boldsymbol{\mu}_0\perm_- (\omega)\in
	\begin{cases}
		[n^2k^2,\infty) \ & \text{if} \ k\neq 0,\\
		(0,\infty) \ &\text{if} \ k=0,
	\end{cases}
	\eeq
	i.e.,
	\beq\label{E:ess-spec-red}
	\sigma_\text{ess}(\cL^T_{nk})\setminus \Omega_0 = \{\omega\in D(\perm)\setminus\Omega_0: \eqref{E:ess.cond.plus} \ \text{or} \  \eqref{E:ess.cond.minus} \ \text{holds}\}.
	\eeq
	Inside $\Omega_0$ eigenvalues of both finite and infinite multiplicity are possible.

	Interfaces with the untruncated model $\hat{\chi}^{(1)}_{L}$  (instead of $\hat{\chi}^{(1)}_{L,T}$)  were considered as examples in \cite{BDPW25}.
	We denote the operator in the untruncated case by $\cL^\infty_{nk}$, i.e., similarly to \eqref{E:Lnk-DL} we define for $\alpha>0$
		\beq\label{E:Lnk-DL-infty}
	\begin{aligned}
		\cL^\infty_{nk}(\omega)&:=A_{nk}-B(x,\omega) \qquad  \text{with} \quad \hat\chi^{(1)} =  \left( \mathbbm{1}_{\{x<0\}}\hat{\chi}^{(1)}_L + \mathbbm{1}_{\{x>0\}}\alpha\right)I_{3\times3}.
	\end{aligned}
	\eeq
	Note that the corresponding permittivity is given by
	$$
	\perm(x,\omega)=\begin{cases} \perm_-^\infty(\omega):= \perm_0(1+\hat{\chi}^{(1)}_{L}(\omega)), & x<0,\\
		\perm_+(\omega)= \perm_0(1+\alpha), & x>0.
	\end{cases}
$$
	
	In the untruncated case determining the spectrum is straightforward because equation \eqref{E:ev.cond} and the relations \eqref{E:ess.cond.plus} and \eqref{E:ess.cond.minus} can be reduced to polynomial equations of degree at most four, which are solvable explicitly using computer algebra. In particular, there  are four eigenvalues and finitely many curves (four corresponding to $\perm_-$ and two corresponding to $\perm_+$) of the essential spectrum for $\cL^\infty_{nk}$. It was observed in \cite{BDPW25} that  the whole point spectrum of $\cL_{nk}^\infty$ outside of $\Omega_0$ is contained in the open strip $S_\gamma:=\{\omega \in \C: \Imag (\omega )\in (-\gamma ,0)\}$, i.e.,
	\beq\label{E:pt-spec-bds}
	\sigma_p(\cL^\infty_{nk}) \setminus \Omega_0 \subset S_\gamma=\{\omega \in \C: \Imag (\omega)\in (-\gamma,0)\}
	\eeq
	and the whole spectrum lies in the closed strip, i.e.,
	\beq\label{E:spec-bds}
	\sigma(\cL^\infty_{nk})\subset \overline{S_\gamma}= \{\omega\in \C: \Imag(\omega)\in [-\gamma,0]\}.
	\eeq
	These inclusions hold for all $n\in \N$ and $k \in \R$.
	In Figure \ref{fig:spectrum_lorentz}\subref{fig:rectangle} we plot the spectrum of $\cL_{k}^\infty$ for a concrete set of parameters.
	In the truncated case, where $\hat{\chi}^{(1)}_{L,T}$ involves exponential and trigonometric functions, equation \eqref{E:ev.cond} and the equations corresponding to \eqref{E:ess.cond.plus} and \eqref{E:ess.cond.minus} are transcendental and explicit  solutions are typically not available.
	
	We study the spectrum mainly in order to check assumption \ref{ass:A-resolvent}. The aim is to show that if \ref{ass:A-resolvent} holds in the untruncated case, i.e., if an eigenvalue $\omega_0^\infty = \omega_{R,\infty} + \ri \omega_{I,\infty}$ of $\cL_{k}^\infty$ satisfies
	\beq\label{E:NR-infty}
	n\omega_{R,\infty}+\ri\nu\omega_{I,\infty} \in \rho(\cL^\infty_{nk}) \quad \text{for all} \quad  (n,\nu) \in \mathbf{I}\setminus \{(1,1),(-1,1)\},
	\eeq
	then there is an eigenvalue $\omega_0$ of the truncated operator $\cL_{nk}^T$ such that \ref{ass:A-resolvent} holds for the truncation $T$ large enough. Due to \eqref{E:spec-bds} only finitely many indices $(n,\nu)$ need to be tested in assumption \eqref{E:NR-infty}. As explained above, assumption \eqref{E:NR-infty} can be checked easily using computer algebra. 
	
In this section we use a combination of analytical and numerical methods to show hat this strategy works in the case of the above Lorentz interface. In fact, in the rest of this section we study only whether $\omega_0^{(n,\nu)}\notin \sigma_p(\cL_{nk}^T)$. For a rigorous proof of this property see Lemma \ref{L:ptsp-belowgam} and for a rigorous verification of \ref{ass:A-resolvent} see Prop. \ref{resol_set}.
	
	
	As we show here, the Lorentz model is well behaved under truncation, in the sense that the point spectrum of the interface problem perturbs only slightly due to a truncation with large enough $T$. The dispersion relation
	\eqref{E:ev.cond} for an interface between a dielectric and a truncated Lorentz material, i.e., the permittivity given by \eqref{E:perm-LT}, is equivalent to the equation
	\beq \label{E:dispRel-Lorenz}
	\begin{split}
		G_n(\omega,T):= \left((\perm_0^{-1}n^2k^2-\omega^2\boldsymbol{\mu}_0)\perm_+ +n^2k^2\right)\left( \omega^2 +2\ri \gamma\omega - \omega_*^2\right) - c_L(n^2k^2-\omega^2\boldsymbol{\mu}_0\perm_+) \\
		-c_L e^{(\ri\omega-\gamma )T}(n^2k^2-\omega^2\boldsymbol{\mu}_0\perm_+)\left( \frac{\ri\omega -\gamma}{c_*}\sin (c_*T)-\cos(c_*T)\right) =0,
	\end{split}
	\eeq
	where we recall that $c_*=\sqrt{\omega_*^2-\gamma^2} \in \R$, $n \in \N$, and $k \in\R$.
    Similarly, we define the function 
    \beq \label{E:dispRel-Lorenz-untruncated}
		G_n^\infty(\omega):= \left((\perm_0^{-1}n^2k^2-\omega^2\boldsymbol{\mu}_0)\perm_+ +n^2k^2\right)\left( \omega^2 +2\ri \gamma\omega - \omega_*^2\right) - c_L(n^2k^2-\omega^2\boldsymbol{\mu}_0\perm_+) 
	\eeq
    which encodes the dispersion relation of the untruncated operator $\cL^\infty_{nk}$.

	As discussed in \eqref{E:pt-spec-bds}, the point spectrum of $\cL_{nk}^\infty$ outside $\Omega_0$ is included in the strip $S_\gamma =\{\Imag (\omega )\in (-\gamma ,0)\}$
	for each $n\in \Z, k \in \R$. 
	We demonstrate that the point spectrum perturbs only slightly under the truncation with $T$ large. We start by employing the argument principle from complex analysis, see e.g. Theorem 4.18 in \cite{Ahlfors1979}, to determine
	the number of solutions to \eqref{E:dispRel-Lorenz} inside a large rectangle contained in $S_\gamma$. Namely, as $G_n(\cdot,T)$ is an entire function, the integral
	$$I_T:=\frac{1}{2\pi\ri}\int_\Gamma \frac{\pa_\omega G_n(\omega,T)}{G_n(\omega,T)}\,\mathrm{d}\omega$$
	along the boundary $\Gamma$ of some simply connected domain is an integer and gives precisely the number of zeros of $G_T$ contained in that domain. If we choose $\Gamma$ to be the edges of the rectangle $R_{a,\delta}\subset \C$ with vertices $\pm a$ and $\pm a +\ri (-\gamma +\delta)$ with some large $a>0$ and small $\delta >0$, as shown in Figure \ref{fig:spectrum_lorentz}\subref{fig:rectangle}, then the  rectangle includes all four eigenvalues of the untruncated operator. We calculate the
	integral $I_T$ numerically and obtain that $I_T=4$ if $T$ is large enough and $\delta$ is not too small. On the other hand, $I_T>4$ if $\delta$ is chosen too small, i.e., additional eigenvalues are detected near $\Imag(\omega)=-\gamma$. Next, we check that for any $\delta>0$ (arbitrarily small) no additional eigenvalues appear in $R_{a,\delta}$, i.e., $I_T=4$ if $T=T(\delta)$ is chosen large enough.
	Hence, we define $\delta_0(T)>0$ to be the minimal value
	$\delta$ such that $I_T=4$ on $\Gamma:=\partial R_{a,\delta}$ (with a fixed $a$). Figure \ref{fig:spectrum_lorentz}\subref{fig:deltasLorenz} shows an almost reciprocal dependence between $T$ and $\delta_0$ for $a=1000$, supporting our claim.
	
	Next, we show analytically by the implicit function theorem that the four eigenvalues detected by the argument principle are close to the four eigenvalues in the untruncated model.
	\begin{figure}
		\centering
		\begin{subfigure}[t]{0.4\textwidth}
			\includegraphics[width=\textwidth]{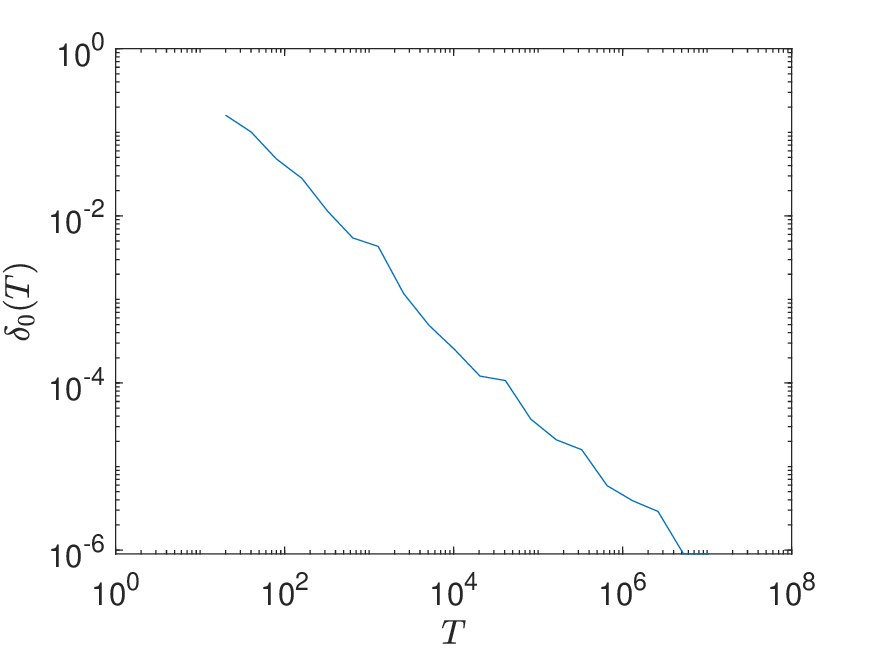}
			\caption{The minimal value $\delta_0 (T)$ such that $G_n(\cdot,T)$ has exactly four zeros with imaginary part greater than $-\gamma +\delta_0$. The zeros were detected using the numerical argument principle on $\Gamma = \pa R_{a,\delta_0}$, with $a=1000$. The logarithmic plot against $T$ shows that roughly $\delta_0(T)=O(T^{-0.96})$. }
			\label{fig:deltasLorenz}
		\end{subfigure}
		\hspace{0.03\textwidth}
		\begin{subfigure}[t]{0.4\textwidth}
			\includegraphics[width=\textwidth]{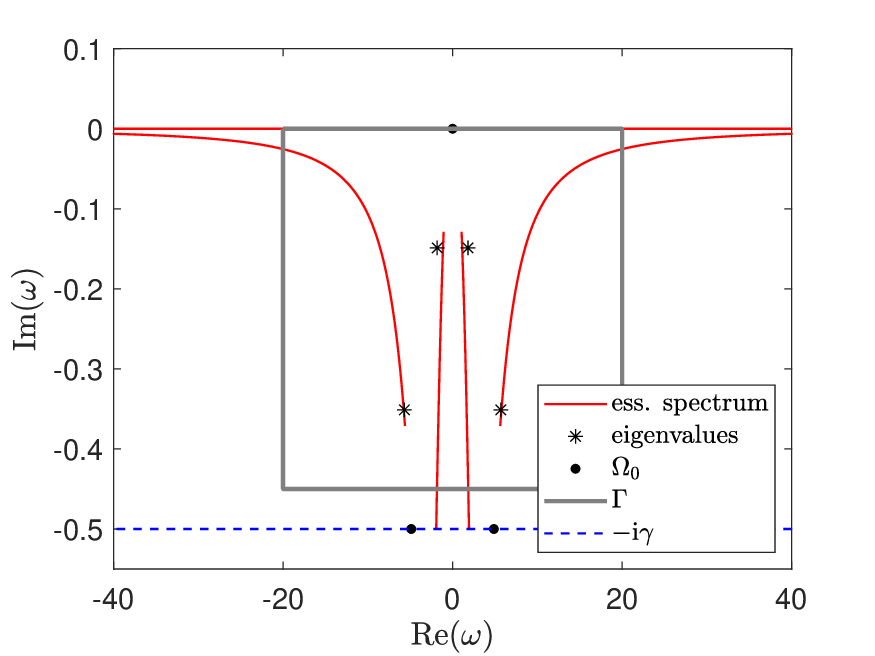}
			\caption{The spectrum of the untruncated operator $\cL_{3}^\infty$ with the parameters given by \eqref{eqn:parameters} and the rectangle $\Gamma$ used for the argument principle calculation. The rectangle $\Gamma$ has a width of $2a=40$ and a distance of $\delta=0.05$ to the blue dashed line marking all complex points with imaginary part $-\gamma$.
			}
			\label{fig:rectangle}
		\end{subfigure}
		\caption{\label{fig:spectrum_lorentz}%
			Numerical computations for the spectrum of the truncated Lorentz model. }
	\end{figure}
	
	\begin{lemma}
		\label{lem:evals_lorentz_close}
		Let $k \in \R, n\in \N,$ and $\omega_0^\infty\in S_\gamma$ be an eigenvalue of $\cL^\infty_{k}$ and assume that $\omega_0^\infty$ is a simple zero of the dispersion relation function $G^\infty_1$. Then, for each $\eps>0$ there exists $T_* >0$ such that $\cL_{k}^T$ with the truncation given by $T\geq T_*$ has an eigenvalue $\omega \in B_\eps(\omega_0^\infty)\subset \C$. Moreover, the eigenvalues of the truncated problem generate a continuous curve parametrized by $\tau:=1/T$ such that $\omega(\tau) \to \omega_0^\infty$ as $\tau\to 0$.
	\end{lemma}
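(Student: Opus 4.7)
The plan is to view $G_1(\omega,T)$ as a holomorphic perturbation of $G_1^\infty(\omega)$ that vanishes in the limit $T\to\infty$ on the strip $S_\gamma$, and then locate the perturbed zero by Rouch\'e's theorem. Writing
\begin{equation*}
G_1(\omega,T) = G_1^\infty(\omega) + R(\omega,T), \qquad R(\omega,T):= -c_L e^{(\ri\omega-\gamma)T}\bigl(k^2-\omega^2\boldsymbol{\mu}_0\perm_+\bigr)\!\left(\frac{\ri\omega-\gamma}{c_*}\sin(c_*T)-\cos(c_*T)\right),
\end{equation*}
the first step is a pointwise estimate of the remainder. Since $|e^{(\ri\omega-\gamma)T}|=e^{-(\gamma+\Imag\omega)T}$ and the sinus/cosinus factors are uniformly bounded by $|\ri\omega-\gamma|/c_*+1$, on every compact set $K\subset S_\gamma$ one has $\sup_{\omega\in K}|R(\omega,T)|\le C_K\,e^{-\eta_K T}\to 0$ with $\eta_K:=\gamma+\min_{\omega\in K}\Imag\omega>0$, uniformly in $T$.

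Next I would apply Rouch\'e's theorem. By hypothesis, $\omega_0^\infty$ is an isolated simple zero of the entire function $G_1^\infty$, so for all sufficiently small $\eps>0$ the closed disk $\overline{B_\eps(\omega_0^\infty)}$ lies inside $S_\gamma$, is disjoint from $\Omega_0$ (since $\omega_0^\infty\notin\Omega_0$ by \eqref{E:pt-spec-red} and $\Omega_0$ is closed), and contains no other zero of $G_1^\infty$. On the compact contour $\partial B_\eps(\omega_0^\infty)$ one has $m_\eps:=\min|G_1^\infty|>0$, while by the first step there exists $T_*=T_*(\eps)$ with $\sup_{\partial B_\eps(\omega_0^\infty)}|R(\cdot,T)|<m_\eps$ for all $T\ge T_*$. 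Rouch\'e then gives exactly one zero of $G_1(\cdot,T)$ in $B_\eps(\omega_0^\infty)$; since this zero lies outside $\Omega_0$, the characterization \eqref{E:pt-spec-red} identifies it as an eigenvalue $\omega(T)$ of $\cL_k^T$.

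For the continuous curve, I would set $\tau:=1/T$ and define $\omega(\tau):=\omega(1/\tau)$ for $\tau\in(0,\tau_*]$ with $\tau_*:=1/T_*$. Continuity at $\tau=0$ is immediate from the Rouch\'e step applied with arbitrarily small $\eps$: for every $\eps>0$ there is $\tau_*(\eps)>0$ with $\omega(\tau)\in B_\eps(\omega_0^\infty)$ for $0<\tau\le\tau_*(\eps)$. At any interior point $\tau_1\in(0,\tau_*)$, a further Rouch\'e argument on a still smaller disk (or simply reading off multiplicity from the contour integral $\frac{1}{2\pi\ri}\oint \partial_\omega G_1/G_1\,d\omega$) shows that $\omega(\tau_1)$ is a simple zero, so $\partial_\omega G_1(\omega(\tau_1),1/\tau_1)\neq 0$. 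Since $G_1$ is entire in $\omega$ and real-analytic in $T$ for $T>0$, the analytic implicit function theorem produces a smooth local branch of zeros that must coincide with $\tau\mapsto\omega(\tau)$ by uniqueness.

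The main obstacle is purely the uniform control of $R$ on the contour; the rapidly oscillating terms $\sin(c_*T),\cos(c_*T)$ are harmless because they enter only as bounded multiplicative factors, and the exponential smallness of $e^{-(\gamma+\Imag\omega)T}$ on $\overline{B_\eps(\omega_0^\infty)}$ dominates once $\eps$ is small enough to keep the disk strictly inside $S_\gamma$ and away from $\Omega_0$. Simplicity of $\omega_0^\infty$ as a zero of $G_1^\infty$ is used in a single place, namely to guarantee $m_\eps>0$ for every sufficiently small $\eps$ and thereby to isolate the perturbed zero.
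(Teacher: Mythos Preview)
Your proof is correct and takes a genuinely different route from the paper. The paper reparametrizes by $\tau=1/T$, verifies that the map $\phi(\omega,\tau):=G_1(\omega,1/\tau)$ (extended by $G_1^\infty$ at $\tau=0$) is $C^1$ on $S_\gamma\times[0,\infty)$, and applies the implicit function theorem directly at $(\omega_0^\infty,0)$; this yields the continuous curve in one stroke but requires checking that $\partial_\tau\phi$ has a limit as $\tau\to 0$, i.e., that $T^2\,\partial_T G_1(\omega,T)\to 0$, which comes down to polynomial-against-exponential estimates. Your Rouch\'e argument sidesteps this regularity check entirely: you only need uniform smallness of $R(\cdot,T)$ on a fixed contour, which is the more elementary estimate $\sup_{\partial B_\eps}|R|\le C_\eps e^{-\eta_\eps T}$. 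The price is a two-step structure (Rouch\'e for existence and convergence at $\tau=0$, then a local IFT at interior $\tau$ for continuity of the branch), whereas the paper's IFT handles both at once. Two minor remarks: the set $\Omega_0$ depends on $T$ through $\perm_-$, so to conclude that the Rouch\'e zero is an eigenvalue you should note that $\perm_-^T\to\perm_-^\infty$ locally uniformly on $S_\gamma$, whence $B_\eps(\omega_0^\infty)\cap\Omega_0^T=\emptyset$ for $T$ large; and simplicity is used not merely for $m_\eps>0$ (isolation would suffice for that) but, as you in fact use earlier, to conclude \emph{exactly one} zero inside the disk.
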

	\begin{proof}
		Set $\tau:=1/T$ and $\phi :S_\gamma\times [0,\infty )\to \C$ with
		$$\phi (\omega,\tau ):=\begin{cases}
			G_1(\omega,1/\tau),~~\tau>0,\\
			\left((\perm_0^{-1}n^2k^2-\omega^2\boldsymbol{\mu}_0)\perm_+ +n^2k^2\right)\left( \omega^2 +2\ri \gamma\omega - \omega_*^2\right) - c_L(n^2k^2-\omega^2\boldsymbol{\mu}_0\perm_+), ~~\tau =0,
		\end{cases}$$
		where $0=\phi (\omega ,0)~(=\lim_{T\to\infty}G_1 (\omega,T))$ is exactly the dispersion relation for the untruncated operator pencil $\cL_{k}^\infty$.
		
		Clearly, $\phi$ is continuous and an easy calculation, using the fact that $e^{(\ri\omega -\gamma )T}$ decays for $T\to\infty$ if $\Imag(\omega) > -\gamma$, shows that $\phi\in C^1(S_\gamma\times [0,\infty ))$ with
		$$\partial_\omega \phi(\omega ,0)=\lim_{T\to\infty}\pa_\omega G_1(\omega,T).$$
		
		Now, by assumption, $\omega_0^\infty$ is a simple zero of $G_1^\infty$, thus
		$$\phi(\omega_0^\infty, 0)=0,~~\partial_\omega \phi (\omega_0^\infty ,0)\neq 0.$$
		The implicit function theorem then guarantees a continuous curve $\omega:\ [0,\eta )\to \C$, such that
		$\omega(\tau )$ is a solution of $G_1(\omega (\tau),1/\tau)=0$, i.e., an eigenvalue of $\cL^T_{k}$ with truncation given by $T=\frac{1}{\tau}$. In particular, for any $\eps>0$, one has $\omega (\tau)\in B_\eps (\omega_0^\infty)$
		if $\tau\leq \tau_0$ with $\tau_0$ small enough, by the continuity of the curve. We set $T_*:=\frac{1}{\tau_0}$.
	\end{proof}
	\begin{rem}
		The existence of an eigenvalue $\omega_0\in\C$ of the operator pencil $\cL_{k}^T$ is guaranteed by Lemma \ref{lem:evals_lorentz_close} because $\cL_{k}^\infty$ has eigenvalues. Hence, assumption  \ref{ass:A-eval} is satisfied.
	\end{rem}
	
	In order to compute the four eigenvalues of the truncated model numerically, we use a standard Newton's method applied to the function $G_1$. Lemma \ref{lem:evals_lorentz_close} guarantees that, at least for $T$ large, we can use the eigenvalues of the untruncated model as a starting guess for the Newton iteration. 
	Figure \ref{fig:omega_convergences} shows the convergence of an eigenvalue calculated that way. We choose $k=3$ in this test and the eigenvalue $\omega_0^\infty\approx 1.8179-0.1488 \ri$ of $\cL_{3}^\infty$.
	
	\begin{figure}
		\centering
		\begin{subfigure}[t]{0.4\textwidth}
			\includegraphics[width=\textwidth]{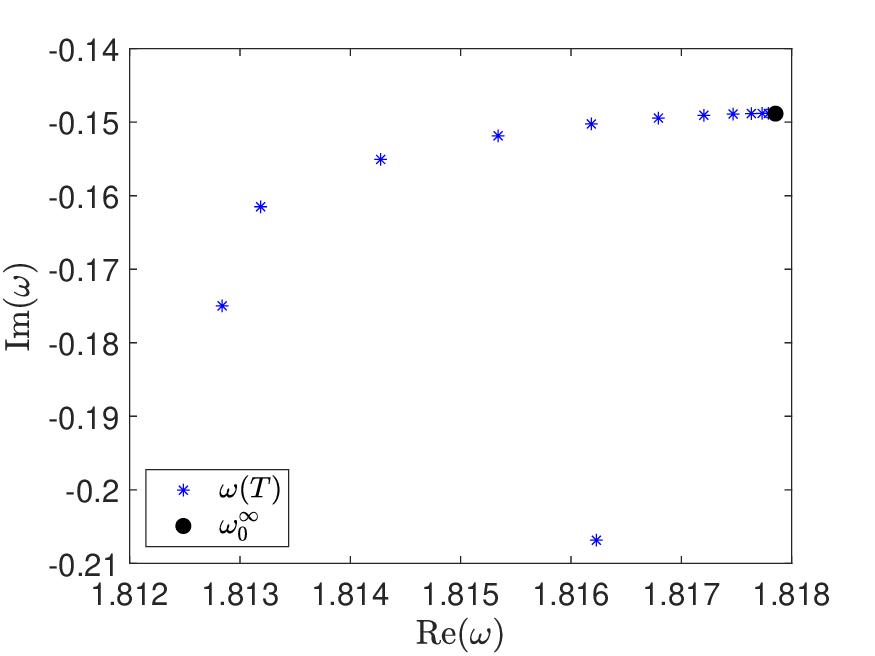}
			\caption{Eigenvalues $\omega (T)$ of the truncated operators $\cL_{3}^T$ with parameters specified in \eqref{eqn:parameters} and several values of $T$, converging to the eigenvalue $\omega_0^\infty\approx 1.8179 - 0.1488\ri$ of $\cL^\infty_{3}$.}
			\label{fig:omega_convergence}
		\end{subfigure}
		\hspace{0.03\textwidth}
		\begin{subfigure}[t]{0.4\textwidth}
			\includegraphics[width=\textwidth]{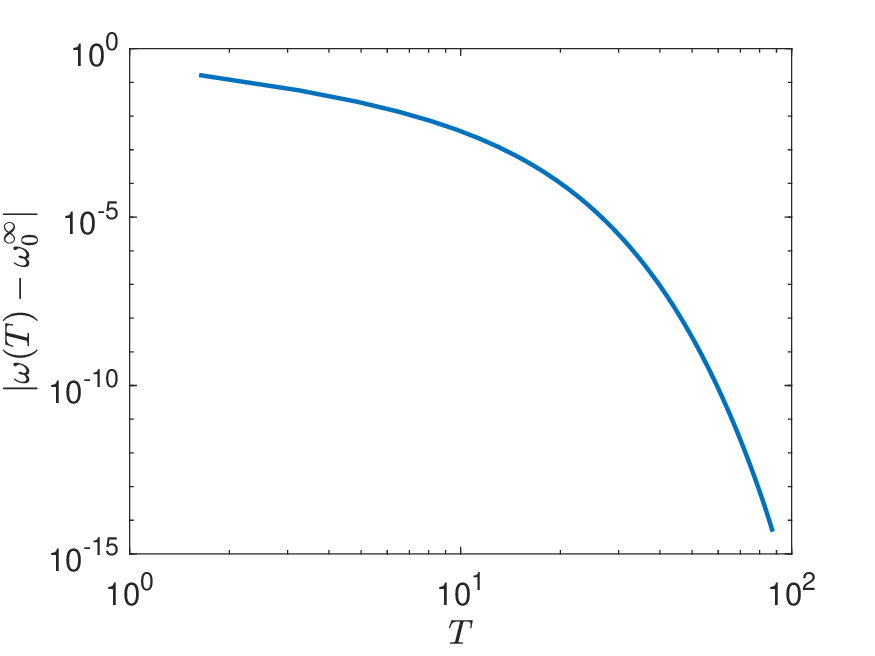}
			\caption{The rate of convergence for the same eigenvalues as in plot (a) in a log-log-plot shows super-polynomial convergence.}
			\label{fig:convergence_rate}
		\end{subfigure}
		\caption{\label{fig:omega_convergences}%
			Convergence of the numerically computed eigenvalues for the truncated Lorentz model.}
	\end{figure}
	
	The next step in checking assumption \ref{ass:A-resolvent} is to ensure that there is no intersection of $\mathbf{S}=\{n\omega_R +\ri\nu\omega_I:~(n,\nu)\in\mathbf{I}\}$ and the eigenvalues of the truncated Lorentz model below the line $\Imag(\omega)=-\gamma$. More precisely, we show that if the truncation $T$ is large enough and chosen as an integer multiple of $\pi/c_*$, then eigenvalues of $\cL_{nk}^T$ in the half plane 
	$$H_\delta := \{ \omega\in\C :~\Imag (\omega )\leq-\gamma -\delta\}$$ 
	with $\delta>0$ must lie outside the cone 
	$$C_l:=\{ \omega\in\C :~|\Real (\omega )|\leq l |\Imag (\omega )|\}$$ 
	with $l>0$. Clearly, if $l$ is chosen large enough, i.e., $l>\vert \frac{\omega_R}{\omega_I}|$, we get the inclusion $\mathbf{S}\subset C_l$.
	\begin{lemma}\label{L:evals-deep-down}
		For each $l,\delta>0$ there exists $J\in \N$ such that $\cL_{nk}^T$ with $T=T_j:=j\frac{\pi}{c_*}$ has no eigenvalues in the set $\left( H_\delta\cap C_l\right)\setminus \Omega_0$ for all $n \in \Z$ if $j\in\N, j \geq J.$
	\end{lemma}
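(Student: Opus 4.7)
The plan is to exploit the special structure of $G_n(\omega,T_j)$ when $T_j=j\pi/c_*$. Since $\sin(c_*T_j)=0$ and $\cos(c_*T_j)=(-1)^j$, the dispersion relation \eqref{E:dispRel-Lorenz} collapses to
\[
G_n(\omega,T_j)\;=\;G^\infty_n(\omega)+(-1)^j c_L\, e^{(\ri\omega-\gamma)T_j}\,P(\omega),
\qquad P(\omega):=n^2k^2-\omega^2\boldsymbol{\mu}_0\perm_+.
\]
Thus an eigenvalue $\omega\in (H_\delta\cap C_l)\setminus\Omega_0$ of $\cL^{T_j}_{nk}$ must satisfy
\begin{equation}\label{E:eigeq-plan}
|G^\infty_n(\omega)|\;=\;c_L\,|P(\omega)|\,e^{(-\omega_I-\gamma)T_j}.
\end{equation}
The strategy is to compare polynomial growth of the left side with exponential growth of the right side.

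First I will establish a polynomial bound $|G^\infty_n(\omega)|/|P(\omega)|\le C(1+|\omega|^4)$ that is \emph{uniform in $n\in\Z$}. This is the step where some care is needed. Writing $R(\omega)=n^2k^2(2+\alpha)-\omega^2\boldsymbol{\mu}_0\perm_+$ and decomposing
\[
\frac{R(\omega)}{P(\omega)}\;=\;(2+\alpha)+(1+\alpha)\,\frac{\omega^2\boldsymbol{\mu}_0\perm_+}{n^2k^2-\omega^2\boldsymbol{\mu}_0\perm_+},
\]
I observe that the zeros of $P$ lie on the real axis, so for any $\omega\in H_\delta$ one has $|P(\omega)|\ge \boldsymbol{\mu}_0\perm_+(\gamma+\delta)^2$ by factoring $P$. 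Hence $|R(\omega)/P(\omega)|\le C_1+C_2|\omega|^2$ uniformly in $n$, which combined with $|Q(\omega)|\le(|\omega|+\gamma)^2+\omega_*^2$ gives the claimed bound.

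Second, I will bound the exponential on the right of \eqref{E:eigeq-plan} from below on $C_l\cap H_\delta$ in two complementary regimes: the trivial lower bound $e^{(-\omega_I-\gamma)T_j}\ge e^{\delta T_j}$ that is useful on bounded $|\omega|$, and the sharper bound $e^{(-\omega_I-\gamma)T_j}\ge e^{(|\omega|/\sqrt{1+l^2}-\gamma)T_j}$ coming from $|\omega_I|\ge |\omega|/\sqrt{1+l^2}$, useful for large $|\omega|$. Fix $M_0:=2\gamma\sqrt{1+l^2}$. Then:
\begin{itemize}
\item For $|\omega|\le M_0$: \eqref{E:eigeq-plan} forces $e^{\delta T_j}\le C(1+M_0^4)$, which fails once $T_j>\delta^{-1}\log(C(1+M_0^4))$.
\item For $|\omega|>M_0$: \eqref{E:eigeq-plan} forces $T_j\le 2\sqrt{1+l^2}\,[\log C+\log(1+|\omega|^4)]/|\omega|$; the supremum of the right side over $|\omega|\ge M_0$ is finite since $\log(1+x^4)/x\to0$ as $x\to\infty$, and is bounded on $[M_0,\infty)$ by some $K=K(l,\delta)$.
\end{itemize}
Choosing $J\in\N$ with $j\pi/c_*$ exceeding both thresholds produces the desired integer.

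The only subtle point is the uniformity of the polynomial bound in $n$; once that is in hand, the proof is a straightforward growth comparison. Note that $\Omega_0$ only has to be excluded to guarantee that the dispersion relation \eqref{E:dispRel-Lorenz} is equivalent to $\omega\in\sigma_p(\cL^{T_j}_{nk})$, while $P(\omega)$ itself never vanishes on $H_\delta$ since its zeros are real, so the division performed in \eqref{E:eigeq-plan} is legitimate throughout.
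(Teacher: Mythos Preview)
Your argument is correct and follows the same outline as the paper: exploit $\sin(c_*T_j)=0$ to write $G_n(\omega,T_j)=G_n^\infty(\omega)+(-1)^jc_Le^{(\ri\omega-\gamma)T_j}P(\omega)$, then show that on $H_\delta\cap C_l$ the polynomial part cannot balance the exponential factor for $j$ large. The organizational difference is that the paper multiplies through by $e^{-(\ri\omega-\gamma)T_j}$ and bounds $|G_n^\infty(\omega)|$ and $|P(\omega)|$ separately (the former by a polynomial $Q_l(|\omega_I|)$, the latter from below via $c_L\boldsymbol{\mu}_0\perm_+|\Imag\omega^2|$), whereas you divide through by $P(\omega)$ and bound the ratio $|G_n^\infty/P|\le C(1+|\omega|^4)$ \emph{uniformly in $n$}. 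Your route is cleaner on two counts: it makes the $n$-uniformity of the threshold $J$ explicit (the paper's $Q_l$ tacitly carries $n^2k^2$-dependent coefficients, so its constant $c(l)$ is not obviously $n$-independent), and your lower bound $|P(\omega)|\ge\boldsymbol{\mu}_0\perm_+(\gamma+\delta)^2$ via the real roots of $P$ avoids the paper's use of $|\Imag\omega^2|$, a quantity that vanishes on the imaginary axis inside $C_l\cap H_\delta$. The two-regime split in $|\omega|$ is a harmless variant of the paper's single estimate in $|\omega_I|$; once the $n$-uniform ratio bound is in hand, either concludes.
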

	
	\begin{proof}
		Eigenvalues of $\cL_{nk}^T$ outside the set $\Omega_0$ are given as zeros of $G_n(\cdot,T)$ in \eqref{E:dispRel-Lorenz}.				
		Since $T_j=j\frac{\pi}{c_*}$, we have $\sin (c_*T_j)=0$ and $\cos(c_* T_j)=(-1)^j$. By multiplying \eqref{E:dispRel-Lorenz} with $e^{-(\ri\omega -\gamma)T_j}$ we obtain the equation
		\beq \label{E:dispRel-Lorenz_mod}
		\begin{aligned}
			&\left( \left( (\perm_0^{-1}n^2k^2-\omega^2\boldsymbol{\mu}_0)\perm_+ +n^2k^2\right)\left( \omega^2 +2\ri \gamma\omega - \omega_*^2\right) - c_L(n^2k^2-\omega^2\boldsymbol{\mu}_0\perm_+)\right) e^{-(\ri\omega-\gamma )T_j}\\
			&\quad + c_L (-1)^j(n^2k^2-\omega^2\boldsymbol{\mu}_0\perm_+) =0
		\end{aligned}
		\eeq
		for an eigenvalue $\omega$. Let $\delta, l>0$. Choosing now $\omega\in C_l$, we may estimate the absolute value of each $\omega$ appearing in the first term of \eqref{E:dispRel-Lorenz_mod} by $(1+l)|\Imag (\omega )|$, such that
		$$\left|\left( \left( (\perm_0^{-1}n^2k^2-\omega^2\boldsymbol{\mu}_0)\perm_+ +n^2k^2\right)\left( \omega^2 +2\ri \gamma\omega - \omega_*^2\right) - c_L(n^2k^2-\omega^2\boldsymbol{\mu}_0\perm_+)\right) e^{-(\ri\omega-\gamma )T_j}\right| \leq Q_l(|\Imag (\omega )|) e^{(\Imag (\omega )+\gamma )T_j},$$
		with some polynomial $Q_l$. Clearly, for $\omega\in H_\delta$ the term $Q_l(|\Imag (\omega )|)e^{(\Imag (\omega )+\gamma )T_j/2}$ is bounded in $\Imag (\omega )$ by some constant $c(l)$ independent of $T_j$.
		
		We thus arrive at
		$$
		\begin{aligned}
			&\left|\left( \left( (\perm_0^{-1}n^2k^2-\omega^2\boldsymbol{\mu}_0)\perm_+ +n^2k^2\right)\left( \omega^2 +2\ri \gamma\omega - \omega_*^2\right) - c_L(n^2k^2-\omega^2\boldsymbol{\mu}_0\perm_+)\right) e^{-(\ri\omega-\gamma )T_j}\right|\\
			& \leq c(l) e^{(\Imag (\omega )+\gamma )T_j/2}\leq c(l) e^{-\delta T_j/4}
		\end{aligned}
		$$
		for all $\omega\in C_l\cap H_\delta$ if $j$ is large enough. Finally, there exists $\varepsilon >0$ such that
		 $$\left|c_L (-1)^j(n^2k^2-\omega^2\boldsymbol{\mu}_0 \perm_+)\right|\geq c_L\boldsymbol{\mu}_0 \perm_+|\Imag (\omega^2)|>\eps$$
         for all $\omega \in C_l\cap H_\delta$. Choosing now $j$ large enough, we get $c(l)e^{-\delta T_j/4}<\eps$ and hence \eqref{E:dispRel-Lorenz_mod} cannot hold inside $H_\delta\cap C_l$.
	\end{proof}
	
	To summarize, let $\omega^\infty_0 = \omega_{R,\infty} + \ri \omega_{I,\infty}$ be an eigenvalue of $\cL^\infty_{k}$ and $\omega_0=\omega_R+\ri\omega_I$ the eigenvalue of $\cL_{k}^T$ that lies on the curve through $\omega_0^\infty$ given by Lemma \ref{lem:evals_lorentz_close}.
	Lemma \ref{L:evals-deep-down} shows that, choosing in particular $l> |\omega_{R,\infty}/\omega_{I,\infty}|$, for  $T=j\frac{\pi}{c_*}$ with $j$ large enough, eigenvalues of $\cL_{nk}^T$ below the strip $S_\gamma$ lie either asymptotically (as $T\to\infty$) close to the line $\Imag(\omega)=-\gamma$ or they satisfy $|\Real (\omega )|> l|\Imag (\omega )|$. Since $\omega_0$ converges to $\omega_0^\infty$ for $T\to\infty$, also $l>|\omega_R /\omega_I|$ will be satisfied for large enough $T$. Hence, for such values of $T$ we have proved 
	$$
	\omega_0^{(n,\nu)} \notin \sigma_p(\cL_{nk,L}) \cap H_\delta \quad \forall (n,\nu)\in \mathbf{I}.
	$$
	Inside $S_\gamma$, where the eigenvalues of $\cL_{nk}^\infty$ lie, Lemma \ref{lem:evals_lorentz_close} shows that for $T$ large there are eigenvalues of $\cL_{nk}^T$ close to those of $\cL_{nk}^\infty$. Additional eigenvalues are possible but our winding number computations suggest that these lie either close to the line $\Imag(\omega)=-\gamma$ or have a large real part. In the latter case these eigenvalues do not intersect  the discrete cone $\mathbf{S}$. In the former case the intersection with $\mathbf{S}$ is also empty if 
	\beq \label{E:res-im-as}
	\nu\omega_{I,\infty} \neq -\gamma \quad \forall \nu \in \N.
	\eeq
	Indeed, the discreteness of $\mathbf{S}$ and Lemma \ref{lem:evals_lorentz_close} then imply
	$$
	d:=\dist\left(\mathbf{S},\left\{\omega\in \C:\Imag(\omega)=-\gamma\right\}\right)>0.
	$$
	
	Hence, relying on the winding number computations within $S_\gamma$, we expect 
	\beq\label{E:ptspec-inters}
	\omega_0^{(n,\nu)} \notin  \sigma_p(\cL_{nk}^T) \quad \forall (n,\nu)\in \mathbf{I}\setminus\{(1,1),(-1,1)\}
	\eeq
	if $T$ is large enough, if \eqref{E:res-im-as} holds, and if 
	\beq\label{E:pt-cpes-cond-untrunc}
	n\omega_{R,\infty}+\ri\nu \omega_{I,\infty} \notin \sigma_p(\cL_{nk,L}^\infty)
	\quad \forall (n,\nu)\in \mathbf{I}\setminus\{(1,1),(-1,1)\}.
	\eeq
	We note that assumption \eqref{E:pt-cpes-cond-untrunc} needs to be checked only for finitely many values of $(n,\nu)$ because  $\sigma(\cL_{\kappa}^\infty)\subset \overline{S_\gamma}$ for each $\kappa\in \R$. This can be done using computer algebra.
	
	A rigorous proof of \eqref{E:ptspec-inters} can be found in Lemma \ref{L:ptsp-belowgam}, which uses \eqref{E:res-im-as} and \eqref{E:pt-cpes-cond-untrunc} as assumptions.

	\section{Assumptions and Proof of Theorem \ref{T:Lor-example}}\label{sec:proof-Thm-2}
	In this section we study the existence of polychromatic solutions, as given by Theorem \ref{T:Lor-example}, for the interface problem with $\perm$ given by \eqref{E:perm-LT} and $\chi^{(2)}, \chi^{(3)}$ by \eqref{E:chi23-ex} and \eqref{E:chi23-TM}.	

	\subsection{Assumptions of Theorem \ref{T:Lor-example}}

	First, we introduce our assumptions \ref{ass:B-signOfOmega} - \ref{ass:B-rationality} used in Theorem \ref{T:Lor-example}. Mainly, they are assumptions on a selected eigenvalue $\omega_0^\infty = \omega_{R,\infty} +\ri  \omega_{I,\infty}$ of the untruncated operator $\cL_{k}^\infty$, but \ref{ass:B-rationality} is an assumption on a corresponding eigenvalue $\omega_0=\omega_R+\ri \omega_I$ of the truncated $\cL_{k}^T$ at some $T$ large.
	\begin{enumerate}[label=(B\arabic*),ref=(B\arabic*)]
		\item \label{ass:B-signOfOmega}
		$\omega_0^\infty= \omega_{R,\infty} +\ri  \omega_{I,\infty}$ is an eigenvalue of $\cL_{k}^\infty$ and a simple zero of $G_1^\infty$ with 
		$\omega_{I,\infty}<0$, $\omega_{R,\infty}>0$ and $|\omega_{R,\infty}|\neq|\omega_{I,\infty}|$.
		\item \label{ass:B-omIfarFromGamma}
		$\nu\omega_{I,\infty}+\ga\neq0$ for all $\nu\in\N$.
		\item \label{ass:B-Vmin}
		For each $(n,\nu)\in\mathbf{I}$ with $\nu<\frac{\ga}{|\omega_{I,\infty}|}$,
		\[V_\infty^{(a)}(n,\nu):=-c_0^{-2}\left(n\omega_{R,\infty}+\ri\nu\omega_{I,\infty}+\frac{c_L\left(\alpha_\infty+\ri\beta_\infty\right)}{\left(c_*^2+(\gamma+\nu\omega_{I,\infty})^2-n^2\omega_{R,\infty}^2\right)^2+4n^2\omega_{R,\infty}^2(\ga+\nu\omega_{I,\infty})^2}\right)\neq0, \]
		where
		\[\alpha_\infty:=n\omega_{R,\infty}\left(\omega_*^2-n^2\omega_{R,\infty}^2-\nu^2\omega_{I,\infty}^2\right),\quad \beta_\infty:=\left(n^2\omega_{R,\infty}^2+\nu^2\omega_{I,\infty}^2\right)\left(2\gamma+\nu\omega_{I,\infty}\right)+\omega_{*}^2\nu\omega_{I,\infty}.\]
		\item \label{ass:B-muMin}
		For each $(n,\nu)\in \mathbf{I}$ with $\nu<\frac{\ga}{|\omega_{I,\infty}|}$, \[\begin{split}
			\mu_{-,\infty}^2(n,\nu):=&n^2k^2-c_0^{-2}\left(n^2\omega_{R,\infty}^2-\nu^2\omega_{I,\infty}^2+2\ri n\omega_{R,\infty}\nu\omega_{I,\infty}\right)\\
			&-c_0^{-2}c_L\frac{\tilde{\alpha}_\infty+\ri\tilde{\beta}_\infty}{\left(c_*^2+(\ga+\nu\omega_{I,\infty})^2-n^2\omega_{R,\infty}^2\right)^2+4n^2\omega_{R,\infty}^2(\gamma+\nu\omega_{I,\infty})^2}\neq0,
		\end{split}\]
		where 
		$$
		\begin{aligned}
			\tilde{\alpha}_\infty&:=-(n^2\omega_{R,\infty}^2 + \nu^2\omega_{I,\infty}^2)(n^2\omega_{R,\infty}^2 + \nu^2\omega_{I,\infty}^2 + 2\gamma\nu\omega_{I,\infty}) - \omega_*^2(\nu^2\omega_{I,\infty}^2-n^2\omega_{R,\infty}^2),\\
			\tilde{\beta}_\infty&:=2n\omega_{R,\infty}\left[\gamma(n^2\omega_{R,\infty}^2 + \nu^2\omega_{I,\infty}^2) + \omega_*^2\nu \omega_{I,\infty}\right].
		\end{aligned}
		$$
		\item \label{ass:B-nonResonance}
		$n\omega_{R,\infty}+\ri\nu\omega_{I,\infty}\notin \sigma_p(\cL^\infty_{nk,L}) \ \forall (n,\nu )\in \mathbf{I}\setminus \{(1,1),(-1,1)\}$ with $\nu < \frac{\gamma}{|\omega_I|}$.
		\item \label{ass:B-formOfT}
		$T=T_j:=j\frac{\pi}{c_*}, j \in \N_\mathrm{odd}$.
		\item \label{ass:B-rationality}
		There exists $j_*\in \N_{\mathrm{odd}}, j_*\geq \max\{\lceil T_+\frac{c_*}{\pi}\rceil ,j_1,j_2,j_3,j_4\}$ with $T_+, j_1, j_2, j_3$, and $j_4$ from Lemmas \ref{bds_V_pm}, \ref{L:ptsp-belowgam}, \ref{L:j1-phi-beta}, \ref{bds_V_pm_L}, and \ref{bd_abs_mu_minu_L}, resp., such that the following holds. The eigenvalue $\omega_0 =\omega_R+\ri \omega_I$ of $\cL_{k}^T$ for the truncation $T=T_{j_*}=j_*\frac{\pi}{c_*},$ on the curve through $\omega_0^\infty$, given by Lemma \ref{lem:evals_lorentz_close}, satisfies
		\beq\label{E:omR-form}
		\frac{\omega_R}{c_*}=\frac{2m}{j_*}, \ \text{for some} \ m\in \Z.
		\eeq
	\end{enumerate}
	
	\brem
	Clearly, the eigenvalue $\omega_0$ of $\cL^T_{k}$ depends on the truncation parameter $T$ (which determines $\perm_-$). Hence (if $T=T_j$ is chosen as in \ref{ass:B-formOfT}) $\omega_0$ depends on $j$. Assumption \ref{ass:B-rationality} then prescribes a certain dependence between $\omega_R$ and $T=T_{j_*}$. This can be seen because 
	$$\frac{\omega_R}{c_*}=\frac{2m}{j_*} \ \Leftrightarrow \ \frac{\omega_R}{c_*}=\frac{2m\pi}{c_*T}  \ \Leftrightarrow  \ \frac{\omega_R}{\pi}=\frac{2m}{T}.$$
	Hence condition \eqref{E:omR-form} is equivalent to 
	$$\omega_R\in \frac \pi T \Z_{\mathrm{even}}.$$
	Note that because $T$ can be chosen arbitrarily large, also small values of $\omega_R$ are, in principle, possible.
	\erem	
	\brem   
	Assumptions \ref{ass:B-formOfT} and \ref{ass:B-rationality} are merely to simplify the susceptibility in \eqref{E:Lorentz-T-FT} in a way that lets us prove that the interface given by \eqref{E:perm-LT} satisfies \ref{ass:A-resolvEst}. 
	Namely, the sine and cosine terms and the factor $e^{\ri \omega_R T}$ in \eqref{E:Lorentz-T-FT} are replaced by $0$ or $\pm 1$. Assumption \ref{ass:B-formOfT} simply reduces the choice of the truncation parameter but assumption \ref{ass:B-rationality} is rather restrictive and probably impossible to be checked analytically or numerically.	
	We expect, though, that one can prove the validity of \ref{ass:A-resolvEst} for the interface \eqref{E:perm-LT}  also with generic values of $T$ and $\omega_R$.
	\erem

	In the following, we choose $\omega_0=\omega_R+\ri\omega_I\in \sigma_p(\cL_{k}^T)$ as in assumption \ref{ass:A-eval} to be again such that it lies on the curve through the eigenvalue $\omega_0^\infty$ as provided by Lemma \ref{lem:evals_lorentz_close}.
	
    For the spectrum and resolvent estimates it turns out to be crucial to control the quantities
	$$V_\pm(n,\nu):=-\omega_0^{(n,\nu)}\boldsymbol{\mu}_0\perm_\pm(\omega_0^{(n,\nu)}) \quad \text{and} \quad \mu_\pm(n,\nu):=\sqrt{n^2k^2+\omega_0^{(n,\nu)} V_\pm(n,\nu)}, \quad (n,\nu)\in \mathbf{I}.$$
	
	Note that we define for any complex number $z=|z|e^{\ri \varphi}\in\C$ with $\varphi \in (-\pi,\pi]$ the principal square root as
	$$\sqrt{z}:=|z|^{1/2}e^{\ri \varphi /2}.$$
	Hence, $\Real (\sqrt{z})\geq 0$ always holds. In particular, $\Real (\mu_\pm)\geq 0.$
	
	It is important to realize that $\omega_0,V_\pm$, and $\mu_\pm$ depend on the truncation parameter $T$.  For $\omega_0$, $V_-$ and $\mu_-$ this fact is immediate from their definition and the $T$-dependence of $V_+$ and $\mu_+$ is inherited from their dependence on $\omega_0$.

\medskip

We proceed with the proof of  Theorem \ref{T:Lor-example}. The proof consists of verifying that assumptions 	 \ref{ass:A-cpctSupport}--\ref{ass:A-resolvEst} of Theorem \ref{T:main} hold for the selected interface problem under assumptions \ref{ass:B-signOfOmega} - \ref{ass:B-rationality}. 
Assumptions \ref{ass:A-cpctSupport}-\ref{ass:A-periodic} have already been verified in Proposition \ref{verify_A1-A4} and the existence of eigenvalues for $\cL_{k}^T$, i.e., \ref{ass:A-eval}, is guaranteed by the discussion in Section \ref{S:spec-interf}. It thus remains to only check assumptions \ref{ass:A-resolvent} and \ref{ass:A-resolvEst}.
	
	\subsection{Proof of Theorem \ref{T:Lor-example}: Estimates for a Non-dispersive Material}
	We first consider the right side of the interface, i.e., the non-dispersive layer in $x>0$. Here the permittivity is given by $\perm_+(\omega)=\perm_0(1+\alpha)>0$, see \eqref{E:perm-LT}. We have the following lower and upper bounds on $V_+$ and $\mu_+$.
	\begin{lemma}\label{bds_V_pm}
		There are constants $T_+$, $c_1,c_2, a_+,b_+,c_+$ and $d_+$ such that
		\begin{subequations}\label{est-abs_V_pm}
			\beq\label{est-abs_V_plus}
			c_1\nu\leq\left|V_+(n,\nu)\right|\leq c_2\nu \\
			\eeq
			\beq\label{est-re_mu_plus}
			a_+ \nu \leq \Real\mu_+(n,\nu) \leq b_+ \nu
			\eeq
			\beq\label{est-abs-mu-plus}
			c_+\nu \leq |\mu_+(n,\nu)|\leq d_+\nu
			\eeq
		\end{subequations}
		for all $T>T_+$ and $(n,\nu)\in  \mathbf{I}=\{(n,\nu)\in \Z \times \N: |n|\leq \nu\}.$
	\end{lemma}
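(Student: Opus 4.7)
The plan is to rescale by $\nu$ and exploit the constraint $|n|\le\nu$. Writing $s:=n/\nu\in[-1,1]$ and $\omega_0^{(n,\nu)}=\nu(s\omega_R+\ri\omega_I)$, one computes
\[
\frac{V_+(n,\nu)}{\nu}=-(s\omega_R+\ri\omega_I)\boldsymbol{\mu}_0\perm_+,\qquad \frac{\mu_+(n,\nu)^2}{\nu^2}=g(s)\;:=\;s^2k^2-c_0^{-2}(1+\alpha)(s\omega_R+\ri\omega_I)^2,
\]
so both quantities to be estimated are continuous functions of $s$ on the compact interval $[-1,1]$, with parameters $(\omega_R,\omega_I)$ depending on $T$. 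By Lemma \ref{lem:evals_lorentz_close} the eigenvalue $\omega_0=\omega_R+\ri\omega_I$ of $\cL_k^T$ converges to $\omega_0^\infty=\omega_{R,\infty}+\ri\omega_{I,\infty}$ as $T\to\infty$; therefore, for $T\ge T_+$ sufficiently large, the pair $(\omega_R,\omega_I)$ lies in a fixed small neighborhood of $(\omega_{R,\infty},\omega_{I,\infty})$ inside which both coordinates retain the signs prescribed by \ref{ass:B-signOfOmega} (in particular, are bounded away from $0$).

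With this setup, the upper bounds in \eqref{est-abs_V_plus}--\eqref{est-abs-mu-plus} follow at once from the uniform boundedness of the continuous functions above on $[-1,1]$, and the upper bound in \eqref{est-re_mu_plus} from $\Real\mu_+\le|\mu_+|$. The lower bound on $|V_+|$ is equally direct: $|V_+(n,\nu)|\ge\boldsymbol{\mu}_0\perm_+\,\nu|\omega_I|$ produces $c_1$. The lower bound on $|\mu_+|$ reduces to showing $\min_{s\in[-1,1]}|g(s)|>0$ uniformly in $T\ge T_+$. Here one uses that $\Imag g(s)=-2c_0^{-2}(1+\alpha)s\omega_R\omega_I$ vanishes only at $s=0$ (since $\omega_R\omega_I\ne 0$ by \ref{ass:B-signOfOmega}), while $g(0)=c_0^{-2}(1+\alpha)\omega_I^2>0$. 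Compactness of $[-1,1]$ and continuity of $g$ in $(s,\omega_R,\omega_I)$ deliver a uniform positive lower bound, producing $c_+$.

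It remains to establish $\Real\mu_+(n,\nu)\ge a_+\nu$. Writing the principal square root as $\sqrt{g(s)}=u(s)+\ri v(s)$ with $u(s)\ge 0$, one has $u(s)=0$ precisely when $g(s)\in(-\infty,0]$, i.e., when $\Imag g(s)=0$ and $\Real g(s)\le 0$. The same dichotomy as above forces $s=0$, but $g(0)>0$ contradicts $\Real g(0)\le 0$. Hence $u(s)>0$ on $[-1,1]$; since the principal square root is continuous off $(-\infty,0]$, both $g$ and $u$ depend continuously on $(s,\omega_R,\omega_I)$ on this compact set. Compactness and the uniform-in-$T$ location of $(\omega_R,\omega_I)$ then give $\min_{s\in[-1,1]}u(s)\ge a_+>0$ for all $T\ge T_+$. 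Multiplying by $\nu$ yields $\Real\mu_+(n,\nu)=\nu\,u(n/\nu)\ge a_+\nu$, completing the estimate. The only mild technical point is the uniformity in $T$, which is supplied by continuity in $(\omega_R,\omega_I)$ together with Lemma \ref{lem:evals_lorentz_close}; everything else reduces to the elementary analysis of $g(0)$ and of $\Imag g$.
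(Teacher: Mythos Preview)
Your proof is correct. Both your argument and the paper's rest on the same key observation: writing $\mu_+^2/\nu^2$ as a function of $s=n/\nu$ (with $\tau^2=c_0^{-2}(1+\alpha)$), one has $\Imag g(s)=-2\tau^2 s\omega_R\omega_I$, which vanishes only at $s=0$, while $g(0)=\tau^2\omega_I^2>0$; hence $g$ avoids $(-\infty,0]$ and $\Real\sqrt{g}$ is bounded below. The difference is purely in execution. The paper argues via an explicit polar decomposition $\mu_+^2=r_+e^{\ri\theta_+}$ and a three-case analysis (on the sign of $k^2-\tau^2\omega_R^2$ and the size of $n^2$ relative to $\nu^2$) to bound $\theta_+$ away from $\pm\pi$, producing explicit constants in terms of $\omega_{R,\infty},\omega_{I,\infty}$. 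Your version replaces this by a single compactness argument on $[-1,1]\times K$ (with $K$ a neighborhood of $(\omega_{R,\infty},\omega_{I,\infty})$), using continuity of the principal square root off $(-\infty,0]$. Your route is shorter and avoids the case split; the paper's yields more explicit constants. Both handle the $T$-uniformity identically, via Lemma~\ref{lem:evals_lorentz_close}.
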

	\begin{proof}
		Let us denote by $$c_0:=1/\sqrt{\perm_0\mu_0}$$ 
		the speed of light in vacuum. We also define $\tau^2:=c_0^{-2}(1+\alpha)$.
		
		Clearly, for the constants $\tilde{c}_1=\tau^2|\omega_I|>0$, $\tilde{c}_2=\tau^2\sqrt{\omega_R^2 +\omega_I^2}>0$ one has
		$$\tilde{c}_1\nu\leq \left|V_+(n,\nu)\right|=\tau^2\left|n\omega_R+\mathrm{i}\nu\omega_I\right|=\tau^2\sqrt{n^2\omega_R^2+\nu^2\omega_I^2}\leq \tilde{c}_2\nu,\quad \forall \ \nu\in\mathbb{N},\ |n|\leq \nu.$$
		Then, due to the convergence $\omega_0\to \omega_0^\infty$, $T\to\infty$, we obtain the bounds $\tilde{c_1}\geq \frac{1}{2}\tau^2|\omega_{\infty,I}|=:c_1$ and $\tilde{c_2}\leq 2\tau^2\sqrt{\omega_{\infty ,R}^2 +\omega_{\infty ,I}^2} =:c_2$ for all $T>T_0$. These $T$-independent constants conclude estimate \eqref{est-abs_V_plus}.
		
		Next, we have
		\beq\label{mu-plus}
		\mu_+(n,\nu) = \sqrt{n^2k^2 - \tau^2 (n\omega_R+\ri \nu \omega_I)^2} = r_+^{1/2}e^{\ri \frac{\theta_+}{2}},
		\eeq
		where $r_+=|\mu_+|^2=\sqrt{(n^2(k^2-\tau^2\omega_R^2) + \nu^2\tau^2\omega_I^2)^2 +4n^2\nu^2\tau^4\omega_R^2\omega_I^2}$ and $\theta_+ = \text{Arg}(n^2k^2 - \tau^2 (n\omega_R+\ri \nu \omega_I)^2) \in (-\pi,\pi]$.
		
		To estimate $\Real \mu_+ := r_+^{1/2} \cos \left( \frac{\theta_+}{2}\right)$ we give upper and lower bounds on $|\mu_+|$ and show that $\theta_+\in (-\pi +\delta ,\pi -\delta )$ for some fixed $\delta >0$, as this assures that $\cos \left( \frac{\theta_+}{2}\right)$ is bounded away from zero.
		
		A simple calculation produces
		$$r_+=(n^4(k^2-\tau^2\omega_R^2)^2 +2n^2\nu^2\tau^2\omega_I^2(k^2+\tau^2\omega_R^2)+\nu^4\tau^4\omega_I^4)^{1/2}.$$
		Thus, for all $(n,\nu)\in \mathbf{I}$,
		$$r_+ \in (\tilde{c}_+^2 \nu^2,\  \tilde{d}_+^2\nu^2),$$
		where $\tilde{c}_+^2:=\tau^2\omega_I^2$ and $\tilde{d}_+^2:=((k^2-\tau^2\omega_R^2)^2+2\tau^2\omega_I^2(k^2+\tau^2\omega_R^2)+\tau^4\omega_I^4)^{1/2}$. Since both quantities depend continuously on $\omega_R,\omega_I$, we can again employ the convergence of $\omega_0$ to $\omega_0^\infty$ and thus set 
		$$c_+^2:=\frac{1}{2}\tau^2\omega_{\infty ,I}^2 ~\text{   and   } ~d_+^2:=2((k^2-\tau^2\omega_{\infty ,R}^2)^2+2\tau^2\omega_{\infty ,I}^2(k^2+\tau^2\omega_{\infty ,R}^2)+\tau^4\omega_{\infty ,I}^4)^{1/2}.$$ Hence, for all $T$ large enough
		\beq\label{bd_abs_mu_plus}c_+\nu\leq \left|\mu_+\right|\leq d_+\nu.\eeq
		For the analogous estimates of $\Real \mu_+$ it remains to show that $\cos\left(\frac{\theta_+}{2}\right)\geq c>0.$
		If $k^2\geq \tau^2\omega_R^2$, then it follows that
		$\Real\mu_+^2=n^2(k^2-\tau^2\omega_R^2) + \nu^2\tau^2\omega_I^2\geq c_I\nu^2 \geq 0$
		for all $(n,\nu)\in \mathbf{I}$,
		where $c_I:=\tau^2\omega_I^2>0$,  hence $\theta_+\in\left[-\frac{\pi}{2},\frac{\pi}{2}\right]$, hereby $\cos\left(\frac{\theta_+}{2}\right)\geq c>0$.
		
		If  $k^2< \tau^2\omega_R^2$ with $n^2\geq \frac{\tau^2\omega_I^2}{\tau^2\omega_R^2-k^2}\nu^2$ and $|n|\leq\nu$, then it follows that $-\tilde{c}_0\nu^2\leq\Real\mu_+^2\leq0,$
		where $\tilde{c}_0:=\tau^2\omega_R^2-k^2>0.$ In addition, we observe
		$\left|\Imag\mu_+^2\right|\geq \tilde{c}_1\nu^2$ with $\tilde{c}_1:=\frac{2\tau^3\omega_I^2|\omega_R|}{\sqrt{\tau^2\omega_R^2-k^2}}$. Hence $$\frac{\left|\Imag \mu_+^2\right|}{\left|\Real \mu_+^2\right|}\geq\frac{\tilde{c}_1}{\tilde{c}_0}>0$$
		for all $\nu\in\N$ and $|n|\leq\nu$ with $n^2\geq \frac{\tau^2\omega_I^2}{\tau^2\omega_R^2-k^2}\nu^2$. This implies $\theta_+\in [-\pi+\delta, \pi-\delta]$ with $\delta =\arctan\left(\frac{\tilde{c}_1}{\tilde{c}_0}\right)>0$, and thus  $\cos\left(\frac{\theta_+}{2}\right)\geq c>0$.
		
		Finally, if  $k^2< \tau^2\omega_R^2$ with $n^2< \frac{\tau^2\omega_I^2}{\tau^2\omega_R^2-k^2}\nu^2$ and $|n|\leq\nu$, then it follows that $\Real\mu_+^2>0$, hence $\theta_+\in\left(-\frac{\pi}{2},\frac{\pi}{2}\right)$, hereby $\cos\left(\frac{\theta_+}{2}\right)\geq c>0$. Again, in all previous estimates $\omega_R,\omega_I$ can be replaced 
		by $\omega_{\infty ,R},\omega_{\infty ,I}$, scaling the occurring constants by $\frac{1}{2}$ or $2$. Hence, $\cos\left( \frac{\theta_+}{2}\right)$ is bounded away from $0$ uniformly in $T$ and \eqref{est-re_mu_plus} holds.
	\end{proof}

    \subsection{Proof of Theorem \ref{T:Lor-example}: Estimates for a Lorentz Material with Memory Truncation}
	
	Next we study the material in \eqref{E:perm-LT} in $x<0$ described by the Lorentz model with the memory truncation and
	obtain lower and upper bounds for $V_-$,  $|\mu_-|$, and $\Real (\mu_-)$.
	For $\nu <\gamma/|\omega_I|$ we bound these quantities analogously to Lemma \ref{bds_V_pm}. For $\nu > \gamma/|\omega_I|$, i.e., for large $\nu$, we show that $V_-$, $|\mu_-|$, and  $\Real(\mu_-)$ grow exponentially fast in $\nu$ (and in $T$).
	We are able to provide such bounds only under assumptions \ref{ass:B-signOfOmega} - \ref{ass:B-formOfT}.
	
	Let us recall the definitions $c_0=1/\sqrt{\perm_0\boldsymbol{\mu}_0}$ and $c_*=\sqrt{\omega_*^2-\gamma^2}$ and introduce the concept of asymptotic equivalence. For two terms $A(t)$ and $B(t)$ we denote their asymptotic equivalence by the $\sim$ symbol in the sense that 
	$$ A(t) \sim B(t)\, (t\to t_0)~~\Leftrightarrow ~~ \lim_{t\to t_0} \frac{A(t)}{B(t)}=1.$$

	\blem\label{L:j0}
	Assume \ref{ass:B-signOfOmega}--\ref{ass:B-muMin}. Then there exists $T_0>0$ such that assumptions \ref{ass:B-signOfOmega}--\ref{ass:B-muMin} hold with $\omega_0^\infty=\omega_{R,\infty}+\ri \omega_{I,\infty}$ replaced by $\omega_0=\omega_R+\ri\omega_I$, where $\omega_0$ is the eigenvalue for any truncation $T\geq T_0$ on the curve through $\omega_0^\infty$ given by Lemma \ref{lem:evals_lorentz_close}. Also the sets $\{(n,\nu)\in\mathbf{I}: \nu<\gamma/|\omega_{I,\infty}|\}$ and $\{(n,\nu)\in\mathbf{I}: \nu<\gamma/|\omega_I|\}$ are identical for $T\geq T_0$.
	\elem
	\bpf
	Lemma \ref{lem:evals_lorentz_close} guarantees that for any $\delta>0$ and $T=T(\delta)$ large enough there is an eigenvalue $\omega_0$ in the $\delta$-vicinity of $\omega_0^\infty$ and $\omega_0(T)\to \omega_0^\infty$ as $T\to\infty$ (after reparametrising the curve from Lemma \ref{lem:evals_lorentz_close} by $T$ instead of $1/T$). This and the fact that all quantities in \ref{ass:B-signOfOmega}--\ref{ass:B-muMin} are continuous in $\omega_R$ and $\omega_I$ in a vicinity of $\omega_0^\infty$ imply the statement.
	\epf
	
    In the proofs of the remaining statements the sets 
        \beq
            \label{eqn:setsIaIb}
            \mathbf{I}_a:=\left\{(n,\nu)\in \mathbf{I}:\ \nu <\frac{\gamma}{|\omega_{I,\infty}|}\right\} \quad \text{and} \quad \mathbf{I}_b:=\left\{(n,\nu)\in \mathbf{I}:\ \nu >\frac{\gamma}{|\omega_{I,\infty}|}\right\}
        \eeq
    are used often, because many of the quantities appearing include the factor $e^{(\nu\omega_I +\gamma)T}$, which behaves differently depending on whether $(n,\nu)\in \mathbf{I}_a$ or $(n,\nu)\in \mathbf{I}_b$. Due to assumption \ref{ass:B-omIfarFromGamma} we get $\mathbf{I}=\mathbf{I}_a\cup\mathbf{I}_b$. Clearly, the set $\mathbf{I}_a$ is finite, while $\mathbf{I}_b$ is infinite.

	\blem\label{L:ptsp-belowgam}
	Let $\omega_0$ and $\omega_0^\infty$ be connected  by Lemma \ref{lem:evals_lorentz_close}. Assume \ref{ass:B-signOfOmega}, \ref{ass:B-omIfarFromGamma}, \ref{ass:B-nonResonance}, and \ref{ass:B-formOfT}.  Then there exists $j_1\in \N_{\mathrm{odd}}$ such that
	$$\omega_0^{(n,\nu)}\notin \sigma_p(\cL_{nk}^T)\setminus \Omega_0 \quad \forall (n,\nu)\in \mathbf{I}\setminus  \{(1,1),(-1,1)\}  $$
	if $j\geq j_1$.
	\elem
	\bpf
	First we choose $j$ in \ref{ass:B-formOfT} so large ($j\geq j_0$ with some $j_0\in \N$) that \ref{ass:B-signOfOmega}
	and \ref{ass:B-omIfarFromGamma} hold with $\omega_0$ instead of $\omega_0^\infty$, i.e., $j$ so large, that $T_j>T_0$ from Lemma \ref{L:j0}. 
	
	Let
	$$d:=\dist\left(\{n\omega_{R,\infty}+\ri \nu\omega_{I,\infty}:(n,\nu)\in \mathbf{I} \}, \{\Imag(\omega)=-\gamma\}\right).$$
	Clearly $d=\min_{\nu \in\N}|\nu \omega_{I,\infty}+\gamma|$. By \ref{ass:B-omIfarFromGamma} we know that $d>0$. By Lemma \ref{lem:evals_lorentz_close} there is $j_d\in \N_{\mathrm{odd}}$ such that if $T=T_j$ with $j\geq j_d$, then 
	$$\dist\left(\{\omega_{0}^{(n,\nu)}:(n,\nu)\in \mathbf{I} \}, \{\Imag(\omega)=-\gamma\}\right)>\frac d2.$$
	Hence, for $T=T_j$ with $j\geq j_d$ we have $\omega_{0}^{(n,\nu)} \notin \{\Imag(\omega)\in[-\gamma-\tfrac{d}{2},-\gamma]\}$ and choosing in Lemma \ref{L:evals-deep-down} $J=J(l=\frac{\omega_R}{|\omega_I|}+1,\delta=\tfrac d2)$, we get $\omega_{0}^{(n,\nu)} \notin \sigma_p(\cL_{nk}^T)\cap \{\Imag(\omega)\leq -\gamma-\tfrac d2\}\setminus\Omega_0$ for $j\geq J$. This shows that $\omega_0^{(n,\nu)} \notin \sigma_p(\cL_{nk}^T)\setminus \Omega_0$ for all $(n,\nu) \in \mathbf{I}_b$ if $T=T_j, j \geq \max\{j_0, j_d, J\}$.
	
	Let us now show  that 
	\beq\label{E:pt-spec-above-gam}
	\omega_{0}^{(n,\nu)} \notin \sigma_p(\cL_{nk}^T)\setminus\Omega_0 \ \forall (n,\nu)\in\mathbf{I}_a \setminus \{(1,1), (-1,1)\}
	\eeq 
	for $j$ large enough.
	
	Recall the definition 
	\[\begin{split}
		G_n(\omega,T):= \left((\perm_0^{-1}n^2k^2-\omega^2\boldsymbol{\mu}_0)\perm_+ +n^2k^2\right)\left( \omega^2 +2\ri \gamma\omega - \omega_*^2\right) - c_L(n^2k^2-\omega^2\boldsymbol{\mu}_0\perm_+) \\
		-c_L e^{(\ri\omega-\gamma )T}(n^2k^2-\omega^2\boldsymbol{\mu}_0\perm_+)\left( \frac{\ri\omega -\gamma}{c_*}\sin (c_*T)-\cos(c_*T)\right)
	\end{split}\]
	from \eqref{E:dispRel-Lorenz}, which is a reformulation of the dispersion relation of $\cL_{nk}^T$, i.e., $\omega_0^{(n,\nu)}\in \sigma_p(\cL_{nk}^T)\setminus \Omega_0$ if and only if $G_n(\omega_0^{(n,\nu)},T)=0$. Further, recall that for $\omega\in \{\Imag (\omega)>-\gamma\}$
	$$G^\infty_n(\omega ):=\lim_{T\to\infty}G_n(\omega ,T)=\left((\perm_0^{-1}n^2k^2-\omega^2\boldsymbol{\mu}_0)\perm_+ +n^2k^2\right)\left( \omega^2 +2\ri \gamma\omega - \omega_*^2\right) - c_L(n^2k^2-\omega^2\boldsymbol{\mu}_0\perm_+)=0$$
	is the dispersion relation for $\cL^\infty_{nk}$. From \ref{ass:B-nonResonance} we obtain $G^\infty_n\left(n\omega_{R,\infty} +\ri\nu\omega_{I,\infty}\right)\neq 0$ for all $(n,\nu)\in \mathbf{I}_a\setminus \{(1,1),(-1,1)\}$.
	
	Fix now some $(n,\nu)\in \mathbf{I}_a\setminus \{(1,1),(-1,1)\}$. By continuity of $G^\infty_n$ and Lemma \ref{lem:evals_lorentz_close} there is $j'(n,\nu)\in\N$ such that $G^\infty_n(\omega_0^{(n,\nu )})\neq 0$ for all $j>j'(n,\nu)$. 
	Now, since $G_n^\infty (\omega_0^{(n,\nu)})=\lim_{j\to\infty}G_n(\omega_0^{(n,\nu )},T_j)$, also $G_n(\omega_0^{(n,\infty )},T_j)\neq 0$ for all $j>j_0(n,\nu )\geq j'(n,\nu)$.
	Using the finiteness of the set 
	$\mathbf{I}_a$, we get \eqref{E:pt-spec-above-gam} for $j\geq J'$, where
	$$J':=\max\left\{j'(n,\nu ):~(n,\nu)\in\mathbf{I}_a\setminus \{(1,1),(-1,1)\} \right\}.$$
	
	Finally, we choose $j_1:=\max\{j_0,j_d,J,J'\}$.
	\epf
	
	\blem\label{L:j1-phi-beta}
	Assume \ref{ass:B-signOfOmega}, \ref{ass:B-omIfarFromGamma}, and \ref{ass:B-formOfT} and let  $\omega_0=\omega_R+\ri\omega_I$ be the eigenvalue for the truncation $T=T_j$ on the curve through $\omega_0^\infty$ given by Lemma \ref{lem:evals_lorentz_close}.
	Define
	\[\tilde{\beta}(n,\nu):=2n\omega_R\left(\gamma\left(n^2\omega_R^2+\nu^2\omega_I^2\right)+\omega_*^2\nu\omega_I\right)\]
	and
	\[\begin{split}     \varphi(n,\nu)&:=-\tilde{\beta}(n,\nu) -2\frac{n\omega_{R}\nu\omega_{I}}{c_L}\left[(c_*^2+(\gamma+\nu\omega_{I})^2-n^2\omega_{R}^2)^2+4n^2\omega_{R}^2(\ga+\nu\omega_{I})^2\right].
	\end{split}\]
	Then there exists $j_2\in \N_{\mathrm{odd}}$ such that 
	\beq\label{E:phi-beta}
	|\tilde{\beta}(n,\nu)|< |\varphi(n,\nu)|e^{(\nu\omega_I+\gamma)T_j} \quad \forall j\geq j_2, (n,\nu)\in \{(n,\nu)\in \mathbf{I}_a: \varphi(n,\nu)\neq 0\}
	\eeq
	and
	\beq\label{E:phi-beta2}
	|\tilde{\beta}(n,\nu)|> |\varphi(n,\nu)|e^{(\nu\omega_I+\gamma)T_j} \quad \forall j\geq j_2, (n,\nu)\in \{(n,\nu)\in \mathbf{I}_b: \tilde{\beta}(n,\nu)\neq 0\}.
	\eeq
	\elem
	\bpf
	Again, we start by employing  Lemma \ref{L:j0} to choose $j$ in \ref{ass:B-formOfT} so large ($j\geq j_0$ with some $j_0\in \N$) that \ref{ass:B-signOfOmega}
	and \ref{ass:B-omIfarFromGamma} hold with $\omega_0$ instead of $\omega_0^\infty$.  
	
	Next, recall that $\mathbf{I}_a$ is finite while $\mathbf{I}_b$ is infinite.
	
	For \eqref{E:phi-beta} we use that $e^{(\nu\omega_I+\gamma)T_j}\to \infty$ as $j\to\infty$ if $(n,\nu)\in \mathbf{I}_a$. For each $(n,\nu)\in \{(n,\nu)\in \mathbf{I}_a: \varphi(n,\nu)\neq 0\}$ we define $j'(n,\nu):=\min \{\tilde{j}\in \N_{\mathrm{odd}}:|\tilde{\beta}(n,\nu)|< |\varphi(n,\nu)|e^{(\nu\omega_I+\gamma)T_j} \quad \forall j\geq \tilde{j}\}$
	and
	$$j_a:=\max_{(n,\nu)\in \mathbf{I}_a, \varphi(n,\nu)\neq 0} j'(n,\nu).$$
	
	For \eqref{E:phi-beta2} we use that $e^{(\nu\omega_I+\gamma)T_j}\to 0$ as $j\to\infty$ if $(n,\nu)\in \mathbf{I}_b$. 
	For each $(n,\nu)\in \{(n,\nu)\in \mathbf{I}_b: \tilde{\beta}(n,\nu)\neq 0\}$ we define $j''(n,\nu):=\min \{\tilde{j}\in \N_{\mathrm{odd}}:|\tilde{\beta}(n,\nu)|> |\varphi(n,\nu)|e^{(\nu\omega_I+\gamma)T_j} \quad \forall j\geq \tilde{j}\}$
	and
	$$j_b:=\sup_{(n,\nu)\in \mathbf{I}_b, \tilde{\beta}(n,\nu)\neq 0} j''(n,\nu).$$
	We have $j_b<\infty$ because $j''$ is bounded in $\nu$, which follows from the exponential decay in $\nu$ on the right hand side and the growth on the left hand side of the inequality in the definition of $j''$.
	
	The index $j_2:=\max\{j_0,j_a,j_b\}$ satisfies the lemma.
	\epf
	
	\begin{lemma}\label{bds_V_pm_L}
		Assume \ref{ass:B-signOfOmega}-- \ref{ass:B-Vmin}, and \ref{ass:B-formOfT} and let  $\omega_0=\omega_R+\ri\omega_I$ be the eigenvalue for the truncation $T=T_j$ on the curve through $\omega_0^\infty$ given by Lemma \ref{lem:evals_lorentz_close}.
		Then there exist constants $j_3\in\mathbb{N}_\mathrm{odd}$ and $c_1$, $c_2$, $c_3$, $c_4>0$ such that if $j\geq j_3$, then 
		\beq\label{E:est-Vm.nu-small}
		c_1\leq |V_-(n,\nu)|\leq c_2
		\eeq
		holds for  all $(n,\nu) \in \mathbf{I}_a$ and
        \beq\label{est-abs_V_min_L}
		c_3\nu^{-1}e^{-(\nu\omega_I+\gamma)T_j}\leq\left|V_-(n,\nu)\right|\leq c_4\nu^{-1}e^{-(\nu\omega_I+\gamma)T_j}
		\eeq
		for all $(n,\nu) \in \mathbf{I}_b$.
	\end{lemma}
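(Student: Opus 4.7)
The plan is to split $\mathbf{I}=\mathbf{I}_a\cup\mathbf{I}_b$ as in \eqref{eqn:setsIaIb} and treat the two pieces separately. I first invoke Lemma \ref{L:j0} so that, for $j$ large, all quantities defined via $\omega_0^\infty$ translate to the corresponding ones at $\omega_0=\omega_0(T_j)$ and the two versions of $\mathbf{I}_{a,b}$ coincide. Assumption \ref{ass:B-formOfT} ensures $\sin(c_*T_j)=0$ and $\cos(c_*T_j)=-1$, so the bracket in \eqref{E:Lorentz-T-FT} collapses to $1+e^{(\ri\omega-\gamma)T_j}$ and
\[V_-(n,\nu) = -c_0^{-2}\omega + \frac{c_0^{-2}c_L\omega\bigl(1+e^{(\ri\omega-\gamma)T_j}\bigr)}{\omega^2+2\ri\gamma\omega-\omega_*^2}, \qquad \omega:=\omega_0^{(n,\nu)}.\]
The key observation is that $|e^{(\ri\omega-\gamma)T_j}|=e^{-(\nu\omega_I+\gamma)T_j}$ tends to zero on $\mathbf{I}_a$ but grows without bound on $\mathbf{I}_b$ as $j\to\infty$, leading to the qualitatively different estimates \eqref{E:est-Vm.nu-small} and \eqref{est-abs_V_min_L}.

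On the finite set $\mathbf{I}_a$ the exponential vanishes in the limit, and a direct algebraic identification of the real and imaginary parts of $\omega/(\omega^2+2\ri\gamma\omega-\omega_*^2)$ with the expressions $\alpha_\infty,\beta_\infty$ of \ref{ass:B-Vmin} gives $V_-(n,\nu)(T_j)\to V_\infty^{(a)}(n,\nu)\neq 0$. Continuity of $T_j\mapsto V_-(n,\nu)$ together with the finiteness of $\mathbf{I}_a$ then yields \eqref{E:est-Vm.nu-small} with uniform constants $c_1,c_2$ for all $j$ above some threshold $j_3^{(a)}$.

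For $(n,\nu)\in \mathbf{I}_b$ I single out the dominant contribution $D(\omega,T_j):=\frac{c_0^{-2}c_L\omega\, e^{(\ri\omega-\gamma)T_j}}{\omega^2+2\ri\gamma\omega-\omega_*^2}$. The uniform two-sided estimate $|\omega|\asymp \nu$ is immediate from $\nu|\omega_I|\le|\omega|\le \nu\sqrt{\omega_R^2+\omega_I^2}$, and the factorisation $\omega^2+2\ri\gamma\omega-\omega_*^2=(\omega+\ri\gamma-c_*)(\omega+\ri\gamma+c_*)$ together with $|\omega+\ri\gamma\pm c_*|\ge |\Imag\omega+\gamma|=\nu|\omega_I|-\gamma\ge \delta_0>0$ on $\mathbf{I}_b$, with $\delta_0>0$ supplied by \ref{ass:B-omIfarFromGamma}, produces $|\omega^2+2\ri\gamma\omega-\omega_*^2|\asymp \nu^2$ uniformly on $\mathbf{I}_b$. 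Hence $|D|\asymp \nu^{-1}e^{-(\nu\omega_I+\gamma)T_j}$, while the two remaining summands $-c_0^{-2}\omega$ and $c_0^{-2}c_L\omega/(\omega^2+2\ri\gamma\omega-\omega_*^2)$ are of respective sizes $O(\nu)$ and $O(\nu^{-1})$.

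The main obstacle is to verify that the $O(\nu)$ remainder is strictly dominated by $|D|$ uniformly on $\mathbf{I}_b$, an issue because both $\nu$ near $\nu_0:=\min \mathbf{I}_b$ (where the exponent $-(\nu\omega_I+\gamma)T_j$ is smallest) and $\nu\to\infty$ (where the prefactor $\nu$ is largest) threaten the inequality. The relevant quotient is controlled by $g(\nu,T_j):=\nu^2 e^{-(\nu|\omega_I|-\gamma)T_j}$. A one-line derivative computation gives $\partial_\nu g=\nu e^{-(\nu|\omega_I|-\gamma)T_j}(2-\nu|\omega_I|T_j)$, so $g(\cdot,T_j)$ is monotone decreasing on $[\nu_0,\infty)$ as soon as $T_j>2/(|\omega_I|\nu_0)$. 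Consequently $\sup_{\mathbf{I}_b} g(\cdot,T_j)=g(\nu_0,T_j)\to 0$ as $j\to\infty$ because $\nu_0|\omega_I|-\gamma\ge \delta_0>0$. Taking $j$ so large that this supremum is smaller than a suitable universal constant and then applying the reverse triangle inequality yields both the upper and the lower bound in \eqref{est-abs_V_min_L} with uniform constants $c_3,c_4$. Setting $j_3$ to be the maximum of $j_3^{(a)}$ and the $\mathbf{I}_b$-threshold just obtained finishes the argument.
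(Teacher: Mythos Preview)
Your argument is correct and follows the same global structure as the paper (invoke Lemma~\ref{L:j0}, use \ref{ass:B-formOfT} to collapse the bracket in \eqref{E:Lorentz-T-FT}, split into $\mathbf{I}_a$ and $\mathbf{I}_b$, use \ref{ass:B-Vmin} plus finiteness on $\mathbf{I}_a$, isolate the exponentially large piece on $\mathbf{I}_b$). The treatment on $\mathbf{I}_b$, however, is organised differently.

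The paper expands $V_-$ in explicit real and imaginary parts, writing the quotient as $(\alpha+\ri\beta)/d$ with $d=|\omega^2+2\ri\gamma\omega-\omega_*^2|^2$, and then establishes $|\alpha+\ri\beta|\asymp\nu^3$ and $d\asymp\nu^4$ for large $\nu$; for the finitely many small $\nu\in\mathbf{I}_b$ it still needs a separate case analysis to rule out $\alpha=\beta=0$ (using \ref{ass:B-omIfarFromGamma} and $\omega_*>\gamma$). Your route bypasses this by factorising the Lorentz denominator as $(\omega+\ri\gamma-c_*)(\omega+\ri\gamma+c_*)$ and bounding each factor below by $|\Imag\omega+\gamma|=\nu|\omega_I|-\gamma$, which is uniformly positive on $\mathbf{I}_b$ by \ref{ass:B-omIfarFromGamma}; this gives $|\omega^2+2\ri\gamma\omega-\omega_*^2|\asymp\nu^2$ and hence $|D|\asymp\nu^{-1}e^{-(\nu\omega_I+\gamma)T_j}$ in one stroke, without any case split. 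Your monotonicity computation for $g(\nu,T_j)=\nu^2e^{-(\nu|\omega_I|-\gamma)T_j}$ then makes explicit the uniform smallness of the remainder that the paper only asserts (``the remainder term is uniformly bounded''). The trade-off is that the paper's $(\alpha,\beta,d)$ bookkeeping is reused verbatim in the subsequent Lemmas~\ref{bd_abs_mu_minu_L} and~\ref{bd_re_mu_minu_L}, so its more laborious setup pays dividends downstream; your factorisation is cleaner for this lemma in isolation but would need to be reconciled with those later computations.
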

	
	\bpf
	Using Lemma \ref{L:j0} we can choose $j$ in \ref{ass:B-formOfT} so large ($j\geq j_0$ with some $j_0\in \N$) that \ref{ass:B-signOfOmega}-- \ref{ass:B-Vmin} hold with $\omega_0$ instead of $\omega_0^\infty$.  
	
	We recall that $V_-(n,\nu):=-(n\omega_R+\ri\nu\omega_I)\boldsymbol{\mu}_0\perm_0\left(1+\hat{\chi}^{(1)}_{L,T}(n\omega_R+\ri\nu\omega_I)\right)$, where $\hat{\chi}^{(1)}_{L,T}$ is given in \eqref{E:Lorentz-T-FT}. Because of assumption \ref{ass:B-formOfT} we have $\sin(c_*T)=0$ and $\cos(c_*T)=-1$ resulting in 
	$$\hat{\chi}^{(1)}_{L,T}(\omega)=-\frac{c_L}{\omega^2+2\ri\gamma\omega-\omega_*^2}\left[1+e^{(\ri \omega-\gamma )T}\right]$$
	and
	$$\hat{\chi}^{(1)}_{L,T}(n\omega_R+\ri\nu\omega_I)=c_L\frac{c_*^2+(\gamma+\nu\omega_I)^2-n^2\omega_R^2+2\ri n\omega_R(\gamma+\nu\omega_I)}{\left(c_*^2+(\gamma+\nu\omega_I)^2-n^2\omega_R^2\right)^2+4n^2\omega_R^2(\ga+\nu\omega_I)^2} \left[1+e^{-(\nu\omega_I+\gamma )T}e^{\ri n\omega_R T}\right].$$
	
	A simple calculation now yields
	\beq\label{Vmin}
	V_-(n,\nu)=-c_0^{-2}(n\omega_R+\ri\nu\omega_I)-\frac{c_0^{-2}c_L\left(\alpha +\ri\beta \right)\left(1+e^{-(\nu\omega_I+\gamma)T_j}e^{\ri n\omega_RT_j}\right)
	}{\left(c_*^2+(\gamma+\nu\omega_I)^2-n^2\omega_R^2\right)^2+4n^2\omega_R^2(\ga+\nu\omega_I)^2},
	\eeq
	where
	$$
	\begin{aligned}
		\alpha(n,\nu)&:=n\omega_R\left(\omega_*^2-n^2\omega_R^2-\nu^2\omega_I^2\right),\quad\beta(n,\nu):=\left(n^2\omega_R^2+\nu^2\omega_I^2\right)\left(2\gamma+\nu\omega_I\right)+\omega_{*}^2\nu\omega_I.	
	\end{aligned}
	$$
	
	As in the statement of the lemma, we distinguish the two cases $(n,\nu)\in \mathbf{I}_a$ or $(n,\nu)\in \mathbf{I}_b$.

	Note again that $\mathbf{I}_a$ has only finitely many elements. For each $(n,\nu)\in\mathbf{I}_a$ we have $e^{-(\nu\omega_I+\gamma)T_j} \to 0 \  (j \to \infty)$ and $V_-(n,\nu)\to V_{-,\infty}^{(a)}(n,\nu)$ as $j\to \infty$ with $ V_{-,\infty}^{(a)}$ given in assumption \ref{ass:B-Vmin}. This assumption and the above convergence imply that there is $j'\in \N_{\mathrm{odd}}$ such that  $V_-(n,\nu)\neq 0$ for all $(n,\nu)\in \mathbf{I}_a$ and $j\geq j'$. This proves the first claim of the lemma, i.e., \eqref{E:est-Vm.nu-small}.
	
	For each $(n,\nu)\in\mathbf{I}_b$ it follows that $ e^{-(\nu\omega_I+\gamma)T_j}\to \infty \ (j\to\infty)$
	and hence
	\beq\label{as-Vminus}
	V_-(n,\nu)\sim V_{-,\infty}^{(b)}(n,\nu)\  (j\to\infty),~~\text{uniformly in }(n,\nu),
	\eeq
	where $$V_{-,\infty}^{(b)}(n,\nu):=-c_0^{-2}c_Le^{-(\nu\omega_I+\ga)T_j}\frac{(\alpha+\ri \beta) e^{\ri n\omega_RT_j}}{\left(c_*^2+(\gamma+\nu\omega_I)^2-n^2\omega_R^2\right)^2+4n^2\omega_R^2(\ga+\nu\omega_I)^2}.
	$$
	The uniformity of the equivalence follows from the fact, that 
	$$\frac{V_-(n,\nu)}{V_{-,\infty}^{(b)}(n,\nu)}=1 + r(n,\nu)e^{-\ri n\omega_RT_j}e^{(\nu\omega_I +\ga)T_j}$$
	with some rational function $r$, such that this remainder term is uniformly bounded in $(n,\nu)\in \mathbf{I}_b$.
	
	Our strategy is to first show \eqref{est-abs_V_min_L} for all $\nu$ large enough (i.e. $\nu \geq \nu_0$ with some $\nu_0\in \N$) and then prove that  $|V_-(n,\nu)|>\delta e^{-(\nu\omega_I+\ga)T_j}$ with $\delta>0$ for all $\gamma/|\omega_I|<\nu <\nu_0$ and all $j$ large enough.
	
	We see that $|V_{-,\infty}^{(b)}(n,\nu)|=c e^{-(\nu\omega_I+\ga)T_j} \frac{|\alpha(n,\nu)+\ri \beta(n,\nu)|}{d(n,\nu)}$, where $c>0$ and 
	\beq\label{E:d}
	d(n,\nu):= \left(c_*^2+(\gamma+\nu\omega_I)^2-n^2\omega_R^2\right)^2+4n^2\omega_R^2(\ga+\nu\omega_I)^2.
	\eeq
	In the following $c$ and $c'$ play the role of generic positive constants independent of $n,\nu$, and $j$ and their values change during the calculation. We have
	\beq\label{al-be-est}
	c'\nu^3\geq |\alpha+\ri\beta|\geq c (|\alpha|+|\beta|)\geq c(|n|\nu^2+\nu^3)\geq c\nu^3
	\eeq
	for all $(n,\nu)$ with $\nu$ large enough. For $d$ we have
	$$d(n,\nu)\sim (\nu^2\omega_I^2-n^2\omega_R^2)^2+4n^2\omega_R^2\nu^2\omega_I^2 \quad (\nu \to \infty)$$
	and because  $|\omega_{R,\infty}|\neq|\omega_{I,\infty}|$, see \ref{ass:B-signOfOmega}, we get 
	\beq\label{d-est}
	c\nu^4\leq d(n,\nu)\leq c'\nu^4
	\eeq
	for all $(n,\nu)$ with $\nu$ large enough. 
	
	Equations \eqref{al-be-est} and \eqref{d-est} prove that \eqref{est-abs_V_min_L} holds for all $(n,\nu)\in \mathbf{I}$ with $\nu\geq \nu_0$ and $j\in \N_{\mathrm{odd}}, j\geq j_3$ with some $\nu_0,j_3\in \N$. To have \eqref{est-abs_V_min_L} for all $(n,\nu)\in \mathbf{I}_b$ and $j\in \N_{\mathrm{odd}}$ large enough (with $c_1,\dots,c_4$ independent of $j$), we need to show that  $|V_-(n,\nu)|>\delta e^{-(\nu\omega_I+\ga)T_j}>0$ with $\delta>0$ for all $(n,\nu)\in \mathbf{I}_b$ and for all $j$ large enough. This will follow from $|V_{-,\infty}^{(b)}(n,\nu)|>\delta' e^{-(\nu\omega_I+\ga)T_j}$ with $\delta'>0$ and from the asymptotics \eqref{as-Vminus}. 
	
	We have $V_{-,\infty}^{(b)}(n,\nu)=f(n,\nu)e^{-(\nu\omega_I+\ga)T_j}e^{\ri n\omega_R T_j}$ with a rational function $f$. Hence it suffices to show that $f(n,\nu)\neq 0$ for all $(n,\nu)\in \mathbf{I}$ with $\gamma/|\omega_I|<\nu <\nu_0$. 
	We observe that $f(n,\nu)=0$ if and only if $\alpha(n,\nu)=\beta(n,\nu)=0$. Because $\omega_R\neq 0$ by \ref{ass:B-signOfOmega}, the equality $\alpha(n,\nu)=0$ holds if and only if one of the following two cases holds
	$$\text{(i)} \ n=0, \quad \text{(ii)} \ \omega_*^2=n^2\omega_R^2+\nu^2\omega_I^2.$$
	Case (i) implies $\beta = \nu\omega_I(\nu^2\omega_I^2 +2\gamma\nu\omega_I+\omega_*^2)$. Because $\omega_I\neq 0$, the condition $\beta=0$ is equivalent to
	$\nu\omega_I=-\gamma\pm\sqrt{\gamma^2-\omega_*^2}=-\gamma\pm \ri c_*$, which is a contradiction because $\nu\omega_I\in \R$. Case (ii) results in $\beta = 2\omega_*^2(\gamma+\nu\omega_I)$, which does not vanish due to assumption \ref{ass:B-omIfarFromGamma} and because $\omega_*>0$. 
	
	Note that, in fact, $f$ depends also on $j$ (via the dependence of $\omega_R$ and $\omega_I$ on $j$).  However, the above argument works also with $(\omega_{R,\infty},\omega_{I,\infty})$ replacing $(\omega_R,\omega_I)$ in $f$, so the convergence $\omega_0\to \omega_\infty, (j\to\infty),$ and the continuity of $f$ yield a constant $\delta'>0$ independent of $j$, for which
	$$|V_{-,\infty}^{(b)}(n,\nu)|>\delta' e^{-(\nu\omega_I+\ga)T_j}$$
	for all $j\in \N_{\mathrm{odd}}$ large enough and all $(n,\nu)\in \mathbf{I}_b$.
	
	This completes the proof of  \eqref{est-abs_V_min_L}, where the value of $j_3$ follows from the above arguments.
	\epf

	\blem\label{bd_abs_mu_minu_L}
	Assume \ref{ass:B-signOfOmega}, \ref{ass:B-omIfarFromGamma},  \ref{ass:B-muMin}, and \ref{ass:B-formOfT} and let $\omega_0=\omega_R+\ri\omega_I$ be the eigenvalue for the truncation $T=T_j$ on the curve through $\omega_0^\infty$ given by Lemma \ref{lem:evals_lorentz_close}. Then there are constants $j_4\in \mathbb{N}$ and $c_1,...,c_4>0$ such that for all $j\geq j_4$
	\begin{equation}\label{est_abs_mu_min_L_sm_nu}
		c_1 \leq |\mu_-(n,\nu)| \leq c_2
	\end{equation}
	holds for each $(n,\nu) \in \mathbf{I}_a$ and
    \begin{equation}\label{est_abs_mu_min_L_lg_nu}
		c_3 e^{-(\nu\omega_I+\gamma)T_j/2} \leq |\mu_-(n,\nu)| \leq c_4 e^{-(\nu\omega_I+\gamma)T_j/2}	
	\end{equation}
	for all $(n,\nu) \in \mathbf{I}_b$.
	\elem
	\bpf
	Again, we first use Lemma \ref{L:j0} to choose  $j$ in \ref{ass:B-formOfT} so large ($j\geq j_0$ with some $j_0\in \N$) that\ref{ass:B-signOfOmega}, \ref{ass:B-omIfarFromGamma},  \ref{ass:B-muMin} hold with $\omega_0$ instead of $\omega_0^\infty$.

	Recall that $\mu_-(n,\nu)=\sqrt{n^2k^2+\omega_0^{(n,\nu)}V_-(n,\nu) }$ and $V_-$ is given by \eqref{Vmin}. We get
	\beq\label{musq}
	\mu_-^2(n,\nu)=n^2k^2-c_0^{-2}\left(n^2\omega_R^2-\nu^2\omega_I^2+2n\omega_R\nu\omega_I\ri+\frac{c_L(\tilde{\alpha}(n,\nu)+\ri\tilde{\beta}(n,\nu))\left(1+e^{-(\nu\omega_I+\ga)T_j}e^{\ri n\omega_RT_j}\right)}{d(n,\nu)}\right),
	\eeq
	where
	\beq\label{al-be-til}
	\begin{aligned}
		\tilde{\alpha}(n,\nu)&:=-\left(n^2\omega_R^2+\nu^2\omega_I^2\right)\left(n^2\omega_R^2+\nu^2\omega_I^2+2\gamma\nu\omega_I\right)-\omega_*^2\left(\nu^2\omega_I^2-n^2\omega_R^2\right),\\ \tilde{\beta}(n,\nu)&:=2n\omega_R\left(\gamma\left(n^2\omega_R^2+\nu^2\omega_I^2\right)+\omega_*^2\nu\omega_I\right),
	\end{aligned}
	\eeq
	and $d$ was defined in \eqref{E:d}.
	Note also that $\tilde{\beta}$ was defined already in Lemma \ref{L:j1-phi-beta}. 
	
	We show first \eqref{est_abs_mu_min_L_sm_nu} for $(n,\nu)\in\mathbf{I}_a$, in which case we have $e^{-(\nu\omega_I+\gamma)T_j} \to 0 \  (j \to \infty)$ and
	$$\mu_-^2(n,\nu)\to \mu_{-,\infty}^2(n,\nu) \quad (j\to\infty),$$
	where $\mu_{-,\infty}^2$ was defined in assumption \ref{ass:B-muMin}. By the same assumption we have $\mu_{-,\infty}(n,\nu)\neq 0$ for all $(n,\nu)\in\mathbf{I_a}$. This proves \eqref{est_abs_mu_min_L_sm_nu} for all $j\in \N_{\mathrm{odd}}, j\geq j'$ with some $j'\in \N$ as $\mathbf{I}_a$ is finite.
	
	Now, for any $(n,\nu)\in\mathbf{I}_b$ we have  $e^{-(\nu\omega_I+\gamma)T_j} \to \infty \  (j \to \infty)$
	and, analogously to \eqref{as-Vminus},
	\beq\label{E:mu-min-as}
	\mu_-^2(n,\nu)\sim \frac{c_0^{-2}c_Le^{-(\nu\omega_I+\ga)T_j}(\tilde{\alpha}(n,\nu)+\ri \tilde{\beta}(n,\nu))e^{\ri n\omega_RT_j}}{d(n,\nu)} \quad (j\to\infty)\quad \text{ uniformly in }(n,\nu).
	\eeq
	In the proof of Lemma \ref{bds_V_pm_L} we showed in 	\eqref{d-est} that $c\nu^4\leq d(n,\nu)\leq c'\nu^4$ for all $\nu$ large enough. And because $d>0$, we obtain \eqref{d-est} for all $(n,\nu)\in \mathbf{I}$.
	
	Next, we estimate $|\tilde{\alpha}+\ri \tilde{\beta}|$ from above and below. Clearly,
	\beq \label{E:alpha-beta-from-above}|\tilde{\alpha}(n,\nu)|\leq c\nu^4, \ |\tilde{\beta}(n,\nu)|\leq c\nu^3 \quad \text{for all} \ (n,\nu)\in \mathbf{I},\eeq
	which implies
	\beq\label{E:al-be-upper}
	|\tilde{\alpha}(n,\nu)+\ri \tilde{\beta}(n,\nu)|\leq c\nu^4  \quad \text{for all} \ (n,\nu)\in \mathbf{I}.
	\eeq
	
	For the lower bound we first observe that 
	\beq\label{E:alpha-from-below}|\tilde{\alpha}(n,\nu)|\geq c\nu^4 \quad \text{for all} \ (n,\nu)\in \mathbf{I} \ \text{with $\nu$ large enough}.\eeq
	This produces $c\nu^4$ as a lower bound of $|\tilde{\alpha}+\ri \tilde{\beta}|$ for $\nu$ large enough. To get such a lower bound for all $(n,\nu)\in\mathbf{I}_b$, we need to show that $\tilde{\alpha}(n,\nu)+\ri \tilde{\beta}(n,\nu)\neq 0$ for all $(n,\nu)\in \mathbf{I}_b$. 
	
	Of course, $\tilde{\alpha}+\ri\tilde{\beta}=0$ means $\tilde{\alpha}=\tilde{\beta}=0$. If $\tilde{\alpha}(n,\nu)=0$, then \[n^2\omega_R^2+\nu^2\omega_I^2=-\frac{\omega_*^2(\nu^2\omega_I^2-n^2\omega_R^2)}{n^2\omega_R^2+\nu^2\omega_I^2+2\ga\nu\omega_I}\]
	and
	\[\tilde{\beta}=2n\omega_R\omega_*^2\left(\nu\omega_I-\frac{\ga(\nu^2\omega_I^2-n^2\omega_R^2)}{n^2\omega_R^2+\nu^2\omega_I^2+2\ga\nu\omega_I}\right).\]
	Hence, $\tilde{\alpha}=\tilde{\beta} =0$ implies that one of the following three cases holds
        $$ \text{(i)}\, \omega_R=0,\qquad \text{(ii)}\, n=0,\qquad \text{(iii)}\, \nu\omega_I(n^2\omega_R^2+\nu^2\omega_I^2+2\ga\nu\omega_I)=\ga(\nu^2\omega_I^2-n^2\omega_R^2).$$
	
    Case (i) immediately contradicts \ref{ass:B-signOfOmega} and case (iii) is equivalent to 
    $\omega_I=-\frac{\gamma}{\nu}$
	and hence contradicts \ref{ass:B-omIfarFromGamma}. For $n=0$, the equation $\tilde{\alpha}=0$ reads 
        $$\nu^2\omega_I^2 +2\gamma\nu\omega_I=-\omega_*^2$$
    or equivalently $(\nu\omega_I+\gamma)^2=-c_*^2<0$, such that case (ii) is also impossible.
	
	We conclude
	\beq\label{E:al-be-lower}
	|\tilde{\alpha}(n,\nu)+\ri \tilde{\beta}(n,\nu)|\geq c\nu^4  \quad \text{for all} \ (n,\nu)\in \mathbf{I}_b.
	\eeq
	
	Estimate \eqref{d-est} for all $(n,\nu)\in \mathbf{I}$ and estimates  \eqref{E:al-be-upper} and \eqref{E:al-be-lower} imply \eqref{est_abs_mu_min_L_lg_nu}. This completes the proof.	The value of $j_4$ is given by the above arguments.\epf
	
	\blem\label{bd_re_mu_minu_L}
	Assume  \ref{ass:B-signOfOmega}, \ref{ass:B-omIfarFromGamma},  \ref{ass:B-muMin}, \ref{ass:B-formOfT},  and \ref{ass:B-rationality}. There are constants $c_1,c_2>0$ such that
	\begin{equation}\label{est_re_mu_minu_L_large_re_nu}
		c_1 e^{-(\nu\omega_I+\gamma)T/2} \leq \Real\mu_- (n,\nu)\leq c_2 e^{-(\nu\omega_I+\gamma)T/2}
    \end{equation}
	for each $(n,\nu) \in \mathbf{I}$.
	\elem
	\brem
	We want to emphasize that, according to assumption \ref{ass:B-rationality}, in Lemma \ref{bd_re_mu_minu_L} the truncation is given by $T=T_{j_*}$ and the eigenvalue $\omega_0=\omega_R+\ri\omega_I$ is for this truncation. Recall also that $\mu_-$ depends on $\omega_R$ and $\omega_I$.
	
	Also note that because $T$ is fixed in Lemma \ref{bd_re_mu_minu_L}, we do not need to distinguish between the two cases $(n,\nu)\in \mathbf{I}_a$ and $(n,\nu)\in \mathbf{I}_b$ like in Lemmas \ref{L:j1-phi-beta}-\ref{bd_abs_mu_minu_L}.
	It is not necessary to isolate the case $\nu<\frac{\ga}{|\omega_I|}$.	
	\erem
	\begin{proof}
		First, note that the upper bound in \eqref{est_re_mu_minu_L_large_re_nu} holds using Lemma \ref{bd_abs_mu_minu_L} and the fact $\Real\mu_-\leq |\mu_-|$. Also, $\Real \mu_-\geq 0$ follows automatically - from the definition of the square root.
		
		For the lower bounds we use $\mu_-^2$, given in \eqref{musq}. Due to assumption \ref{ass:B-rationality} we have, for some $m\in\Z$,
		$$n\omega_R T = n \omega_Rj_*\frac{\pi}{c_*}=2nm\pi\in \pi \Z_{\mathrm{even}}$$
		resulting in $e^{\ri n\omega_RT}=1$ for all $n\in \Z$ and we obtain the largely simplified form of $\mu_-^2$
		$$
		\begin{aligned}
			\Real\mu_-^2(n,\nu)&=n^2k^2-c_0^{-2}\left(n^2\omega_R^2-\nu^2\omega_I^2+\frac{c_L\tilde{\alpha}(n,\nu)\left(1+e^{-(\nu\omega_I+\ga)T}\right)}{d(n,\nu)}\right),\\
			\Imag\mu_-^2(n,\nu)&=-c_0^{-2}\left(2n\omega_R\nu\omega_I+\frac{c_L\tilde{\beta}(n,\nu)\left(1+e^{-(\nu\omega_I+\ga)T}\right)}{d(n,\nu)}\right)	
		\end{aligned}
		$$
		with $\tilde{\alpha}, \tilde{\beta}$, and $d$ defined in \eqref{al-be-til} and \eqref{E:d}.

		Our strategy is first to show that there is $\delta>0$ such that for $\nu$ large enough ($\nu\geq \nu_0$ and $|n|\leq \nu$ with some $\nu_0\in \N$)
		\beq\label{arg-cond}
		\mathrm{Arg}(\mu^2_-(n,\nu))\in[-\pi+\delta, \pi-\delta].\eeq
		Then $\cos (\mathrm{Arg}(\mu_-))$ is bounded away from zero and we have $\Real \mu_-(n,\nu)\geq c |\mu_- (n,\nu)|$ for all $\nu \geq \nu_0$. Lemma \ref{bd_abs_mu_minu_L} then ensures
		$$
		\begin{aligned}
			\Real \mu_-(n,\nu)&\geq c e^{-(\nu\omega_I+\gamma)T/2} \quad \forall \nu \geq \max \{\nu_0,\gamma/|\omega_I|\}, |n|\leq \nu,\\
			\Real \mu_-(n,\nu)&\geq c \quad \forall \nu \in[\nu_0,\gamma/|\omega_I|) \quad \text{if} \ \nu_0<\gamma/|\omega_I|, |n|\leq\nu 
		\end{aligned}
		$$ 
		for some $c>0$ independent of $n$ and $\nu$. Note that assumption \ref{ass:B-rationality} will play a central role in proving \eqref{arg-cond}.
		
		Second, we will show that $\Real \mu_-(n,\nu)\neq 0$ for all $\nu<\nu_0$. Then the above lower bounds on $\Real \mu_-(n,\nu)$ hold with $\nu_0$ set to $\nu_0=1$, i.e., the lower bound in  \eqref{est_re_mu_minu_L_large_re_nu} will hold.
		
		We rewrite $\Real \mu_-^2$ and $\Imag \mu_-^2$ as follows.
		$$
		\begin{aligned}
			\Real \mu_-^2(n,\nu)&=-c_0^{-2}c_Le^{-(\nu\omega_I+\ga)T}\frac{\tilde{\alpha}-\psi(n,\nu)e^{(\nu\omega_I+\ga)T}}{d(n,\nu)},\\
			\Imag \mu_-^2(n,\nu)&=-c_0^{-2}c_Le^{-(\nu\omega_I+\ga)T}\frac{\tilde{\beta}-\varphi(n,\nu)e^{(\nu\omega_I+\ga)T}}{d(n,\nu)},         	
		\end{aligned}
		$$
		where
		$$
		\psi(n,\nu):=-\tilde{\alpha}(n,\nu)+\frac{c_0^{2}}{c_L}\left[n^2k^2-c_0^{-2}(n^2\omega_{R}^2-\nu^2\omega_{I}^2)\right]d(n,\nu)
		$$
		and
		$$
		\varphi(n,\nu)=-\tilde{\beta}(n,\nu) -2\frac{n\omega_{R}\nu\omega_{I}}{c_L}d(n,\nu)
		$$
		was defined in Lemma \ref{L:j1-phi-beta}.
		
		Let us first consider the case of large $\nu$ and note that 
		$$\Real \mu_-^2(n,\nu) \sim -c_0^{-2}c_L e^{-(\nu\omega_I+\ga)T}\frac{\tilde{\alpha}(n,\nu)}{d(n,\nu)}, \qquad \Imag \mu_-^2(n,\nu) \sim -c_0^{-2}c_L e^{-(\nu\omega_I+\ga)T}\frac{\tilde{\beta}(n,\nu)}{d(n,\nu)} \qquad (\nu\to\infty),$$
		hold uniformly in $n$. As we explained in \eqref{E:alpha-beta-from-above} and \eqref{E:alpha-from-below} in the proof of Lemma \ref{bd_abs_mu_minu_L}, there are $c,c'>0$ such that           
		$$c\nu^4\leq \tilde{\alpha}(n,\nu)\leq c'\nu^4, \quad |\beta(n,\nu)|<c\nu^3 \quad  \forall (n,\nu)\in \mathbf{I}.$$
		This implies 
		$$\lim_{\nu \to \infty}\frac{\Imag \mu_-^2(n,\nu)}{\Real \mu_-^2(n,\nu)}=0  \quad \forall n \in\Z, |n|\leq \nu.$$
		Moreover, one observes that there is $c>0$ and $\nu_0\in \N$, such that
		$$\tilde{\alpha}(n,\nu)<-c\nu^4 \quad \forall (n,\nu)\in \mathbf{I}, \nu \geq \nu_0.$$
		The negativity of $\tilde{\alpha}$ then guarantees that $\Real \mu_-^2(n,\nu)>0$ and hence $\mathrm{Arg}(\mu^2_-(n,\nu))$ stays near zero for $\nu$ large enough, i.e., we have \eqref{arg-cond} for $\nu\geq \nu_0$.
		
		It remains to treat the case of small $\nu$, i.e., to show that $\Real \mu_-(n,\nu)\neq 0$ for all $\nu<\nu_0$. Clearly, $\Real \mu_-=0$ if and only if $\Imag \mu_-^2=0 $ and $\Real \mu_-^2\leq 0$. Assuming    $\Imag \mu_-^2(n,\nu)=0$ for some $(n,\nu)\in \mathbf{I}$, we get  
		\beq\label{est:phi}
		\tilde{\beta}(n,\nu)-\varphi(n,\nu)e^{(\nu\omega_I+\ga)T}= 0.
		\eeq
		We distinguish between the two following possibilities
		\begin{enumerate}
			\item[(i)] $\tilde{\beta}(n,\nu)= 0$,
			\item[(ii)] $\tilde{\beta}(n,\nu)\neq 0$. 
		\end{enumerate}
		For (i) we note that 
		$$\varphi(n,\nu)=-2\frac{n\omega_{R}\nu\omega_{I}}{c_L}d(n,\nu)=0.$$
		Hence, because $d(n,\nu),\omega_R,\omega_I\neq 0$, we get $n=0$. Now
		$$
		\tilde{\alpha}(0,\nu)=-\nu^2\omega_I^2(c_*^2+(\ga+\nu\omega_I)^2)<0, \qquad \psi(0,\nu)=-\tilde{\alpha}(0,\nu)+c_L^{-1}\nu^2\omega_I^2d(0,\nu)>0
		$$
		and hence
		$$\Real\mu_-^2(0,\nu)>0,$$
		which is a contradiction.
		
		For (ii) we get by \eqref{est:phi} that $\varphi(n,\nu)\neq 0$. Using Lemma \ref{L:j1-phi-beta} and the fact that $j_*\geq j_1$, we conclude
		$$|\tilde{\beta}(n,\nu)|\neq |\varphi(n,\nu)|e^{(\nu\omega_I+\gamma)T},$$
		which contradicts \eqref{est:phi}.    
	\end{proof}
	
	\subsection{Proof of Theorem \ref{T:Lor-example}: Non-Resonance Assumption \ref{ass:A-resolvent} for the Lorentz Interface \eqref{E:perm-LT}}\mbox{}\\
	Next, we show that the estimates of $\Real(\mu_\pm)$ and $V_\pm$ imply that $\omega_0^{(n,\nu)}$ lies outside the essential spectrum and outside the singular set $\Omega_0$ for all  $(n,\nu) \in \mathbf{I}\setminus \{(1,1),(-1,1)\}$.

	\blem\label{L:essspec-inters}
	Assume \ref{ass:B-signOfOmega}--\ref{ass:B-muMin}, \ref{ass:B-formOfT},  and \ref{ass:B-rationality}. Then 
	$$\omega_0^{(n,\nu)} \notin \sigma_\mathrm{ess}(\cL_{nk}^T)\cup \Omega_0 \quad \text{for all} \quad  (n,\nu) \in \mathbf{I}\setminus \{(1,1),(-1,1)\}.$$
	\elem
	\bpf
	Recall that $\omega_0^{(n,\nu)}=n\omega_R+\ri\nu \omega_I$. The essential spectrum of $\cL_{nk}^T$ outside $\Omega_0$ is given by \eqref{E:ess-spec-red}. This can be reformulated as the set of $\omega\in \C$ for which $n^2k^2 - \boldsymbol{\mu}_0 \omega^2 \perm_+(\omega)$ or $n^2k^2 - \boldsymbol{\mu}_0 \omega^2 \perm_-(\omega)$ is real and non-positive if $nk\neq 0$ and strictly negative if $nk=0$.
	Our specific choice of $T=T_{j_*}$ in \ref{ass:B-rationality} ensures, that $T$ is large enough so we can use Lemmas \ref{bds_V_pm} -- \ref{bd_abs_mu_minu_L}.

	We proved in Lemma \ref{bds_V_pm} that $n^2k^2 - \boldsymbol{\mu}_0 (\omega_0^{(n,\nu)})^2 \perm_+(\omega_0^{(n,\nu)}) \notin (-\infty,0]$ holds for all $(n,\nu)\in\mathbf{I}$, see \eqref{est-re_mu_plus}. Lemma \ref{bd_re_mu_minu_L} shows the same result for $\perm_-$ under assumptions  \ref{ass:B-signOfOmega}, \ref{ass:B-omIfarFromGamma},  \ref{ass:B-muMin}, \ref{ass:B-formOfT},  and \ref{ass:B-rationality} on $\omega_R$, $\omega_I$, and $T$.
	
	Similarly, since $\omega_0^{(n,\nu)}\neq 0$ for all $(n,\nu )\in\mathbf{I}$, we only need to study that part of the set $\Omega_0$ from \eqref{eqn:Omega0} which is given as those $\omega\in\C$ for which $V_-(\omega )=0$ or $V_+(\omega)=0$. However, in Lemmas \ref{bds_V_pm} and \ref{bds_V_pm_L}, using also assumption \ref{ass:B-Vmin}, we proved positive lower bounds for the modulus of $V_\pm (\omega_0^{(n,\nu )})$, so $\omega_0^{(n,\nu)}\notin \Omega_0$.
	\epf
	
	In summary, the above spectral results for $\cL_{nk}^T$ show that \ref{ass:A-resolvent} is satisfied by the Lorentz interface.
	
	\begin{prop}\label{resol_set}
		Consider the Lorentz interface \eqref{E:perm-LT} under assumptions  \ref{ass:B-signOfOmega} -- \ref{ass:B-rationality}. 
		Then
		$$\omega_0^{(n,\nu )} \notin  \sigma(\cL_{nk}^T) \quad \text{for all} \quad (n,\nu) \in \mathbf{I}\setminus \{(1,1),(-1,1)\},$$
		i.e., assumption \ref{ass:A-resolvent} is satisfied.
	\end{prop}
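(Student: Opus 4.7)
The plan is to combine the spectral decomposition $\sigma(\cL_{nk}^T) = \sigma_p(\cL_{nk}^T) \cup \sigma_\mathrm{ess}(\cL_{nk}^T)$ with the two preparatory results of this section, namely Lemma \ref{L:ptsp-belowgam}, which excludes the point spectrum outside the singular set $\Omega_0$, and Lemma \ref{L:essspec-inters}, which excludes the essential spectrum together with $\Omega_0$ itself (the latter being the place where eigenvalues of possibly infinite multiplicity could sit and which the dispersion-relation analysis does not see). The two indices $(1,1)$ and $(-1,1)$ are of course removed at the outset, because by the construction in \ref{ass:A-eval} the frequency $\omega_0^{(1,1)}=\omega_0$ is an eigenvalue of $\cL_k^T$, and by the conjugation identity $\cL_{-k}(-\overline{\omega})=-\overline{\cL_k(\omega)}$ so is $\omega_0^{(-1,1)}=-\overline{\omega_0}$ of $\cL_{-k}^T$.

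First I would observe that by \ref{ass:B-rationality} the truncation $T=T_{j_*}=j_*\pi/c_*$ satisfies $j_*\ge \max\{j_1,j_2,j_3,j_4,\lceil T_+c_*/\pi\rceil\}$, which is precisely the uniform threshold under which Lemmas \ref{bds_V_pm}--\ref{bd_re_mu_minu_L}, \ref{L:ptsp-belowgam} and \ref{L:essspec-inters} all apply simultaneously, along the curve through $\omega_0^\infty$ supplied by Lemma \ref{lem:evals_lorentz_close}. The hypotheses of Lemma \ref{L:ptsp-belowgam} (i.e.\ \ref{ass:B-signOfOmega}, \ref{ass:B-omIfarFromGamma}, \ref{ass:B-nonResonance}, \ref{ass:B-formOfT}) and of Lemma \ref{L:essspec-inters} (i.e.\ \ref{ass:B-signOfOmega}--\ref{ass:B-muMin}, \ref{ass:B-formOfT}, \ref{ass:B-rationality}) are both contained in \ref{ass:B-signOfOmega}--\ref{ass:B-rationality}, so for every $(n,\nu)\in \mathbf{I}\setminus\{(1,1),(-1,1)\}$ the two lemmas deliver
\[
\omega_0^{(n,\nu)}\notin \sigma_p(\cL_{nk}^T)\setminus\Omega_0 \quad \text{and} \quad \omega_0^{(n,\nu)}\notin \sigma_\mathrm{ess}(\cL_{nk}^T)\cup\Omega_0 .
\]
Taking the union of the excluded sets gives
\[
\omega_0^{(n,\nu)} \notin \left(\sigma_p(\cL_{nk}^T)\setminus\Omega_0\right)\cup\Omega_0\cup\sigma_\mathrm{ess}(\cL_{nk}^T) \supseteq \sigma_p(\cL_{nk}^T)\cup\sigma_\mathrm{ess}(\cL_{nk}^T) = \sigma(\cL_{nk}^T),
\]
which is exactly the claim and therefore verifies \ref{ass:A-resolvent} for the Lorentz interface \eqref{E:perm-LT}.

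Since all the analytic substance has been absorbed into the preparatory lemmas, the obstacle at the level of this proposition is purely organisational: one has to check that \ref{ass:B-rationality} is strong enough to meet the $j_i$-thresholds of every invoked lemma at a common index $j_*$, and that the rational-phase condition $\omega_R/c_*=2m/j_*$ imposed in \ref{ass:B-rationality} is genuinely compatible with both ingredients of the argument, namely the winding-number control of eigenvalues near and below the strip $S_\gamma$ used in the proof of Lemma \ref{L:ptsp-belowgam}, and the phase simplification $e^{\ri n\omega_R T}=1$ on which Lemma \ref{bd_re_mu_minu_L}, and hence Lemma \ref{L:essspec-inters}, crucially relies.
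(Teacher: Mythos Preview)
Your proof is correct and follows essentially the same approach as the paper: both combine Lemma~\ref{L:ptsp-belowgam} (excluding $\sigma_p(\cL_{nk}^T)\setminus\Omega_0$) with Lemma~\ref{L:essspec-inters} (excluding $\sigma_\mathrm{ess}(\cL_{nk}^T)\cup\Omega_0$) via the decomposition $\sigma=\sigma_p\cup\sigma_\mathrm{ess}$. Your version is more explicit about verifying that \ref{ass:B-rationality} supplies the common threshold $j_*$ needed for all invoked lemmas, and spells out the set-theoretic union, whereas the paper dispatches this in two sentences.
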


	\begin{proof}
		From Lemma \ref{L:ptsp-belowgam} we get
		$$\omega_0^{(n,\nu)}\notin \sigma_p(\cL_{nk}^T)\setminus\Omega_0 \quad \forall (n,\nu)\in \mathbf{I}\setminus \{(1,1),(-1,1)\}.$$
		Additionally using Lemma \ref{L:essspec-inters} and the fact that $\sigma(\cL_{nk}^T)=\sigma_p(\cL_{nk}^T)\cup \sigma_\mathrm{ess}(\cL_{nk}^T)$, we obtain the statement.
	\end{proof}

	\subsection{Proof of Theorem \ref{T:Lor-example}: Resolvent Estimate for the Lorentz Interface \eqref{E:perm-LT}}
\label{S:resolv}
In this section we verify assumption \ref{ass:A-resolvEst} for the case of 
the interface of a non-dispersive material and a Lorentz material as given by  \eqref{E:perm-LT}.
We  prove estimates of the operator norms  $\|\cdot\|_{L^2\to L^2}$ and $\|\cdot\|_{\cH^1\to \cH^1}$ for the resolvent operator. Note that the truncation parameter $T$ in \eqref{E:perm-LT} will now be fixed at $T=T_{j_*}$, according to our assumptions \ref{ass:B-formOfT} and \ref{ass:B-rationality}. With this the parameter $T$ satisfies the conditions of Lemmas \ref{bds_V_pm}--\ref{bd_re_mu_minu_L}. Hence, we can use the resulting estimates in the subsequent proofs. In Proposition \ref{resol_set} we showed that $n\omega_R+\ri\nu \omega_I$ lies in the resolvent set. Next, in Lemma \ref{resol_L2} we produce a decay rate of the $L^2$-norm of the resolvent in terms of the index $\nu$. 

Below we use the same notation $\|\cdot \|_{L^2}$ for the norm of any $\C^m$-valued function with $m=1,2,3$.
\begin{thm}\label{resol_L2}
	Consider the operator $\cL_{nk}^T$ given in \eqref{E:Lnk-DL} and assume \ref{ass:B-signOfOmega}--\ref{ass:B-rationality}. Then a unique solution $u$ of 
	$$\cL_{nk}^T(\cdot, n\omega_R+\ri \nu\omega_I)u=r$$ 
	exists for each $r\in L^2(\R,\C^3)$ and $(n,\nu)\in \mathbf{I}\setminus \{(1,1),(-1,1)\}$. It can be decomposed into a homogeneous part $u_h$ and a particular part $u_p$ satisfying
	\beq\label{est-uh-thm}
	\begin{split}
		\|u_{h}\|_{L^2(\R_+)}
		&\leq c\left[\|(r_1,r_2)\|_{L^2(\R_-)} \nu^{\frac{1}{2}}e^{\frac{3}{4}(\nu\omega_I+\ga)T} + \|r_3\|_{L^2(\R_-)}\nu^{-\frac{1}{2}}e^{\frac{1}{4}(\nu\omega_I+\gamma)T}+\nu^{-1}\|r\|_{L^2(\R_+)}\right],\\
		\|u_{h,1}\|_{L^2(\R_-)} &\leq c\left[\|(r_1,r_2)\|_{L^2(\R_-)}\nu^{2}e^{\frac{3}{2}(\nu\omega_I+\ga)T}
		+\|r_3\|_{L^2(\R_-)}\nu e^{(\nu\omega_I+\gamma)T}+\nu^{\frac{3}{2}}e^{\frac{5}{4}(\nu\omega_I+\gamma)T}\|r\|_{L^2(\R_+)}\right],\\
		\|u_{h,2}\|_{L^2(\R_-)} &\leq c\left[\|(r_1,r_2)\|_{L^2(\R_-)}\nu e^{(\nu\omega_I+\ga)T} + \|r_3\|_{L^2(\R_-)} e^{\frac12(\nu\omega_I+\gamma)T}+\nu^{\frac{1}{2}}e^{\frac{3}{4}(\nu\omega_I+\gamma)T}\|r\|_{L^2(\R_+)}\right],\\
		\|u_{h,3}\|_{L^2(\R_-)} &\leq c\left[\|(r_1,r_2)\|_{L^2(\R_-)}e^{\frac{1}{2}(\nu\omega_I+\ga)T}+ \|r_3\|_{L^2(\R_-)}\nu^{-1} +\nu^{-\frac{1}{2}}e^{\frac{1}{4}(\nu\omega_I+\gamma)T}\|r\|_{L^2(\R_+)}\right],\\
	\end{split}
	\eeq
	and
	\beq\label{Est_up-thm}\begin{split}
		\|u_{p}\|_{L^2(\R_+)}\leq& ~c\|r\|_{L^2(\R_+)}\nu^{-1},\\
		\|u_{p,1}\|_{L^2(\R_-)}\leq& ~c\left(\|(r_1,r_2)\|_{L^2(\R_-)}\nu e^{(\nu\omega_I+\ga)T}+\|r_3\|_{L^2(\R_-)}\nu e^{(\nu\omega_I+\ga)T}\right),\\
		\|u_{p,2}\|_{L^2(\R_-)}\leq& ~c\left(\|(r_1,r_2)\|_{L^2(\R_-)}\nu e^{(\nu\omega_I+\ga)T}+\|r_3\|_{L^2(\R_-)} e^{\frac12(\nu\omega_I+\ga)T}\right),\\
		\|u_{p,3}\|_{L^2(\R_-)}\leq& ~c\left(\|(r_1,r_2)\|_{L^2(\R_-)}e^{\frac12(\nu\omega_I+\ga)T}+\|r_3\|_{L^2(\R_-)}\nu^{-1}\right)
	\end{split}\eeq
	for all $(n,\nu)\in \mathbf{I}$ with $\nu>\gamma/|\omega_I|$, where $c>0$ is a constant independent of $\nu$ and $n$.
\end{thm}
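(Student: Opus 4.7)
The plan is to reduce the first-order system $\cL_{nk}^T(\omega_0^{(n,\nu)})u=r$ to a single second-order scalar ODE for $u_3$ on each half-line, and then use an explicit Green's function. Writing out the three equations, using the piecewise constant values $V_\pm$, $\mu_\pm^2=n^2k^2+\omega_0^{(n,\nu)} V_\pm$, and inverting the first two lines algebraically, one obtains
\[
u_1 = \frac{nk u_3 - r_1}{V_\pm}, \qquad u_2 = \frac{\ri \partial_x u_3 - r_2}{V_\pm}, \qquad \partial_x^2 u_3 - \mu_\pm^2 u_3 = -V_\pm r_3 - nk r_1 - \ri\partial_x r_2
\]
on $\R_\pm$ respectively. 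I would then split $u_3=u_{3,h}+u_{3,p}$, so that $u_p$ solves the inhomogeneous ODE on each half-line with vanishing behaviour at $x=\pm\infty$, and $u_h$ solves the homogeneous ODE with coefficients pinned down by the matching conditions at $x=0$, namely continuity of $u_3$ (built into $D_\cL$) and continuity of $u_2$ (equivalent, via the algebraic relation, to continuity of $(\ri\partial_x u_3 - r_2)/V_\pm$).

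For the particular part I would use the explicit half-line Green's function $G_\pm(x,y)=-\tfrac{1}{2\mu_\pm}e^{-\mu_\pm|x-y|}$, which is well-defined because $\Real(\mu_\pm)>0$ by Lemmas \ref{bds_V_pm} and \ref{bd_re_mu_minu_L}. Young's convolution inequality with $\|e^{-\mu_\pm|\cdot|}\|_{L^1}=2/\Real(\mu_\pm)$ yields $L^2$ bounds with a prefactor $|V_\pm|/(|\mu_\pm|\Real(\mu_\pm))$ for the $r_3$-term and $|nk|/(|\mu_\pm|\Real(\mu_\pm))$ for the $r_1$-term. The $\ri\partial_x r_2$-term I would integrate by parts against $G_\pm$, transferring the derivative onto the kernel, which removes one power of $|\mu_\pm|$ from the denominator and produces a bound proportional to $1/\Real(\mu_\pm)$. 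Inserting the asymptotics $|V_+|,|\mu_+|,\Real\mu_+\sim\nu$ from Lemma \ref{bds_V_pm} on $\R_+$ and $|V_-|\sim\nu^{-1}e^{-(\nu\omega_I+\ga)T}$, $|\mu_-|, \Real\mu_-\sim e^{-(\nu\omega_I+\ga)T/2}$ on $\R_-$ from Lemmas \ref{bds_V_pm_L}--\ref{bd_re_mu_minu_L}, one reads off the estimates \eqref{Est_up-thm} for $u_{p,3}$, and then for $u_{p,1}, u_{p,2}$ by substituting back into the algebraic relations (where the $\partial_x u_3$ needed for $u_2$ is bounded by Young's inequality applied with the differentiated kernel).

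For the homogeneous part I would write $u_{3,h}(x)=c_+ e^{-\mu_+ x}\mathbbm{1}_{x>0}+c_- e^{\mu_- x}\mathbbm{1}_{x<0}$, compute $\|u_{3,h}\|_{L^2(\R_\pm)}=|c_\pm|/\sqrt{2\Real\mu_\pm}$, and determine $(c_+,c_-)$ by the $2\times 2$ matching system
\[
\begin{pmatrix} 1 & -1 \\ -\mu_+/V_+ & -\mu_-/V_- \end{pmatrix}\!\begin{pmatrix} c_+ \\ c_- \end{pmatrix} = \begin{pmatrix} u_{3,p}(0^-)-u_{3,p}(0^+) \\ u_{2,p}(0^-)-u_{2,p}(0^+) \end{pmatrix}.
\]
Its determinant is $-(\mu_-/V_-+\mu_+/V_+)$, and the asymptotics of Lemmas \ref{bds_V_pm}--\ref{bd_re_mu_minu_L} show that $\mu_-/V_-$ dominates and is of order $\nu e^{(\nu\omega_I+\ga)T/2}$, so the determinant is bounded away from zero by that same order (in particular non-vanishing, giving uniqueness). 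Pointwise boundary values $u_{3,p}(0^\pm)$ and $\partial_x u_{3,p}(0^\pm)$ I would estimate by Cauchy--Schwarz against the kernel $e^{-\Real\mu_\pm|y|}$ or its derivative, picking up factors $1/\sqrt{\Real\mu_\pm}$ and $\sqrt{|\mu_\pm|}$ respectively, multiplied by the same $V_\pm$, $nk$ weights as before. Combining these pointwise bounds, Cramer's rule for $(c_+,c_-)$, and the half-line $L^2$ norm $|c_\pm|/\sqrt{\Real\mu_\pm}$, together with the algebraic relations for $u_{h,1},u_{h,2}$, gives the system \eqref{est-uh-thm}.

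The main obstacle I expect is bookkeeping: every term in \eqref{est-uh-thm}--\eqref{Est_up-thm} carries a specific power of $\nu$ combined with a specific fraction of the exponent $(\nu\omega_I+\ga)T$, and these must be reconciled with the four distinct scales in Lemmas \ref{bds_V_pm}--\ref{bd_re_mu_minu_L} together with the $1/\sqrt{\Real\mu_\pm}$ factors from the half-line $L^2$-norms of exponentials. The asymmetry between the sides (the right side uses only $r|_{\R_+}$ because the Green's representation is local per half-line, while the homogeneous part couples both sides through matching) explains the mixed terms $\nu^{1/2} e^{3(\nu\omega_I+\ga)T/4}\|r\|_{L^2(\R_+)}$ that appear in the $\R_-$ estimates. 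The rest is routine: once the Green's representation, the matching system and the lemma asymptotics are in place, the bounds follow by inserting orders into Young's inequality and Cramer's rule.
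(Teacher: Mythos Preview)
Your plan is essentially the paper's own strategy: reduce the $3\times3$ system on each half-line to an ODE involving the decay rates $\mu_\pm$, build $u_p$ from an explicit kernel, fix $u_h$ by the two interface conditions $\llbracket u_2\rrbracket=\llbracket u_3\rrbracket=0$, and then feed in the asymptotics of Lemmas \ref{bds_V_pm}--\ref{bd_re_mu_minu_L}. The only structural difference is that the paper stays with the first-order $2\times2$ system for $(u_2,u_3)$ and applies variation of constants directly (citing an integral estimate from \cite{BDPW25} that is exactly your Young-type bound), whereas you differentiate once more to a scalar second-order equation for $u_3$. The paper's route is slightly cleaner because it never produces the term $\partial_x r_2$, so no integration by parts and no boundary contribution at $y=0$ are needed; in your formulation that boundary term has to be tracked into the matching data, which is fine but adds bookkeeping.

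One concrete slip: in your matching determinant $-(\mu_-/V_-+\mu_+/V_+)$ you claim $\mu_-/V_-$ dominates with order $\nu e^{(\nu\omega_I+\gamma)T/2}$. For $\nu>\gamma/|\omega_I|$ the exponent $(\nu\omega_I+\gamma)T$ is \emph{negative}, so $\mu_-/V_-\sim \nu e^{(\nu\omega_I+\gamma)T/2}\to 0$ while $\mu_+/V_+\sim 1$; hence it is $\mu_+/V_+$ that dominates and the determinant is of order $1$. Equivalently, in the paper's normalisation $\mu_-V_++\mu_+V_-$ it is $\mu_+V_-\sim e^{-(\nu\omega_I+\gamma)T}$ that dominates (see \eqref{den-est}). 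If you carry your stated order through Cramer's rule you will get the exponents in \eqref{est-uh-thm} wrong, so this needs correcting before the ``routine'' part can actually reproduce the stated bounds.
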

\bpf
Given $r\in L^2(\R, \C^3)$, the equation $\cL_{n k}^T(x, \omega)u=r$ reads
\begin{equation}\label{Res_sys}
	\begin{rcases}
		nk u_3- V_{\pm}(n,\nu) u_1 = r_1 \\
		\ri u_3'-  V_{\pm}(n,\nu) u_2= r_2 \\
		\ri u_2' + nku_1 +\omega_0^{(n,\nu)} u_3= r_3.
	\end{rcases}
	\text{ on $\R_\pm$,}
\end{equation}
In most of the following calculations we suppress the explicit dependence of $V_\pm$ and $\mu_\pm$ on $(n,\nu)$.
The first equation in \eqref{Res_sys} implies
\begin{equation}\label{u_1}u_1=\frac{1}{ V_\pm }\left( nk u_3 -r_1\right) \  {\rm ~on~} \R_\pm.\end{equation}
Plugging \eqref{u_1} into the third equation of \eqref{Res_sys}, one obtains
\beq\label{u23}
\left. \begin{array}{rcl}
	u_3 ' +\ri  V_\pm u_2  &=& -\ri r_2 \\
	u_2 '  - \ri  \frac{\omega_0^{(n,\nu )}V_\pm +n^2k^2}{ V_\pm } u_3     &=& -\ri\left( r_3 +\frac{nk}{ V_\pm }r_1\right)
\end{array} \right\} {\rm ~on~} \R_\pm.
\eeq
A unique solution  $(u_2,u_3)^T$ of \eqref{u23} with $u\in D(\cL_{nk}^T)$ exists because $\omega_0^{(n,\nu)}\notin \sigma(\cL_{nk}^T)$ by Proposition \ref{resol_set}. In the proof of Prop. 2.3 in \cite{DH2024} it was shown that $\omega_0^{(n,\nu)}\in \sigma_p(\cL_{nk}^T)\setminus \Omega_0$ is equivalent to $\mu_- V_+ +\mu_+ V_-=0$. This quantity is important in the estimates below. We note that we have 
\beq\label{DR}\mu_-(n,\nu) V_+(n,\nu) +\mu_+(n,\nu) V_-(n,\nu) \neq 0 \quad \forall (n,\nu)\in \mathbf{I}\setminus\{(1,1),(-1,1)\}.\eeq
Moreover, using Lemmas \ref{bds_V_pm}, \ref{bds_V_pm_L}, \ref{bd_abs_mu_minu_L}, we get the following estimate for all   $\nu>\frac{\gamma}{|\omega_I|}$ with $|n|\leq\nu$ 
\begin{align}
	|\mu_- V_+ +\mu_+ V_- |&\geq |\mu_+ V_- |-|\mu_- V_+ |\notag\\
	&\geq c\left(e^{-(\nu\omega_I+\gamma)T}-\nu e^{-(\nu\omega_I+\gamma)T/2}\right)\notag\\
	&\geq \frac{c}{2}e^{-(\nu\omega_I+\gamma)T}.\label{den-est}
\end{align}
Because $T$ is fixed (by \ref{ass:B-rationality}), estimate \eqref{den-est} holds for all $(n,\nu)\in \mathbf{I}$ with a possibly changed value of $c$.

Note that, by variation of constants, the solution of \eqref{u23} is $(u_2,u_3)(x)=(u_{h,2},u_{h,3})(x)+(u_{p,2},u_{p,3})(x)$, where
\begin{align*}
	\begin{pmatrix}
		u_{h,2}\\ u_{h,3}
	\end{pmatrix} (x)= \begin{cases}
		C_+e^{-\mu_+ x}\begin{pmatrix}\mu_+\\ \ri V_+
		\end{pmatrix}, & x\in \R_+,\\
		C_-e^{\mu_- x} \begin{pmatrix}	\mu_-\\ -\ri V_-
		\end{pmatrix}, & x\in\R_-,
	\end{cases}
\end{align*}
\begin{align*}
	\begin{pmatrix}
		u_{p,2}\\ u_{p,3}
	\end{pmatrix}(x) =  \begin{cases}
		\frac{\ri}{2}\begin{pmatrix}
			\mu_+\left(e^{\mu_+x}\int_{x}^{\infty}\rho_+^{(1)}(s)e^{-\mu_+s}\dd s+e^{-\mu_+x}\int_{0}^{x}\rho_+^{(2)}(s)e^{\mu_+s}\dd s\right)\\
			-\ri V_+\left(e^{\mu_+x}\int_{x}^{\infty}\rho_+^{(1)}(s)e^{-\mu_+s}\dd s-e^{-\mu_+x}\int_{0}^{x}\rho_+^{(2)}(s)e^{\mu_+s}\dd s\right)
		\end{pmatrix}, &x\in \R_+,\\
		\frac{\ri}{2}\begin{pmatrix}
			\mu_-\left(e^{\mu_-x}\int_{x}^{0}\rho_-^{(1)}(s)e^{-\mu_-s}\dd s+e^{-\mu_-x}\int_{-\infty}^{x}\rho_-^{(2)}(s)e^{\mu_-s}\dd s\right)\\
			-\ri V_-\left(e^{\mu_-x}	\int_{x}^{0}\rho_-^{(1)}(s)e^{-\mu_-s}\dd s-e^{-\mu_-x}\int_{-\infty}^{x}\rho_-^{(2)}(s)e^{\mu_-s}\dd s\right)
		\end{pmatrix}, & x\in\R_-,
	\end{cases}
\end{align*}
with
$$\mu_\pm=\sqrt{n^2k^2 + (n\omega_R +\ri\nu \omega_I) V_\pm},$$
\beq\label{E:rhos}
\begin{split}
	\rho_\pm^{(1)}(x):=\ri\frac{r_2(x)}{V_\pm}+\frac{nkr_1(x)}{V_\pm\mu_\pm}+\frac{r_3(x)}{\mu_\pm},\\ \rho_\pm^{(2)}(x):=\ri\frac{r_2(x)}{V_\pm}-\frac{nkr_1(x)}{V_\pm\mu_\pm}-\frac{r_3(x)}{\mu_\pm},
\end{split}
\eeq
\begin{align*}
	C_- & = \frac{\ri}{2\left(\mu_-V_++\mu_+V_-\right)}\left[\left(\mu_+V_--\mu_-V_+\right)\int_{-\infty}^{0}\rho_-^{(2)}(s)e^{\mu_-s}\dd s + 2\mu_+V_+\int_{0}^{\infty}\rho_+^{(1)}(s)e^{-\mu_+s}\dd s\right],\\
	C_+&=\frac{\mu_-}{\mu_+}C_-+\frac{\ri \mu_-}{2\mu_+}\int_{-\infty}^{0}\rho_-^{(2)}(s)e^{\mu_-s}\dd s-\frac{\ri}{2}\int_{0}^{\infty}\rho_+^{(1)}(s)e^{-\mu_+s}\dd s,
\end{align*}
where $\omega_0^{(n,\nu)}=n\omega_R+\ri\nu\omega_I$. The constants $C_\pm$ are chosen such hat $u_2$ and $u_3$ are continuous at $x=0$, i.e., such that the interface conditions $ \llbracket u_2\rrbracket=\llbracket u_3\rrbracket=0$. This is required for $u \in D_{\cL}$.
Note that all the denominators in \eqref{E:rhos} and in $C_\pm$ are bounded away from zero by the estimates in Lemmas \ref{bds_V_pm}, \ref{bds_V_pm_L}, and \ref{bd_abs_mu_minu_L}. The denominator $\mu_-V_+ +\mu_+V_-$ is also non vanishing, see \eqref{DR}.
From \eqref{u_1} we also define $u_{p,1}$ and $u_{h,1}$ as
\beq\label{E:u1-expl}
u_{p,1}=\frac{1}{ V_\pm}\left(nk u_{p,3} -r_1\right), \qquad u_{h,1}=\frac{nk}{ V_\pm}u_{h,3} \quad  \text{on} \ \R_\pm \ \text{respectively.}
\eeq

For the $L^2$-estimate first note that Lemma 4.7. of \cite{BDPW25} provides an estimate for all the integral terms in  $u_p$:
\beq\label{E:est-rho-int}
\begin{aligned}
	\left\|e^{\mu_+ \cdot }\int_{\cdot}^{\infty}\rho_+^{(1)}(s)e^{-\mu_+s}\dd s\right\|_{L^2(\R_+)}&\leq \frac{c}{\Real(\mu_+)}\left\|\rho_+^{(1)}\right\|_{L^2(\R_+)}, \\
	\left\|e^{-\mu_+ \cdot }\int_{0}^{\cdot}\rho_+^{(2)}(s)e^{\mu_+s}\dd s\right\|_{L^2(\R_+)}&\leq \frac{c}{\Real(\mu_+)}\left\|\rho_+^{(2)}\right\|_{L^2(\R_+)}, \\
	\left\|e^{-\mu_- \cdot }\int_{-\infty}^{\cdot}\rho_-^{(2)}(s)e^{\mu_-s}\dd s\right\|_{L^2(\R_-)}&\leq \frac{c}{\Real(\mu_-)}\left\|\rho_-^{(2)}\right\|_{L^2(\R_-)}, \\
	\left\|e^{\mu_- \cdot }\int_{\cdot}^{0}\rho_-^{(1)}(s)e^{-\mu_-s}\dd s\right\|_{L^2(\R_-)}&\leq \frac{c}{\Real(\mu_-)}\left\|\rho_-^{(1)}\right\|_{L^2(\R_-)}.
\end{aligned}
\eeq
The $\rho$-terms in \eqref{E:rhos} are estimated via
\beq\label{rho-est}
\|\rho^{(1,2)}_\pm\|_{L^2(\R_\pm)} \leq \frac{c}{|V_\pm|}\left(\|r_2\|_{L^2(\R_\pm)} + \frac{|n|}{|\mu_\pm|}\|r_1\|_{L^2(\R_\pm)}\right) + \frac{1}{|\mu_\pm|}\|r_3\|_{L^2(\R_\pm)},
\eeq
with a constant $c$ independent of $n$ and $\nu$. Thus, using Lemma \ref{bds_V_pm} and Lemmas \ref{bds_V_pm_L}--\ref{bd_re_mu_minu_L},  
$$  \zeta:=\left|\int_{0}^{\infty}\rho_+^{(1)}(s)e^{-\mu_+s}\dd s\right|\leq c \nu^{-3/2}\|r\|_{L^2(\R_+)},$$
$$ \xi:=\left|\int_{-\infty}^{0}\rho_-^{(2)}(s)e^{\mu_-s}\dd s\right|\leq c\left(\nu^{2}e^{\frac74(\nu\omega_I+\ga)T}\|r_1\|_{L^2(\R_-)}+\nu e^{\frac54(\nu\omega_I+\ga)T}\|r_2\|_{L^2(\R_-)}+e^{\frac34(\nu\omega_I+\ga)T}\|r_3\|_{L^2(\R_-)}\right).$$
Consequently, using \eqref{den-est},
\beq\label{C_minus}
\begin{split}
	|C_-|\leq& ce^{(\nu\omega_I+\ga)T}\left(e^{-(\nu\omega_I+\ga)T}\xi+\nu^2\zeta\right)\\
	\leq& c\left[\|r_1\|_{L^2(\R_-)}\nu^2e^{\frac74(\nu\omega_I+\ga)T}+\|r_2\|_{L^2(\R_-)}\nu e^{\frac54(\nu\omega_I+\ga)T}+\|r_3\|_{L^2(\R_-)}e^{\frac34(\nu\omega_I+\ga)T}+\nu^{\frac{1}{2}}e^{(\nu\omega_I+\ga)T}\|r\|_{L^2(\R_+)}\right]
\end{split}
\eeq
and
\beq\label{C_plus}
\begin{split}
	|C_+|\leq& c\left[\nu^{-1}e^{-\frac{1}{2}(\nu\omega_I+\ga)T}(|C_-|+\xi)+\zeta\right]\\
	\leq& c\left[\|r_1\|_{L^2(\R_-)}\nu e^{\frac54(\nu\omega_I+\ga)T}+\|r_2\|_{L^2(\R_-)} e^{\frac34(\nu\omega_I+\ga)T}+\|r_3\|_{L^2(\R_-)}\nu^{-1}e^{\frac14(\nu\omega_I+\ga)T}+\nu^{-\frac{3}{2}}\|r\|_{L^2(\R_+)}\right]
\end{split}
\eeq

Next, we estimate the homogeneous solution $u_h$ and the particular solution $u_p$.  Since $$\|e^{\mp \mu_\pm \cdot}\|_{L^2(\R_\pm)}=(2\Real (\mu_\pm))^{-1/2}$$ and using \eqref{E:est-rho-int}, \eqref{rho-est}, \eqref{C_minus}, and \eqref{C_plus}, as well as Lemma \ref{bds_V_pm}, we obtain, for  all $\nu>\frac{\ga}{|\omega_I|}$ and $|n|\leq\nu$,
\beq\label{est-uh}
\begin{split}
	\|u_{h}\|_{L^2(\R_+)}
	&\leq c\left[\|r_1\|_{L^2(\R_-)}\nu^{\frac{3}{2}}e^{\frac{5}{4}(\nu\omega_I+\ga)T}+\|r_2\|_{L^2(\R_-)}\nu^{\frac{1}{2}}e^{\frac{3}{4}(\nu\omega_I+\ga)T}\right]\\
	&+c\left[\|r_3\|_{L^2(\R_-)}\nu^{-\frac{1}{2}}e^{\frac{1}{4}(\nu\omega_I+\gamma)T}+\nu^{-1}\|r\|_{L^2(\R_+)}\right],\\
	\|u_{h,1}\|_{L^2(\R_-)} &\leq c\left[\|r_1\|_{L^2(\R_-)}\nu^{3}e^{2(\nu\omega_I+\ga)T}+\|r_2\|_{L^2(\R_-)}\nu^{2}e^{\frac{3}{2}(\nu\omega_I+\ga)T}\right]\\
	&+c\left[\|r_3\|_{L^2(\R_-)}\nu e^{(\nu\omega_I+\gamma)T}+\nu^{\frac{3}{2}}e^{\frac{5}{4}(\nu\omega_I+\gamma)T}\|r\|_{L^2(\R_+)}\right],\\
	\|u_{h,2}\|_{L^2(\R_-)} &\leq c\left[\|r_1\|_{L^2(\R_-)}\nu^{2}e^{\frac32(\nu\omega_I+\ga)T}+\|r_2\|_{L^2(\R_-)}\nu e^{(\nu\omega_I+\ga)T}\right]\\
	&+c\left[\|r_3\|_{L^2(\R_-)} e^{\frac12(\nu\omega_I+\gamma)T}+\nu^{\frac{1}{2}}e^{\frac{3}{4}(\nu\omega_I+\gamma)T}\|r\|_{L^2(\R_+)}\right],\\
	\|u_{h,3}\|_{L^2(\R_-)} &\leq c\left[\|r_1\|_{L^2(\R_-)}\nu e^{(\nu\omega_I+\ga)T}+\|r_2\|_{L^2(\R_-)}e^{\frac{1}{2}(\nu\omega_I+\ga)T}\right]\\
	&+c\left[\|r_3\|_{L^2(\R_-)}\nu^{-1} +\nu^{-\frac{1}{2}}e^{\frac{1}{4}(\nu\omega_I+\gamma)T}\|r\|_{L^2(\R_+)}\right],\\
\end{split}
\eeq
and
\beq\label{Est_up}\begin{split}
	\|u_{p,j}\|_{L^2(\R_+)}\leq& ~c\|r\|_{L^2(\R_+)}\nu^{-1},\quad j=1,~2,~3,\\
	\|u_{p,1}\|_{L^2(\R_-)}\leq& ~c\left(2\|r_1\|_{L^2(\R_-)}\nu e^{(\nu\omega_I+\ga)T}+\|r_2\|_{L^2(\R_-)}\nu^2e^{\frac32(\nu\omega_I+\ga)T}+\|r_3\|_{L^2(\R_-)}\nu e^{(\nu\omega_I+\ga)T}\right),\\
	\|u_{p,2}\|_{L^2(\R_-)}\leq& ~c\left(\|r_1\|_{L^2(\R_-)}\nu^2 e^{\frac32(\nu\omega_I+\ga)T}+\|r_2\|_{L^2(\R_-)}\nu e^{(\nu\omega_I+\ga)T}+\|r_3\|_{L^2(\R_-)} e^{\frac12(\nu\omega_I+\ga)T}\right),\\
	\|u_{p,3}\|_{L^2(\R_-)}\leq& ~c\left(\|r_1\|_{L^2(\R_-)}\nu e^{(\nu\omega_I+\ga)T}+\|r_2\|_{L^2(\R_-)}e^{\frac12(\nu\omega_I+\ga)T}+\|r_3\|_{L^2(\R_-)}\nu^{-1}\right).
\end{split}\eeq
Estimates \eqref{est-uh} and \eqref{Est_up} yield \eqref{est-uh-thm} and \eqref{Est_up-thm}, respectively.
\epf

Next, we estimate the inverse of the operator pencil $\cL_{nk}^T(\cdot, n\omega_R+\ri \nu\omega_I)$ in the norm $\|\cdot\|_{\cH^1\to \cH^1}$ in order to check assumption \ref{ass:A-resolvEst}.
\begin{thm}\label{resol_H1}
	Under the assumptions of Theorem \ref{resol_L2} the solution $u$ of $\cL_{nk}^T(\cdot, n\omega_R+\ri \nu\omega_I)u=r\in L^2(\R,\C^3)$ can be decomposed into a homogeneous part $u_h$ and a particular part $u_p$ satisfying
	\beq\label{Est-uh-H1}
	\begin{split}
		\left\|u_{h}\right\|_{H^1(\R_+)}&\leq c\left(\|(r_1,r_2)\|_{L^2(\R_-)} \nu^{\frac32}e^{\frac34(\nu\omega_I+\gamma)T} +\|r_3\|_{L^2(\R_-)}\nu^{\frac12}e^{\frac14(\nu\omega_I+\gamma)T} +\|r\|_{L^2(\R_+)}\right), \\
		\left\|u_{h,1}\right\|_{H^1(\R_-)}&\leq c\left(\|(r_1,r_2)\|_{L^2(\R_-)} \nu^{2}e^{(\nu\omega_I+\gamma)T}+\|r_3\|_{L^2(\R_-)}\nu e^{\frac12(\nu\omega_I+\gamma)T}
		+\|r\|_{L^2(\R_+)} \nu^{\frac{3}{2}}e^{\frac{3}{4}(\nu\omega_I+\ga)T}\right),\\
		\left\|u_{h,2}\right\|_{H^1(\R_-)}&\leq c\left(\|(r_1,r_2)\|_{L^2(\R_-)}\nu e^{\frac12(\nu\omega_I+\gamma)T}+\|r_3\|_{L^2(\R_-)}
		+ \|r\|_{L^2(\R_+)} \nu^{\frac{1}{2}}e^{\frac{1}{4}(\nu\omega_I+\ga)T}\right),\\
		\left\|u_{h,3}\right\|_{H^1(\R_-)}&\leq c\left(\|(r_1,r_2)\|_{L^2(\R_-)} +\|r_3\|_{L^2(\R_-)} \nu^{-1} e^{-\frac12(\nu\omega_I+\gamma)T} + \|r\|_{L^2(\R_+)}\nu^{-\frac{1}{2}}e^{-\frac{1}{4}(\nu\omega_I+\ga)T}\right),
	\end{split}
	\eeq
	and
	\beq\label{Est-up-H1}
	\begin{split}
		\left\|u_{p,1}\right\|_{H^1(\R_+)}&\leq c\|r\|_{H^1(\R_+)},\\
		\left\|u_{p,j}\right\|_{H^1(\R_+)}&\leq c\|r\|_{L^2(\R_+)}, \quad j=2,3,\\
		\left\|u_{p,1}\right\|_{H^1(\R_-)}&\leq c\left(\|(r_1,r_2)\|_{H^1(\R_-)} \nu^{2}e^{(\nu\omega_I+\gamma)T}+\|r_3\|_{L^2(\R_-)}\nu e^{\frac12(\nu\omega_I+\gamma)T}\right),\\
		\left\|u_{p,2}\right\|_{H^1(\R_-)}&\leq c\left(\|(r_1,r_2)\|_{L^2(\R_-)}\nu e^{\frac12(\nu\omega_I+\gamma)T}+\|r_3\|_{L^2(\R_-)}\right),\\
		\left\|u_{p,3}\right\|_{H^1(\R_-)}&\leq c\left(\|(r_1,r_2)\|_{L^2(\R_-)} +\|r_3\|_{L^2(\R_-)} \nu^{-1} e^{-\frac12(\nu\omega_I+\gamma)T}\right),
	\end{split}
	\eeq
	for all $(n,\nu )\in \mathbf{I}$ with $\nu>\gamma/|\omega_I|$ with some constant $c>0$ independent of $\nu$, $n$.
\end{thm}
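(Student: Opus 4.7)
The plan is to derive the $H^1$-estimates \eqref{Est-uh-H1}--\eqref{Est-up-H1} directly from the $L^2$-estimates of Theorem \ref{resol_L2} by exploiting the pure-exponential structure of $u_h$ and, for $u_p$, by using the ODE system itself to express derivatives in terms of $u$ and $r$. Throughout, the size bounds for $|V_\pm|$, $|\mu_\pm|$, and $\Real \mu_\pm$ collected in Lemmas \ref{bds_V_pm}, \ref{bds_V_pm_L}, \ref{bd_abs_mu_minu_L}, and \ref{bd_re_mu_minu_L} will provide the conversion between $L^2$-bounds and $H^1$-bounds in powers of $\nu$ and exponentials $e^{(\nu\omega_I+\gamma)T/2}$.

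For $u_h$, each component is of the form $\mathrm{const}\cdot e^{\mp\mu_\pm x}$ on $\R_\pm$, with $u_{h,1}$ proportional to $u_{h,3}$ via \eqref{E:u1-expl}. Consequently $\|u_{h,j}'\|_{L^2(\R_\pm)}=|\mu_\pm|\,\|u_{h,j}\|_{L^2(\R_\pm)}$ and hence $\|u_{h,j}\|_{H^1(\R_\pm)}\leq c(1+|\mu_\pm|)\,\|u_{h,j}\|_{L^2(\R_\pm)}$. On $\R_+$ one has $|\mu_+|\sim\nu$ by Lemma \ref{bds_V_pm}, and on $\R_-$ one has $|\mu_-|\sim e^{-(\nu\omega_I+\gamma)T/2}$ by Lemma \ref{bd_abs_mu_minu_L}; multiplying the four $L^2$-bounds in \eqref{est-uh-thm} by these factors reproduces \eqref{Est-uh-H1}.

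For $u_p$ the ODE system gives on $\R_\pm$ the identities $u_{p,3}'=-\ri V_\pm u_{p,2}-\ri r_2$, $u_{p,2}'=\ri(\mu_\pm^2/V_\pm)u_{p,3}-\ri r_3 - \ri(nk/V_\pm)r_1$, and, from the first line of \eqref{Res_sys}, $u_{p,1}'=(nk/V_\pm)u_{p,3}'-(1/V_\pm)r_1'$, where I used $\omega_0^{(n,\nu)}V_\pm+n^2k^2=\mu_\pm^2$. Taking $L^2$-norms and plugging in the $L^2$-bounds \eqref{Est_up-thm} together with the estimates of $|V_\pm|$, $|\mu_\pm^2|/|V_\pm|$ and $1/|V_\pm|$ from Lemmas \ref{bds_V_pm}--\ref{bd_re_mu_minu_L} yields \eqref{Est-up-H1}. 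The appearance of $\|r_1\|_{H^1(\R_\pm)}$ in the bound on $\|u_{p,1}\|_{H^1(\R_\pm)}$ arises exactly from the $r_1'$-term in $u_{p,1}'$, while no analogous loss is present for $u_{h,1}$ since the homogeneous equation has no source.

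The main obstacle is the bookkeeping of the various powers of $\nu$ and exponentials $e^{(\nu\omega_I+\gamma)T/2}$ so that each right-hand side of \eqref{Est-uh-H1}--\eqref{Est-up-H1} comes out exactly as stated; in particular one needs to check that the dominant amplification $|\mu_-^2|/|V_-|\sim\nu$ applied to the $L^2$-bound of $u_{p,3}$ on $\R_-$ does not introduce terms exceeding those already present. This reduces to the uniform boundedness of $\nu\,e^{(\nu\omega_I+\gamma)T/2}$ for $(n,\nu)\in\mathbf{I}_b$, which follows from $\nu\omega_I+\gamma<0$ in this range (cf.\ \ref{ass:B-omIfarFromGamma}).
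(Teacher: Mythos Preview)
Your proposal is correct. The treatment of $u_h$ is identical to the paper's: both use $u_{h,j}'=\mp\mu_\pm u_{h,j}$ on $\R_\pm$ and multiply the $L^2$-bounds \eqref{est-uh-thm} by $(1+|\mu_\pm|)$.

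For $u_p$ you take a genuinely different route. The paper differentiates the explicit variation-of-constants integrals for $u_{p,2},u_{p,3}$, producing expressions of the form $\mu_\pm^2\cdot(\text{integral terms})+\mu_\pm\cdot(\rho\text{-terms})$, and then re-applies the kernel estimates \eqref{E:est-rho-int} and \eqref{rho-est}. You instead read off $u_{p,3}',u_{p,2}',u_{p,1}'$ directly from the ODE \eqref{u23} and \eqref{E:u1-expl} (which $u_p$ satisfies on each half-line as a particular solution) and substitute the already-established $L^2$-bounds \eqref{Est_up-thm}. This is shorter and avoids revisiting the integral representation; the only extra step, which you correctly isolate, is absorbing the $(nk/V_-)r_1$ contribution in $u_{p,2}'$ (of size $\nu^2 e^{(\nu\omega_I+\gamma)T}\|r_1\|$) into the stated bound via the uniform boundedness of $\nu e^{(\nu\omega_I+\gamma)T/2}$ on $\mathbf{I}_b$. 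The paper's approach never needs this absorption because the integral-kernel estimate gains an additional factor $1/\Real\mu_-$ that you forgo. Both arrive at the same final bounds.
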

\bpf Again we suppress the dependence of $\mu_\pm$ and $V_\pm$ on $(n,\nu)$.
The homogeneous solution $u_h$ satisfies $\frac{du_h}{dx}=\mp\mu_\pm u_h$ on $\R_\pm$ and hence using \eqref{est-uh}, one obtains
\[\begin{split}\left\|u_{h}\right\|_{H^1(\R_+)}&\leq c\left(\|r_1\|_{L^2(\R_-)}\nu^{\frac52}e^{\frac54(\nu\omega_I+\gamma)T}+\|r_2\|_{L^2(\R_-)}\nu^{\frac32}e^{\frac34(\nu\omega_I+\gamma)T}+\|r_3\|_{L^2(\R_-)}\nu^{\frac12}e^{\frac14(\nu\omega_I+\gamma)T}\right)\\
	&+c\|r\|_{L^2(\R_+)},\end{split}\]
\[\begin{split}\left\|u_{h,1}\right\|_{H^1(\R_-)}&\leq c\left(\|r_1\|_{L^2(\R_-)}\nu^{3}e^{\frac32(\nu\omega_I+\gamma)T}+\|r_2\|_{L^2(\R_-)}\nu^{2}e^{(\nu\omega_I+\gamma)T}+\|r_3\|_{L^2(\R_-)}\nu e^{\frac12(\nu\omega_I+\gamma)T}\right)\\
	&+c\|r\|_{L^2(\R_+)}\nu^{\frac{3}{2}}e^{\frac{3}{4}(\nu\omega_I+\ga)T},\end{split}\]
\[\begin{split}\left\|u_{h,2}\right\|_{H^1(\R_-)}&\leq c\left(\|r_1\|_{L^2(\R_-)}\nu^{2}e^{(\nu\omega_I+\gamma)T}+\|r_2\|_{L^2(\R_-)}\nu e^{\frac12(\nu\omega_I+\gamma)T}+\|r_3\|_{L^2(\R_-)}\right)\\
	&+c\|r\|_{L^2(\R_+)}\nu^{\frac{1}{2}}e^{\frac{1}{4}(\nu\omega_I+\ga)T},\end{split}\]
\[\begin{split}\left\|u_{h,3}\right\|_{H^1(\R_-)}&\leq c\left(\|r_1\|_{L^2(\R_-)}\nu e^{\frac12(\nu\omega_I+\gamma)T}+\|r_2\|_{L^2(\R_-)}+\|r_3\|_{L^2(\R_-)}\nu^{-1} e^{-\frac12(\nu\omega_I+\gamma)T}\right)\\
	&+c\|r\|_{L^2(\R_+)}\nu^{-\frac{1}{2}}e^{-\frac{1}{4}(\nu\omega_I+\ga)T},\end{split}\]
for $\nu>\frac{\ga}{|\omega_I|}$, which implies \eqref{Est-uh-H1}.

It remains to estimate $\|\tfrac{d}{dx}u_p\|_{L^2(\R_\pm)}$. For $x>0$ the derivative is given by
\[\begin{split}
	\frac{du_{p,2}}{dx}(x)&=\frac{\ri\mu_+^2}{2}\left(e^{\mu_+x}\int_{x}^{\infty}\rho_+^{(1)}(s)e^{-\mu_+s}\dd s-e^{-\mu_+x}\int_{0}^{x}\rho_+^{(2)}(s)e^{\mu_+s}\dd s\right) +\frac{\ri \mu_+}{2}\left( \rho_+^{(2)}(x) -\rho_+^{(1)}(x)\right),\\
	\frac{du_{p,3}}{dx}(x)&=\frac{1}{2} V_+\mu_+\left(e^{\mu_+x}\int_{x}^{\infty}\rho_+^{(1)}(s)e^{-\mu_+s}\dd s+e^{-\mu_+x}\int_{0}^{x}\rho_+^{(2)}(s)e^{\mu_+s}\dd s\right)\\& -\frac{1}{2}V_+\left( \rho_+^{(2)}(x) +\rho_+^{(1)}(x)\right),\\
	\frac{du_{p,1}}{dx}(x)&=\frac{1}{V_+}\left( nk\frac{du_{p,3}}{dx}(x) -\frac{dr_1}{dx}(x)\right).
\end{split}\]
Using \eqref{E:est-rho-int} and \eqref{rho-est}, and in the second step Lemma \ref{bds_V_pm}, we obtain the following estimates
\[\begin{split}
	\left\|\frac{du_{p,1}}{dx}\right\|_{L^2(\R_+)}\leq c\left(\nu^{-1}\left\|r_1'\right\|_{L^2(\R_+)}+\|r\|_{L^2(\R_+)}\right),\quad
	\left\|\frac{du_{p,j}}{dx}\right\|_{L^2(\R_+)}\leq  c\|r\|_{L^2(\R_+)},\quad j=2,~3.
\end{split}\]
In an analogous way we get the following estimates for $\left\|\tfrac{d}{dx}u_p\right\|_{L^2(\R_-)}$
\[\begin{split}
	\left\|\frac{du_{p,1}}{dx}\right\|_{L^2(\R_-)}
	&\leq  c\left(\nu^3e^{\frac 32(\nu\omega_I+\ga)T}\left\|r_1\right\|_{L^2(\R_-)}+\nu^2e^{(\nu\omega_I+\ga)T}\|r_2\|_{L^2(\R_-)}+\nu e^{\frac{1}{2}(\nu\omega_I+\ga)T}\|r_3\|_{L^2(\R_-)}\right)\\
	&+c\nu e^{(\nu\omega_I+\ga)T}\|r'_1\|_{L^2(\R_-)},\\
	\left\|\frac{du_{p,2}}{dx}\right\|_{L^2(\R_-)}&\leq c\left(\nu^2 e^{(\nu\omega_I+\ga)T}\|r_1\|_{L^2(\R_-)}+\nu e^{\frac{1}{2}(\nu\omega_I+\ga)T}\|r_2\|_{L^2(\R_-)}+\|r_3\|_{L^2(\R_-)}\right),\\
	\left\|\frac{du_{p,3}}{dx}\right\|_{L^2(\R_-)}
	&\leq c\left( \nu e^{\frac{1}{2}(\nu\omega_I+\ga)T}\|r_1\|_{L^2(\R_-)}+\|r_2\|_{L^2(\R_-)}+\nu^{-1}e^{-\frac{1}{2}(\nu\omega_I+\ga)T}\|r_3\|_{L^2(\R_-)}\right).
\end{split}\]
Together with Theorem \ref{resol_L2} we obtain \eqref{Est-up-H1}. This concludes the proof of Theorem \ref{resol_H1}.
\epf

We are now ready to show that assumption \ref{ass:A-resolvEst} of Theorem \ref{T:main} is satisfied by our example.

\begin{cor}
	\label{cor:summary_example}
	Consider the Maxwell interface problem with the linear permittivity given by the Lorentz model \eqref{E:Lorentz-interf}-\eqref{E:Lnk-DL} under the assumptions \ref{ass:B-signOfOmega}--\ref{ass:B-rationality} and the nonlinear susceptibilities given by \eqref{E:chi23-ex} with $T_N< \frac{1}{2\sqrt{3}}T$. 
	Then assumption \ref{ass:A-resolvEst} of Theorem \ref{T:main} holds.
\end{cor}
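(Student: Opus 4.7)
The plan is to combine Proposition \ref{resol_set} with the explicit resolvent estimates of Theorem \ref{resol_H1} to exhibit functions $f_{1,2}^\pm, f_3: \N \to \R_+$ fitting the framework of \ref{ass:A-resolvEst}. Unique solvability of $\cL_{nk}^T(\omega_0^{(n,\nu)})u = r$ in $D_\cL\cap \cH^1$ is already provided by Proposition \ref{resol_set} together with the variation-of-constants construction in the proof of Theorem \ref{resol_L2}. Writing $u = u_h + u_p$ and summing the inequalities \eqref{Est-uh-H1}--\eqref{Est-up-H1} component-wise, I would read off $f_{1,2}^\pm$ and $f_3$ as the worst coefficients across the bounds on $\R_+$ and $\R_-$. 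Since $\chi^{(2,3)}_+ = 0$ by \eqref{E:chi23-ex}, the statement of \ref{ass:A-resolvEst} imposes no condition on $f_{1,2}^+$; only the decay of $f_{1,2}^-$ in $\nu$ needs to be verified.

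The central calculation is the bound $f_1^-(\nu) \leq M\nu^{-5}e^{-\sqrt{3}\nu|\omega_I|T_N}$. Inspecting the coefficients of $\|(r_1,r_2)\|_{H^1(\R_-)}$ appearing in the bounds for $\|(u_1,u_2)\|_{H^1(\R_\pm)}$, the candidates are $\nu^{3/2}e^{\frac{3}{4}(\nu\omega_I+\gamma)T}$ (from $\|u_h\|_{H^1(\R_+)}$) and $\nu^{2}e^{(\nu\omega_I+\gamma)T}$ (from $\|u_{h,1}\|_{H^1(\R_-)}$ and $\|u_{p,1}\|_{H^1(\R_-)}$). For $\nu>\gamma/|\omega_I|$ large, the first term dominates; verifying the requested bound for it reduces to
\[
\nu^{13/2}\, e^{-\nu|\omega_I|\left(\frac{3}{4}T - \sqrt{3}T_N\right) + \frac{3}{4}\gamma T} \leq M.
\]
The hypothesis $T_N < \frac{1}{2\sqrt{3}}T$ yields $\frac{3}{4}T - \sqrt{3}T_N > \frac{1}{4}T > 0$, so the exponential decay dominates the polynomial growth and the inequality holds uniformly in $\nu$ upon enlarging $M$. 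The analogous check for the $\nu^2 e^{(\nu\omega_I+\gamma)T}$ contribution is less stringent and again uses $T - \sqrt{3}T_N > 0$. The condition $f_2^-(\nu) \leq Me^{\alpha\nu}$ is trivial: the coefficients of $\|(r_1,r_2)\|_{H^1(\R_-)}$ multiplying $u_3$ in \eqref{Est-uh-H1}--\eqref{Est-up-H1} are at most $\nu^{3/2}e^{\frac{3}{4}(\nu\omega_I+\gamma)T}$ on $\R_+$ and a constant on $\R_-$, both bounded for $\nu>\gamma/|\omega_I|$, so $\alpha=0$ suffices.

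Finally, Theorems \ref{resol_L2} and \ref{resol_H1} deliver the above bounds only for $\nu > \gamma/|\omega_I|$, i.e., on $\mathbf{I}_b$. On the finite complementary set $\mathbf{I}_a \setminus \{(1,1),(-1,1)\}$ I would use Lemmas \ref{bds_V_pm}, \ref{bds_V_pm_L}, \ref{bd_abs_mu_minu_L}, and \ref{bd_re_mu_minu_L}, which give uniform two-sided bounds on $V_\pm$, $|\mu_\pm|$, and $\Real\mu_\pm$, together with the non-vanishing of $\mu_-V_+ + \mu_+V_-$ guaranteed by Proposition \ref{resol_set}, to rerun the variation-of-constants argument and obtain a single $(n,\nu)$-independent $\cH^1$-bound. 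Enlarging $M$ to absorb this finite family extends all inequalities to $(n,\nu)\in\mathbf{I}\setminus\{(1,1),(-1,1)\}$, verifying \ref{ass:A-resolvEst}. I expect the main obstacle to be purely bookkeeping: tracking the eight different $H^1$-coefficients in \eqref{Est-uh-H1}--\eqref{Est-up-H1}, locating the dominant one, and matching the hypothesis $T_N < T/(2\sqrt{3})$ to the exponent arithmetic; no new analytic input beyond Theorem \ref{resol_H1} is needed.
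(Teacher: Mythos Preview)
Your overall strategy is exactly the paper's: read off $f_1^\pm,f_2^\pm,f_3$ from Theorem~\ref{resol_H1}, note that $\chi^{(2,3)}_+=0$ makes $f_{1,2}^+$ irrelevant, verify the decay condition on $f_1^-$ using the hypothesis on $T_N$, and absorb the finite set $\mathbf{I}_a$ into the constant. The treatment of $f_2^-$ and of $\mathbf{I}_a$ is fine.

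There is, however, a bookkeeping slip in your identification of $f_1^-$. You list as candidates only $\nu^{3/2}e^{\frac{3}{4}(\nu\omega_I+\gamma)T}$ (from $\|u_h\|_{H^1(\R_+)}$) and $\nu^{2}e^{(\nu\omega_I+\gamma)T}$ (from $u_1$ on $\R_-$), but you miss the coefficient $\nu\, e^{\frac{1}{2}(\nu\omega_I+\gamma)T}$ that multiplies $\|(r_1,r_2)\|_{L^2(\R_-)}$ in the bounds for $\|u_{h,2}\|_{H^1(\R_-)}$ and $\|u_{p,2}\|_{H^1(\R_-)}$ in \eqref{Est-uh-H1}--\eqref{Est-up-H1}. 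Since $\nu\omega_I+\gamma<0$ on $\mathbf{I}_b$, the factor $e^{\frac{1}{2}(\nu\omega_I+\gamma)T}$ decays \emph{more slowly} than either $e^{\frac{3}{4}(\nu\omega_I+\gamma)T}$ or $e^{(\nu\omega_I+\gamma)T}$, so this is the true dominant term; the paper accordingly takes $f_1^-(\nu)=c\,\nu\, e^{\frac{1}{2}(\nu\omega_I+\gamma)T}$. The required inequality then becomes
\[
\nu^{6}\,e^{-\nu|\omega_I|\left(\tfrac{1}{2}T-\sqrt{3}T_N\right)+\tfrac{1}{2}\gamma T}\leq M,
\]
and it is precisely here that the hypothesis $T_N<\tfrac{1}{2\sqrt{3}}T$ is used sharply (your exponent $\tfrac{3}{4}T-\sqrt{3}T_N$ would only require the weaker $T_N<\tfrac{\sqrt{3}}{4}T$). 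Once you include the $u_2$ contribution on $\R_-$, the rest of your argument goes through unchanged.
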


\begin{proof}
	From Theorem \ref{resol_H1} we get (for some $c>0$)
	\begin{align*}
		f_1^+(\nu)&=c, &f_1^-(\nu) &= c\nu e^{\frac{1}{2}(\nu\omega_I+\gamma)T},\\
		f_2^+(\nu)&=c\nu^{-\frac{1}{2}}e^{-\frac{1}{4}(\nu\omega_I+\gamma)T}, \hspace{-3cm} &f_2^-(\nu)&=c,\\
		f_3(\nu)&=c\nu^{-1}e^{-\frac{1}{2}(\nu\omega_I+\gamma)T},
	\end{align*}
	for all $\nu>\frac{\gamma}{|\omega_I|}$, where we collected the slowest exponential decay between all the contributing estimates in \eqref{Est-uh-H1} and \eqref{Est-up-H1}. Note that for $\nu < \gamma/|\omega_I|$ the same functions $f_1^+, f_2^\pm, f_3$ as above can be chosen after possibly modifying the constant $c$. The values of $f_1^+$ and $f_2^+$ are irrelevant because $\chi^{(2)}_+=\chi^{(3)}_+=0$ in  \eqref{E:chi23-ex}. For $f_1^-$ we see (using $\omega_I<0$) that 
	$$f_1^-(\nu)\nu^5e^{\sqrt{3}\nu|\omega_I|T_N}=c\nu^6e^{-(\frac{1}{2}T-\sqrt{3}T_N)|\omega_I|\nu}\leq M,~~\forall \nu\in\N,$$
	with $c>0$ independent of $\nu$, due to $\frac{1}{2}T-\sqrt{3}T_N>0$.
	The constant $f_2^-\equiv 1$ obviously satisfies
	$f_2^-(\nu)\leq Me^{\alpha\nu}$ $\forall \nu\in\N$ with $M=c$ and any $\alpha \geq 0$.
\end{proof}

\brem
Note that $f_1^+$ does not decay in $\nu$ because the medium in $\R_+$ is non-dispersive. It is this lack of decay which forces us to choose a linear material in $\R_+$, i.e., $\chi^{(2)}_+=\chi^{(3)}_+=0$.
\erem

With this assumptions \ref{ass:A-cpctSupport}--\ref{ass:A-resolvEst} of Theorem \ref{T:main} have been verified and the proof of Theorem \ref{T:Lor-example} is complete. \qed

\subsection{Numerical Example}
\label{S:numerics}

In this section we present our numerical computation of a polychromatic solution to Maxwell's equations as given by Theorem \ref{T:Lor-example}. We choose an interface between a dielectric and a Lorentz metal modeled by \eqref{E:perm-LT} and \eqref{E:chi23-ex} with the following parameters
\beq \label{eqn:parameters}
\alpha=2,~ c_L=20,~ \omega_*=2,~ \gamma=0.5,~ k=3, \text{ and } T=1001\frac{\pi}{c_*}.
\eeq
The non-linear susceptibilities on each half space will be given by \eqref{E:chi23_trunc} with parameters 
\beq \label{eqn:parameters-NL}
c^{(2)}_{j,p,q}=2000~\delta_{j,p,q},~c^{(3)}_{j,p,q,r}=1000~\delta_{j,p,q,r},~\tilde{\gamma}=1, \text{ and }\tilde{\omega}_*=3.
\eeq

The values for $c^{(2)}_{j,j,j}$ and $c^{(3)}_{j,j,j,j}$ are chosen rather large, in order to clearly make the numerical solution of the nonlinear problem visually different from the eigenfunction of the linear problem in Figure \ref{fig:num:solutions}. Note, that the factors $c^{(2)}$ and $c^{(3)}$ directly scale the contribution of the nonlinear terms in the solution. 

\medskip
\noindent
\textbf{Verification of Assumptions \ref{ass:B-signOfOmega}--\ref{ass:B-formOfT}}

We check only \ref{ass:B-signOfOmega}--\ref{ass:B-formOfT} because the rationality condition in \ref{ass:B-rationality} obviously cannot be checked numerically. 

The eigenvalues outside of the set $\Omega_0$ of the corresponding untruncated operator are the solutions of the dispersion relation \eqref{E:ev.cond}, which is a polynomial equation of degree four and can thus
be explicitly solved by computer algebra programs, yielding four distinct eigenvalues. In view of Lemma \ref{lem:evals_lorentz_close}, we may choose any of those values as a good approximation
for an eigenvalue of the truncated operator, we choose $\omega^\infty_0=\omega_{R,\infty} + \ri \omega_{I,\infty} \approx 1.8179 - 0.1488\ri$. A standard application of Newton's method then produces (up to a tolerance of $10^{-16}$) 
$\omega^\infty_0$ also as an eigenvalue of the truncated operator with the parameters from \eqref{eqn:parameters}.

Most of assumption \ref{ass:B-signOfOmega} is easily seen from the eigenvalue, while the simplicity of $\omega^\infty_0$ as a zero of the dispersion relation follows from the fact, that it is a fourth degree polynomial with four distinct roots. Also, $\frac{\gamma}{|\omega_{I,\infty}|}=3.3602$ is sufficiently far away from a natural number to verify \ref{ass:B-omIfarFromGamma}. Checking \ref{ass:B-Vmin}--\ref{ass:B-nonResonance} requires calculating some quantities for finitely many values of $(n,\nu)\in\mathbf{I}$, namely those with $\nu<\frac{\ga}{|\omega_{I,\infty}|},\ |n|\leq\nu$. Doing so,
and taking the minimal value over all relevant pairs $(n,\nu)$, we arrive at the data shown in Table \ref{tab:numerics}. All quantities are far enough from zero, verifying the assumptions.
\begin{table}[h]
	\centering
	\begin{tabular}{c|c c c}
		Quantity & $\mu_{-,\infty}^2(n,\nu)$ & $V_\infty^{(a)}(n,\nu)$ & $\dist (n\omega_{R,\infty}+\ri\nu\omega_{I,\infty},\sigma_p(\cL_{nk}^\infty))$ \\
		\hline \\
		$\min\limits_{\substack{(n,\nu)\in\mathbf{I}\\ \nu <\gamma /|\omega_I|}}$ & $0.0477$ & $0.3207$ & $0.1488$
	\end{tabular}
	\caption{Numerically computed minimal values of the quantities relevant for Assumptions \ref{ass:B-Vmin}--\ref{ass:B-nonResonance}, for the case of example \eqref{E:perm-LT} with parameters \eqref{eqn:parameters}.}
	\label{tab:numerics}
\end{table}

Finally, condition \ref{ass:B-formOfT} clearly holds for the above choice of $T$. 

\medskip
\noindent
\textbf{Computation of an Approximate Polychromatic Solution}

To calculate a polychromatic solution to the Maxwell equations, we proceed iteratively, analogously to the proof of Theorem \ref{T:main}. Setting $r^{n,\nu}:=\ri h^{n,\nu}(\cdot,\omega_0,\vec{u})$ (where $r^{(n,\nu)}_3=0$ due to the form of $h^{n,\nu}$, see \eqref{eqn:operatorFormulation}-\eqref{eqn:nonlinearity}) and $\omega_0^{(n,\nu)}:=n\omega_R+\ri\nu\omega_I$ in 
\eqref{eqn:operatorFormulation}, we solve repeatedly the equations 
\begin{equation}\label{eqn:num:full_sys}
	\begin{rcases}
		\ri nk u_3-\ri V_{\pm}(n,\nu)u_1 &= r_1^{n,\nu} \\
		-u_3'-\ri  V_{\pm}(n,\nu) u_2&= r_2^{n,\nu} \\
		u_2'-\ri nku_1-\ri\omega_0^{(n,\nu)} u_3&=0
	\end{rcases}
	\text{ on $\R_\pm$}
\end{equation}
with the interface conditions $\llbracket u_2\rrbracket=\llbracket u_3\rrbracket =0$ to ensure a solution $u\in D_\cL$. The third equation in \eqref{eqn:num:full_sys} can be solved for $u_3$ to obtain an effective system of two equations. In the case $n=0$ we obtain decoupled equation for $u_1$ and $u_2$, namely 

\begin{equation}
	\label{eqn:num:effective_n0}
	\begin{aligned}
		u_1 &= \frac{\ri}{V_{\pm}(0,\nu)}r^{0,\nu}_1\\
		-u_2'' +\omega_0^{(n,\nu)}V_\pm(0,\nu) u_2 &=\ri\omega_0^{(0,\nu)}r^{0,\nu}_2
	\end{aligned}
\end{equation}
with the interface conditions $\llbracket u_2\rrbracket = \llbracket u_2'\rrbracket=0$, i.e. $u_2$ continuously differentiable. In this case, $u_1$ is given explicitly, while the equation for $u_2$ can be solved by a standard finite difference method. In the case $nk\neq 0$ we get

\begin{equation}
	\label{eqn:num:effective-system}
	\begin{rcases}
		u_2' -\ri \left(\frac{\omega_0^{(n,\nu)} V_\pm (n,\nu)}{nk} +nk\right)u_1 =\frac{\omega_0^{(n,\nu)}}{nk} r^{n,\nu}_1\\
		-u_2''+\ri nku_1' +\omega_0^{(n,\nu)} V_\pm (n,\nu)u_2 =\ri\omega_0^{(n,\nu)} r^{n,\nu}_2
	\end{rcases}
	\text{ on $\R_\pm$}
\end{equation}
with the interface conditions $\llbracket u_2\rrbracket =\llbracket \ri nk u_1 -u_2'\rrbracket =0$. We note that $u_1$ is generally not continuous across the interface. The numerics for this ODE system require some care, as for Maxwell systems, so called spectral pollution is known to occur if a standard discretization is used, see \cite{LABV1990}. We describe our procedure in Appendix \ref{app:num}. It is based on the staggered grid $x_j := -d + jh, j = 0, 1, ...,N$ and $\tilde{x}_j = -d + (j + 1/2 )h$, $j = 0, 1, ...,N - 1$ on the interval $[-d,d]$.

The construction of the polychromatic solution then follows the proof of Theorem \ref{T:main} and is described next. The eigenfunctions of $\cL^T_{k}$ for the eigenvalue $\omega_0=\omega_R+\ri\omega_I$ are known explicitly. They are multiples of
\beq\label{E:efn}
\varphi(x):=\begin{cases}
	\begin{pmatrix}
		-\ri k e^{\mu_-(1,1)x}\\
		\mu_- e^{\mu_-(1,1) x}\\
		-\ri V_-(1,1) e^{\mu_-(1,1) x}
	\end{pmatrix},~~&x< 0,\\
	\begin{pmatrix}
		\ri k \frac{\mu_-(1,1)}{\mu_+(1,1)}e^{-\mu_+(1,1)x}\\
		\mu_- e^{-\mu_+(1,1) x}\\
		\ri \frac{\mu_-(1,1)}{\mu_+(1,1)} V_+(1,1) e^{-\mu_+(1,1) x}
	\end{pmatrix}, &x>0,
\end{cases}
\eeq
see \cite{BDPW25}. We start by choosing some $\varepsilon>0$ and setting 
$$\bU^{(1,1)}_j:=\varepsilon\varphi_1(x_j),~\bV^{(1,1)}_j:=\varepsilon\varphi_2(\tilde{x}_j),~\bU^{(-1,1)}_j:=\overline{\bU^{(1,1)}_j},~\bV^{(-1,1)}_j:=\overline{\bV^{(1,1)}_j}.$$
Next, we construct a discretized version of the right hand side in \eqref{eqn:num:effective-system} for $\nu=3, n=0,1,2,3$, and then solve the resulting linear system of equations, producing $\bU^{(n,3)}_j,\bV^{(n,3)}_j$ for $n=0,1,2,3$ (and $\bU^{(-n,3)}_j,\bV^{(-n,3)}_j$ for $n=1,2$ by complex conjugation).
We then proceed iteratively to compute the components $\bU^{(n,\nu)}_j,\bV^{(n,\nu)}_j$ up to some predefined value of $\nu\in\N$ and construct an approximation of the solution $\psi$ in \eqref{E:Maxw-sol} as the discretization of the corresponding partial sum
\beq\label{E:part-sum}
\psi^{(M)}(x,y,t):= \psi(x,y,t):=\sum_{\nu=1}^M\sum_{n\in \Z, |n|\leq \nu}u^{n,\nu}(x)e^{-\ri n(\omega_R t -ky)}e^{\nu \omega_I t}. \eeq
Figure \ref{fig:num:solutions} shows such an approximation for $M=15$ along with the eigenfunction $\varphi$. Both were computed using $N = 80001$. Clearly, the higher order terms
contribute only small corrections to the linear eigenfunction. This is also seen in Figure \ref{fig:convergence_solution}, where the $L^2$-norms of the sums $u^\nu(x,y):=\sum_{|n|\leq\nu} u^{n,\nu}(x)e^{\ri nky}$ are plotted against $\nu$, showing the exponential decay 
of these terms, as expected from Theorem \ref{T:main}.

In Figure \ref{fig:num:solutions} we choose a large value of the scaling parameter, namely $\eps=20$, in order to make the difference between the approximation $\psi^{(M)}$ of the polychromatic solution and the linear eigenfunction $\varphi$ well visible. It is interesting to note that the series appears to converge even for this large value of $\eps$.
\begin{figure}[]
	\centering
	\begin{subfigure}[t]{0.4\textwidth}
		\includegraphics[width=\textwidth]{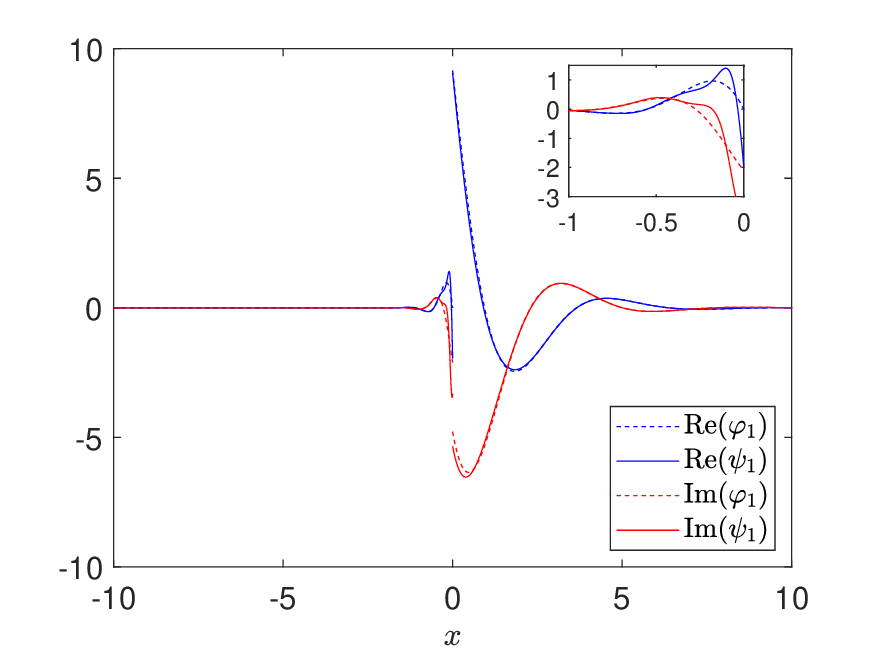}
	\end{subfigure}
	\hspace{0.03\textwidth}
	\begin{subfigure}[t]{0.4\textwidth}
		\includegraphics[width=\textwidth]{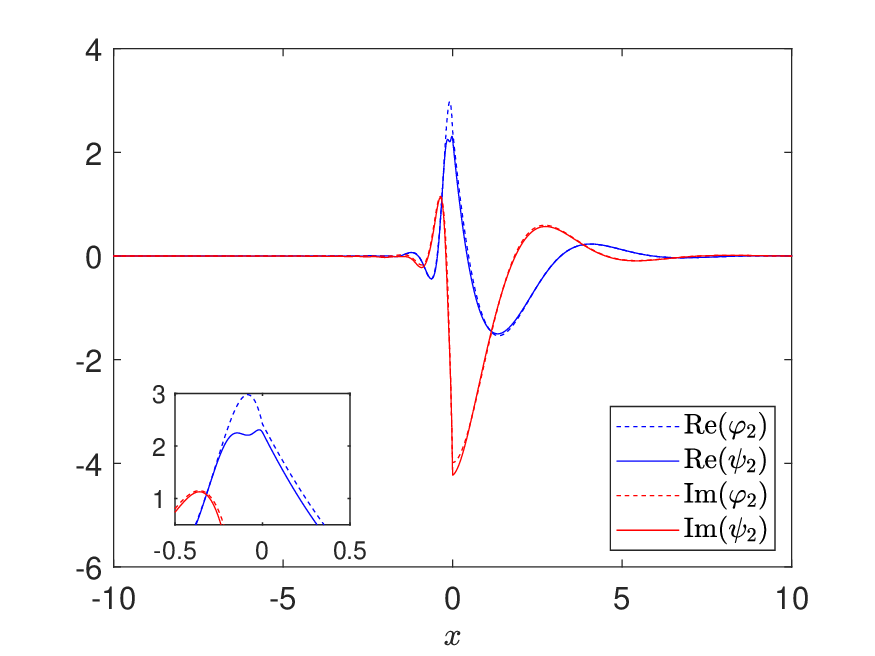}
	\end{subfigure}
	\caption{\label{fig:num:solutions}%
		Numerical approximation of the first and second component of $\eps \varphi$ and the partial sum $\psi^{(M)}$ with $M=15$, see \eqref{E:part-sum}, and with $\eps=20$. Here $\varphi$ is the eigenfunction \eqref{E:efn} of the truncated operator $\cL_{3}^T$ with parameters given by \eqref{eqn:parameters}. Insets show regions, in which the polychromatic solution deviates the most from the linear eigenfunction.}
\end{figure}

\begin{figure}[h]
	\centering
	\begin{subfigure}[c]{0.4\textwidth}
		\includegraphics[width=\textwidth]{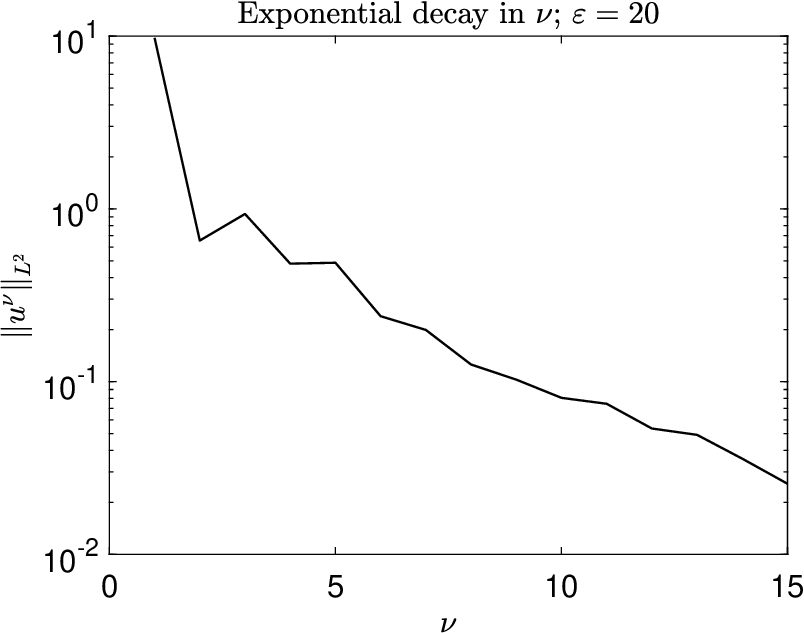}
	\end{subfigure}
	\hspace{0.03\textwidth}
	\begin{subfigure}[c]{0.4\textwidth}
		\includegraphics[scale=0.55]{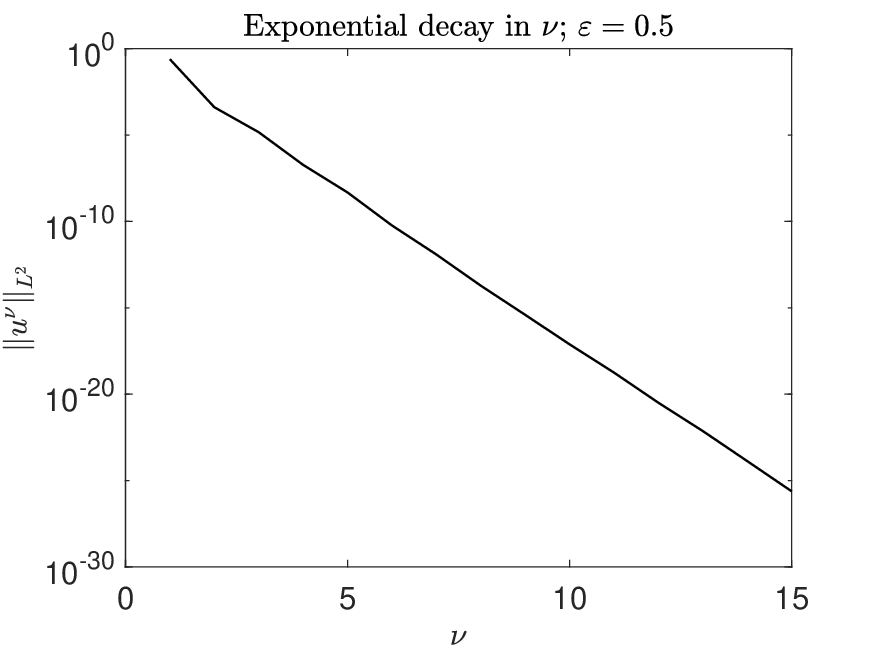}
	\end{subfigure}
	\caption{\label{fig:convergence_solution}%
		The $L^2$-norms over $\R\times (0,2\pi)$ of the numerically computed approximations $u^\nu(x,y):=\sum_{|n|\leq\nu} u^{n,\nu}(x)e^{\ri nky}$, i.e., all terms that decay at rate $\nu\omega_I$ evaluated at time $t=0$, again for the set of parameters \eqref{eqn:parameters} and $\varepsilon=20$ (left) or $\varepsilon=0.5$ (right). Note that the plot is semilogarithmic, such that the dependence on $\nu$ is exponential.}
\end{figure}

\section{Discussion} \label{Sec_diss}

We have proved the existence of polychromatic exponentially damped breather solutions of cubically nonlinear Maxwell equations in a waveguide geometry, where the guiding is in one dimension ($x$). The polarization model in Maxwell equations is dispersive, i.e., it allows memory terms in time. Our construction requires the memory to be finite. The solutions take the form of a Fourier series with complex frequencies and the series is constructed recursively via a sequence of linear ODE problems on $\R$ where the first term is an eigenfunction. This construction as well as the result appear to be new. The general existence theorem (Theorem \ref{T:main}) assumes firstly that all frequencies except the first one lie in the resolvent set and secondly it requires a certain decay of the resolvent operator in terms of the imaginary part of the spectral parameter. Our specific existence theorem (Theorem \ref{T:Lor-example}) is an application of Theorem \ref{T:main} to an example of a waveguide given by the one dimensional interface (at $x=0$) of two homogeneous materials, one being nonlinear and described by the Lorentz model with a finite memory and the other one linear and non-dispersive. The assumptions of Theorem \ref{T:main} are satisfied by this example under certain conditions on the spectrum of the interface problem.

Theorem \ref{T:main} easily generalizes to finitely many interfaces. In assumption \ref{ass:A-periodic} the susceptibilities then need to be eventually periodic only in the unbounded layers. In assumption \ref{ass:A-resolvEst} the same estimates need to hold in each layer. On the other hand, a corresponding generalization of Theorem \ref{T:Lor-example} is less trivial. The spectrum of the linear problem with more than one interface and without temporal truncation needs to be studied first. The proof becomes more technical also due to the additional interface conditions. A generalization to higher order polynomial nonlinearities can be covered by slightly modifying the recursive sequence of linear problems. For higher dimensional settings, i.e., with $\chi^{(j)}=\chi^{(j)}(x,y)$, or even $\chi^{(j)}=\chi^{(j)}(x,y,z)$, a general existence result analogous  to Theorem \ref{T:main} can still be formulated. Polychromatic solutions are then constructed using a sequence of linear PDEs on $\R^2$ or $\R^3$, resp. The main difficulty here is to generate an example which satisfies the required decay of the resolvent operator.

\appendix

\section{Convolutions and the Fourier Transform for Distributions of Compact Support}
\label{S:distrib}
This review is based on the standard discussions of distributions, convolutions, and the Fourier transform in \cite{rudinFA} and \cite{trevesTVS}. We denote the space of distributions in $\R^n$ by $\cD'(\R^n)$.
\begin{defn}
	For $n \in \N$ we define $\cD_c'(\R^n)$ as the space of distributions with compact support, i.e., $\Lambda \in \cD_c'(\R^n)$ means that $\Lambda \in \cD'(\R^n)$ and there exists a compact set $M\subset \R^n$ such that $\Lambda(\varphi)=0$ for all $\varphi \in C^\infty_c(\R^n)$ with $\supp~ \varphi\subset \R^n\setminus M$. We write $\supp~\Lambda \subset M$.
\end{defn}
The convolution between $\Lambda \in \cD'(\R^n)$ and $\varphi\in C^\infty_c(\R^n)$ is defined as $(\Lambda*_{\R^n}\varphi)(t_1,\dots,t_n):=\Lambda(\varphi((t_1,\dots,t_n)-\cdot))$. As shown in Theorem 6.24 in \cite{rudinFA}, distributions $\Lambda \in \cD_c'(\R^n)$ extend to continuous linear functionals on $C^\infty(\R^n)$. It is even the case, that the space $\cD_c'(\R^n)$ is exactly the functional analytic dual of $C^\infty(\R^n)$, see Theorem 24.2 in \cite{trevesTVS}. In particular, the convolution of our compactly supported $\chi^{(1)}(x), \chi^{(2)}(x)$, and $\chi^{(3)}(x)$ with the exponential functions $e^{(z)}$, $e^{(z_1,z_2)}$, and $e^{(z_1,z_2,z_3)}$ 
make sense. Here we define for $z_1,\dots, z_n \in \C$
$$e^{(z_1,\dots,z_n)}:\R^n\to \C, \ e^{(z_1,\dots,z_n)}(t_1,\dots,t_n):=e^{\ri(z_1t_1+\dots+z_nt_n)}.$$
We  have
\beq\label{E:convol-e}
\begin{aligned}
	\left(\chi^{(n)}(x)*_{\R^n}e^{(z_1,\dots,z_n)}\right)(t_1,\dots,t_n)&=\chi^{(n)}(x)(e^{(z_1,\dots,z_n)}((t_1,\dots,t_n)-\cdot)) \\
	&= e^{\ri(z_1t_1+\dots+z_nt_n)} \chi^{(n)}(x)(e^{(-z_1,\dots,-z_n)}).
\end{aligned}
\eeq

Next, we discuss the Fourier transform of $\Lambda \in \cD'_c(\R^n)$. Recall that for an $L^1$-function $f$ the temporal Fourier transform $\hat{f}$ was defined in \eqref{E:FT-L1}. In the notation of distributions, $\hat{f}(\omega_1,\dots,\omega_n) = f(e^{(\omega_1,...,\omega_n)})$ for all $(\omega_1,\dots,\omega_n)\in \R^n$ because for $\Lambda  \in \cD'(\R^n)$ the Fourier transform is a distribution defined via $\hat{\Lambda}(\varphi):=\Lambda(\hat{\varphi})$ for all $\varphi \in C^\infty_c(\R^n)$. The theorem of Paley-Wiener (see Theorem 7.23 in \cite{rudinFA}) shows that for $\Lambda \in \cD'_c(\R^n)$ the map $(z_1,\dots,z_n)\mapsto \Psi(z_1,\dots,z_n):=\Lambda(e^{(z_1,\dots,z_n)})$ is entire (in $\C^n$) and the restriction $\Psi|_{\R^n}$ equals the temporal Fourier transform $\hat{\Lambda}$. To simplify the notation, we use the notation $\hat{\Lambda}$ for $\Psi$ on $\C^n$, i.e., given $\Lambda \in \cD_c'(\R^n)$, we define
\beq\label{E:FT-distrib-comp}
\hat{\Lambda}(\omega_1,\dots,\omega_n):=\Lambda(e^{(\omega_1,\dots,\omega_n)}) \quad \forall (\omega_1,\dots,\omega_n)\in \C^n.
\eeq
This is consistent with the definition in \eqref{E:FT-L1} for $L^1$-functions.

Let us now apply the above discussion to our susceptibility tensors. For ease of notation we drop the lower indices of $\chi^{(n)}$. The convolution of the compactly supported $\chi^{(1)}(x)\in \cD'_c(\R)$ with the $t-$dependent exponential factor $e^{-\ri(n\omega_R+\ri\nu \omega_I)t}$ in our polychromatic ansatz yields the temporal Fourier transform of $\chi^{(1)}(x)$. In fact, for any $\omega \in \C$ we have from \eqref{E:convol-e}
\beq\label{E:chi1-convol}
\left(\chi^{(1)}(x)*_{\R}e^{(-\omega)}\right)(t)=e^{-\ri\omega t}\chi^{(1)}(x)(e^{(\omega)})=e^{-\ri\omega t}\hat{\chi}^{(1)}(x,\omega).
\eeq
Analogously, for for $\chi^{(2)} \in \cD_c'(\R^2)$ and $\chi^{(3)}(x)\in \cD'_c(\R^3)$ we have for any $\omega_1,\omega_2,\omega_3\in \C$
\beq\label{E:chi3-convol}
\begin{aligned}
	\left(\chi^{(2)}(x)*_{\R^2}e^{(-\omega_1,-\omega_2)}\right)(t,t)&=e^{-\ri(\omega_1+\omega_2) t}\hat{\chi}^{(2)}(x,\omega_1,\omega_2),\\
	\left(\chi^{(3)}(x)*_{\R^3}e^{(-\omega_1,-\omega_2,-\omega_3)}\right)(t,t,t)&=e^{-\ri(\omega_1+\omega_2+\omega_3) t}\hat{\chi}^{(3)}(x,\omega_1,\omega_2,\omega_3).
\end{aligned}
\eeq
Equations \eqref{E:chi1-convol} and \eqref{E:chi3-convol} explain the form of the polarization $\cP$ in \eqref{E:D-single-harm}.

\brem
In the case of the instantaneous response we have $\chi^{(1)}(x)=a^{(1)}(x)\delta$ with $\delta \in \cD'(\R)$, $\chi^{(2)}(x)=a^{(2)}(x) \delta$ with $\delta \in \cD'(\R^2)$, and $\chi^{(3)}(x)=a^{(3)}(x)\delta$ with $\delta \in \cD'(\R^3)$. Then, clearly, $\hat{\chi}^{(1)}(x,\omega)=a^{(1)}(x)$ for all $\omega \in \C$, $\hat{\chi}^{(2)}(x,\omega_1,\omega_2)=a^{(2)}(x)$ for all $\omega_1,\omega_2\in \C$,
and $\hat{\chi}^{(3)}(x,\omega_1,\omega_2,\omega_3)=a^{(3)}(x)$ for all $\omega_1,\omega_2,\omega_3\in \C.$
\erem

Finally, we explain in detail the statement of Remark \ref{R:chihat-cont} that continuity and differentiability of a map $f:\R\to\cD_c'(\R^n)$ implies the same for the Fourier transform $\hat{f}(\cdot,\omega ):\R\to\C$ for all $\omega\in\C^n$, where $\hat{f}(x,\omega):=\widehat{f(x)}(\omega)$. Since the weak dual topology on $\cD_c'(\R^n)$ is initial with respect to the dual pairings $p_\varphi:\cD_c'(\R^n)\to\C,~p(\varphi)(\Lambda):=\Lambda (\varphi)$ for all $\varphi\in C^\infty (\R^n)$,
$f$ is continuous if and only if the maps $x\mapsto f(x)\left( \varphi\right)$ are continuous for all $\varphi\in C^\infty(\R^n)$. In particular, $\hat{f}(\cdot ,\omega)=f(\cdot)\left( e^{(\omega)}\right)$ is a continuous map for all $\omega\in\C^n$.

Similarly, differentiability of the map $f$ at a point $x_0\in\R$ means that 
$$\lim_{x_0\to x} \frac{f(x)-f(x_0)}{x-x_0}=:f'(x_0)\in \cD_c'(\R^n)$$
exists. Again, because the weak dual topology is initial, this limit exists if and only if the limits 
$$\lim_{x\to x_0}\left(\frac{f(x)-f(x_0)}{x-x_0} \right)\left( \varphi\right) = \lim_{x\to x_0}\frac{f(x)(\varphi )-f(x_0)(\varphi )}{x-x_0} = f'(x_0)(\varphi )$$
exist for all $\varphi\in C^\infty (\R^n)$, or in other words if the map $x\mapsto f(x)(\varphi)$ is differentiable at $x_0$. Choosing in particular $\varphi = e^{(\omega )}$ shows the differentiability of $\hat{f}(\cdot,\omega)$.

\section{Drude Model with Truncated Memory}\label{app:Drude}
    We give here the reason for choosing the Lorentz model for a dispersive material in our main example instead of the simpler Drude model
	\beq\label{E:Drude}
	\hat{\chi}^{(1)}_D(\omega)=-\frac{c_D}{\omega^2+\ri\gamma\omega}
	\eeq
with $c_D, \gamma > 0$, see \cite{Pitarke_2007,ACL2018}.	One easily checks that in the time domain
		$$\chi^{(1)}_D(t):=\frac{c_D}{\gamma}(1-e^{-\gamma t})\theta(t),$$
	but \eqref{E:Drude} is the Fourier-Laplace transform of $\chi^{(1)}_D$ only for $\Imag(\omega)>0$, and again $\chi^{(1)}_D$ does not satisfy our finite memory assumption. Applying the same truncation 
    \beq\label{E:Drude-T}
	\chi^{(1)}_{D,T}(t):=\frac{c_D}{\gamma}(1-e^{-\gamma t})\mathbbm{1}_{[0,T]}(t)
	\eeq
	with $T>0$ as for the truncated Lorentz model in \eqref{E:Lorentz-T}, we get 
    \beq\label{E:Drude-T-FT}
	\hat{\chi}^{(1)}_{D,T}(\omega)=-\frac{c_D}{\omega^2+\ri\gamma\omega}\left[1-e^{\ri\omega T}\left(1-\frac{\ri\omega}{\gamma}(1-e^{-\gamma T})\right)\right].
	\eeq
	The interface with a non-dispersive material on the right and a Drude material on the left side of the interface is then modeled via 
	\beq\label{E:perm-DT}
	\perm(x,\omega)=\begin{cases}
		\perm_{-,D}(\omega):= \perm_0(1+\hat{\chi}^{(1)}_{D,T}(\omega)), & x<0,\\ 	\perm_+(\omega)= \perm_0(1+\alpha), & x>0
	\end{cases}
	\eeq
	and we denote the corresponding operator from \eqref{E:op-def} by $\cL_{nk,D}^T(\omega)$ while the operator with an untruncated Drude material on the left half plane, i.e., with

    \beq\label{E:perm-D}
	\perm(x,\omega)=\begin{cases}
		\perm_{-,D}^\infty(\omega):= \perm_0(1+\hat{\chi}^{(1)}_{D}(\omega)), & x<0,\\ 	\perm_+(\omega)= \perm_0(1+\alpha), & x>0,
	\end{cases}
	\eeq
    is denoted by $\cL^\infty_{nk,D}(\omega).$

	The spectrum of $\cL^\infty_{nk,D}$ was described in Example 3.5 of \cite{BDPW25}, showing that there are at most four eigenvalues outside the set $\Omega_0$, see \eqref{eqn:Omega0}, all of which are located in the strip $S_\gamma=\{\omega \in \C: \Imag (\omega)\in (-\gamma,0)\}$. We show now that in
	terms of the point spectrum the operator $\cL_{nk,D}^T$ is not a good approximation of the operator $\cL^{\infty}_{nk,D}$ in the sense of the following lemma.
	
	\begin{lemma}
		Let $k\in\R$ be arbitrary and $K\subset S_\gamma=\{\omega\in\C :~ \Imag\omega\in (-\gamma ,0)\}$ be any compact set that contains all eigenvalues outside $\Omega_0$ of the untruncated operator $\cL^{\infty}_{k,D}$. Then for $T>0$ large enough,
		there are no eigenvalues of $\cL_{k,D}^T$ inside of $K$.
	\end{lemma}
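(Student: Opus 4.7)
The plan is to adapt the dispersion-relation argument of Lemma~\ref{lem:evals_lorentz_close} to the Drude setting, exploiting the key difference between the Lorentz and Drude cases: the truncation correction in $\hat{\chi}^{(1)}_{D,T}$ carries the factor $e^{\ri\omega T}$ rather than $e^{(\ri\omega-\gamma)T}$, so on $S_\gamma$ it \emph{grows} exponentially in $T$ instead of decaying. Eigenvalues of $\cL_{k,D}^T$ outside $\Omega_0$ are precisely the roots of the (denominator-cleared) dispersion function
\[
G(\omega,T)=G_\infty(\omega)+\perm_0 c_D\!\left(k^2-\omega^2\boldsymbol{\mu}_0\perm_+\right)\!\left(1-\frac{\ri\omega}{\gamma}(1-e^{-\gamma T})\right)e^{\ri\omega T},
\]
where $G_\infty(\omega)=(\omega^2+\ri\gamma\omega)\bigl(k^2(\perm_++\perm_{-,D}^\infty)-\omega^2\boldsymbol{\mu}_0\perm_+\perm_{-,D}^\infty\bigr)$ is the $T$-independent dispersion function of the untruncated operator $\cL^\infty_{k,D}$. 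The two extra roots introduced by clearing the denominator, namely $\omega=0$ and $\omega=-\ri\gamma$, lie on $\partial S_\gamma$ and are therefore not in $K$.

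First I would use compactness of $K\subset S_\gamma$ to extract $\delta_1,\delta_2>0$ with $-\gamma+\delta_1\leq\Imag(\omega)\leq-\delta_2$ for all $\omega\in K$, yielding the uniform lower bound $|e^{\ri\omega T}|=e^{-\omega_I T}\geq e^{\delta_2 T}$. Next I would show that the coefficient of $e^{\ri\omega T}$ in $G(\omega,T)$ is bounded away from zero on $K$ for $T$ large. Its pointwise limit $\perm_0 c_D(1-\ri\omega/\gamma)(k^2-\omega^2\boldsymbol{\mu}_0\perm_+)$ vanishes only at $\omega=-\ri\gamma\in\partial S_\gamma$ and at the real points $\omega=\pm k/\sqrt{\boldsymbol{\mu}_0\perm_+}$, none of which lie in $K\subset S_\gamma$. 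By continuity and compactness this limit is bounded below by some $c_*>0$ on $K$, and the correction is $O(e^{-\gamma T})$ uniformly on $K$, so the full coefficient is bounded below by $c_*/2$ for $T\geq T_0$. Since $G_\infty$ is continuous, it is bounded on $K$ by some $C_\infty$, and any zero $\omega\in K$ of $G(\cdot,T)$ would then force
\[
e^{\delta_2 T}\leq|e^{\ri\omega T}|=\frac{|G_\infty(\omega)|}{\bigl|\perm_0 c_D(1-\tfrac{\ri\omega}{\gamma}(1-e^{-\gamma T}))(k^2-\omega^2\boldsymbol{\mu}_0\perm_+)\bigr|}\leq\frac{2C_\infty}{c_*},
\]
a contradiction for $T$ sufficiently large.

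The main obstacle is that the argument above rules out only eigenvalues in $K\setminus\Omega_0$, while the set $\Omega_0=\{\omega:\omega^2\perm_+(\omega)=0\text{ or }\omega^2\perm_{-,D}^T(\omega)=0\}$ itself depends on $T$ through the zeros of $\perm_{-,D}^T$, and eigenvalues inside $\Omega_0$ may even have infinite multiplicity. To close this gap I would apply the same exponential-dominance argument to the scalar equation $\perm_{-,D}^T(\omega)=0$: multiplied by $\omega^2+\ri\gamma\omega$ it again takes the form $A(\omega)+B(\omega,T)e^{\ri\omega T}=0$ with $B(\omega,T)\to c_D(1-\ri\omega/\gamma)\neq 0$ on $K$. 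The same compactness and exponential-dominance reasoning shows that $\perm_{-,D}^T$ has no zeros in $K$ for $T$ large, and since $0\notin K$ this yields $K\cap\Omega_0=\emptyset$ for such $T$, completing the proof.
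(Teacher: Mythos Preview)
Your argument is correct and follows essentially the same route as the paper's proof: both identify the denominator-cleared dispersion function as a sum of a $T$-bounded polynomial part and an exponentially dominant term carrying the factor $e^{\ri\omega T}$, observe that the latter's coefficient has its roots ($-\ri\gamma$ and the real points $\pm k/\sqrt{\boldsymbol{\mu}_0\perm_+}$) outside $S_\gamma$, and conclude by compactness that the two terms cannot cancel on $K$ for $T$ large. The only cosmetic difference is that the paper first multiplies through by $e^{-\ri\omega T}$ so that one term becomes uniformly small while the other stays bounded below, whereas you leave the equation as is and argue that the growing term cannot be balanced by the bounded one; these are equivalent. Your treatment of $\Omega_0$ via the same exponential-dominance argument applied to $\perm_{-,D}^T(\omega)=0$ is also the paper's approach.
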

	
	\begin{proof}
		All eigenvalues of $\cL_{k,D}^T$ outside of the set $\Omega_0$ are given by the dispersion relation
		\eqref{E:ev.cond} with $\perm$ defined by \eqref{E:perm-D}. Rewriting the dispersion relation, we see that any eigenvalue of $\cL_{k,D}^T$ must fulfill
		$$k^2\perm_+ + \perm_{-,D}(\omega)\left( k^2-\omega^2\boldsymbol{\mu}_0\perm_+\right)=0.$$

		We substitute $$\perm_{-,D}(\omega)=\perm_0\left( 1-\frac{c_D}{\omega^2+\ri\gamma\omega}\left[1-e^{\ri\omega T}\left(1-\frac{\ri\omega}{\gamma}(1-e^{-\gamma T})\right)\right]\right)$$
		and multiply by $\perm_0^{-1}(\omega^2+\ri\gamma\omega)e^{-\ri\omega T}$ to obtain the condition
		\beq \label{E:dispRel-Drude}
		\begin{aligned}
			F_T(\omega ):=&\left( (\omega^2+\ri\gamma\omega)\left( (\perm_0^{-1}k^2-\omega^2\boldsymbol{\mu}_0)\perm_+ +k^2\right) - c_D(k^2-\omega^2\boldsymbol{\mu}_0\perm_+)\right) e^{-\ri \omega T} \\
			&+ c_D\left( 1-\frac{\ri\omega}{\gamma}(1-e^{-\gamma T})\right) (k^2-\omega^2\boldsymbol{\mu}_0\perm_+) =0.
		\end{aligned}
		\eeq
		
		Choose now any compact set $K\subset S_\gamma$ as in the claim of the lemma. Then the polynomial $$\left( (\omega^2+\ri\gamma\omega) \left( (\perm_0^{-1}k^2-\omega^2\boldsymbol{\mu}_0)\perm_+ +k^2\right) - c_D(k^2-\omega^2\boldsymbol{\mu}_0\perm_+)\right)$$ is bounded on $K$, thus
		$$\Psi_T(\omega ):=\left( (\omega^2+\ri\gamma\omega) \left( (\perm_0^{-1}k^2-\omega^2\boldsymbol{\mu}_0)\perm_+ +k^2\right) - c_D(k^2-\omega^2\boldsymbol{\mu}_0\perm_+)\right) e^{-\ri \omega T}$$
		is uniformly small on $K$ for all $T>0$ large enough because $\Imag (\omega)\leq -c <0$ for all $\omega\in K$. More precisely, for any $\delta>0$, there is some $T_K(\delta )>0$ such that
		$$\max\limits_{\omega\in K} |\Psi_T(\omega )|<\delta,~~\forall T>T_K.$$
		This means, however, that for $T>T_K$, \eqref{E:dispRel-Drude} can only be satisfied inside of $K$ if also $|\Phi_T(\omega )|<\delta$, where
		$$\Phi_T(\omega ):= c_D\left( 1-\frac{\ri\omega}{\gamma}(1-e^{-\gamma T})\right) (k^2-\omega^2\boldsymbol{\mu}_0\perm_+).$$
		Since $\Phi_T$ is a cubic polynomial with roots $r_1=\frac{-\ri\gamma }{1-e^{-\gamma T}}$ and $r_{2,3}=\pm\sqrt{\frac{k^2}{\perm_+}}$, $\Phi_T$ will only be small close to those roots. In other words,
		for any $\eta>0$ there is some $\delta_0>0$ such that
		\beq \label{E:Drude-ballEstimate}
		\{ \omega\in\C :~|\Phi_T(\omega )|<\delta \} \subset B_\eta (r_1)\cup B_\eta (r_2)\cup B_\eta (r_3),~~\forall \delta<\delta_0.
		\eeq
		
		To conclude that there are no solutions of \eqref{E:dispRel-Drude} in $K$ for large $T$, it suffices to show that for some $\eta>0$ we have $B_\eta(r_1)\cap K = B_\eta(r_2)\cap K =B_\eta(r_3)\cap K = \emptyset$ for all $T$ large enough. Since $-\ri\gamma, r_2$, and $r_3$ are not in $S_\gamma$ (note that $r_{2,3}\in\R$) and thus not in $K$, we set
		$$\eta := \frac{1}{3} \dist (\{-\ri\gamma ,r_2,r_3\},K)>0.$$
		Clearly, because $r_2$ and $r_3$ are $T$-independent, we get $B_\eta(r_2)\cap K =B_\eta(r_3)\cap K = \emptyset$ for all $T$.
		As $e^{-\gamma T}\to 0$ for $T\to\infty$, we choose $T_0$ such that $|r_1+\ri\gamma|<\eta$ for all $T>T_0$ and get $B_\eta(r_1)\cap K =\emptyset$ for all $T>T_0$. Hence, we have the existence of $\delta>0$ such that
		$$\{ \omega\in\C :~|\Phi_T(\omega )|<\delta \}\cap K =\emptyset ,$$
		i.e., $|\Phi_T(\omega )|\geq\delta$ in $K$,  provided $T>T_0$.
		
		Choosing now $T>\max\{T_0,T_K(\delta )\}$, we conclude
		$$|F_T(\omega )|\geq |\Phi_T(\omega)|-|\Psi_T(\omega )|>\delta - \delta =0, ~~\forall \omega \in K,$$
		so there are no eigenvalues of $\cL_{k,D}^T$ in the set $K\setminus \Omega_0$.

        A similar argument shows, that $\perm_-(\omega)\neq 0$ for all $\omega\in S_\gamma$ if $T$ is large enough. Thus, also eigenvalues from the set $\Omega_0$ cannot lie in $K$.
	\end{proof}
	
	The reason, why the spectrum of the Drude model is strongly perturbed by the truncation in the time domain, is that  $\chi^{(1)}_D(t)$ does not decay for large $t$. This is in contrast with the Lorentz model, where $\chi^{(1)}_L(t)\to 0$ exponentially as $t\to\infty$. In the Drude model the history does not become less important with age.

\section{Discretization on a Staggered Grid} \label{app:num}

In this appendix we give a brief overview of the discretization used to compute solutions to equation \eqref{eqn:num:effective-system}.
We discretize on a large interval $[-d,d]$ with some $d>0$ and impose perfect conductor boundary conditions, i.e., $u_2(-d)=u_2(d)=0$.
To avoid spectral pollution we choose to work on a staggered grid as suggested in \cite{LABV1990}, i.e., we fix some $N\in \N_{\mathrm{even}}$, implying a step size $h:=\frac{2d}{N}$, and define the integer grid points $x_j:=-d + jh$, $j=0,1,...,N$ and the half integer grid points $\tilde{x}_j=-d +(j+\frac{1}{2})h$, $j=0,1,...,N-1$.
The functions $u_1$ and $u_2$ are then evaluated at $(x_j)_j$ and $(\tilde{x}_j)_j$ respectively. Thus, they are approximated by 
$$\mathbf{U}:=(u_1(x_0),u_1(x_1),...,u_1(x_N)) \text{ and } \mathbf{V}:=(u_2(\tilde{x}_0),u_2(\tilde{x}_1),...,u_2(\tilde{x}_{N-1})),$$
yielding $2N+1$ (complex valued) unknowns. Since $u_1$ is not necessarily continuous in $x_{N/2}=0$, the value of $\bU$ at this point is to be understood as the left sided limit $\mathbf{U}_{N/2}=\lim_{x\to 0^-} u_1(x)$, while the right sided limit will be determined by the interface condition as explained later.

Evaluating the first equation in \eqref{eqn:num:effective-system} at the integer grid points, but also in the right sided limit at the interface $x_{N/2}=0$, and the second equation at the half integer grid points produces $2N+2$ complex equations. Hence, we need to introduce another unknown to obtain a uniquely solvable system, for which we choose 
$\bV_{N+1}:=u_2(0)$. We thus also fix both the right and left sided limit of $u_2$ to the same value, which ensures continuity of $u_2$ across the interface, i.e., the interface condition $\llbracket u_2\rrbracket=0$. 
Following \cite{LABV1990}, the derivatives in \eqref{eqn:num:effective-system} are expressed by the standard second order central difference quotients, where near the interface
also the right sided limit $u_1(0^+):=\lim_{x\to 0^+} u_1(x)$ appears. Since that is not part of the $2N+1$ unknowns, it needs to be expressed by the rest of them via the second interface condition $\llbracket \ri nk u_1 -u_2'\rrbracket =0$, 
i.e., $$\lim_{x\to 0^+} \ri nk u_1(x) - u_2'(x) = \lim_{x\to 0^-} \ri nk u_1(x) -u_2'(x),$$
where the left and right sided derivatives of $u_2$ in $0$ can be expressed by the one sided difference quotients 
$$\lim_{x\to 0^\pm}u_2'(x)\approx \pm\frac{1}{3h}\left( -8u_2(0) +9u_2\left(\pm\frac{h}{2}\right) - u_2\left(\pm\frac{3h}{2}\right)\right).$$

We may thus express the right sided limit $u_1(0^+)$ as 
\beq \label{eqn:num:interface}
u_1(0^+)\approx u_1(0^-) + \frac{\ri}{nk}\frac{1}{3h} \left( u_2\left(-\frac{3h}{2}\right) - 9 u_2\left(-\frac{h}{2}\right) +16u_2(0)-9u_2\left(\frac{h}{2}\right) + u_2\left(\frac{3h}{2}\right)\right) .
\eeq

\begin{table}[t]
	\centering
	\begin{tabular}{c c l}
		Function & evaluated at & Discretization\\
		\hline 
		$u_1$ & $x_{j}$ (left sided limit in $x_{N/2}=0$) & $u_1(x_j)\approx \bU_j$ \\
		$u_2$ & $\tilde{x}_j$ & $u_2(\tilde{x}_j)\approx \bV_j$\\
		$u_2$ & $0$ & $u_2(0)\approx \bV_N$\\
		$u_1$ & right sided limit in $0^+$ & $u_1(0^+)\approx \bU_{N/2} + \frac{\ri}{nk}\frac{1}{3h} \left( \bV_{N/2 -2}- 9 \bV_{N/2 -1} \right.$\\ 
		& & \qquad \qquad \qquad \qquad \qquad $\left. +16\bV_N - 9\bV_{N/2} + \bV_{N/2+1}\right)$\\
		\hline
		$u_1'$ & $\tilde{x}_j$ with $j\neq \frac{N}{2}$ & $u_1'(\tilde{x}_j)\approx \frac{1}{h} (\bU_{j+1}-\bU_j)$\\
		$u_1'$ & $\tilde{x}_{N/2}$ & $u_1'(\tilde{x}_{N/2})\approx \frac{1}{h}\left( \bU_{N/2+1} - u_1(0^+)\right)$\\
		$u_2'$ & $x_j$ with $ j \neq \frac{N}{2}$ & $u_2'(x_j)\approx \frac{1}{h}\left( \bV_j-\bV_{j-1}\right)$\\
		$u_2'$ & $0\pm$ & $\lim_{x\to 0^\pm} u_2'(x)\approx \pm\frac{1}{3h}\left( -8 \bV_N +9\bV_{(N-1\pm 1)/2} - \bV_{(N-1\pm 3)/2}\right)$\\
		\hline 
		$u_2''$ & $\tilde{x}_j$ with $j\neq \frac{N-1}{2}\pm \frac{1}{2}$ & $u_2''(\tilde{x}_j) \approx \frac{1}{h^2}\left( \bV_{j-1}-2\bV_j +\bV_{j+1}\right)$\\
		$u_2''$ & $\tilde{x}_{(N-1 \pm 1)/2} $ & $u_2''(\tilde{x}_{(N-1\pm 1)/2}) \approx \frac{1}{3h^2}\left( 8 \bV_N -12 \bV_{(N-1\pm 1)/2} +4 \bV_{(N-1\pm 3)/2}\right)$
	\end{tabular}
	\caption{The finite difference schemes used in the discretization of \eqref{eqn:num:effective-system}.}
	\label{tab:discretization}
\end{table}

The second effect of the interface on our discretization is the fact that $u_2$ (while continuous) is not necessarily differentiable across the interface, so the difference quotients need to be adapted there to include only values of $u_2$ on the same side of the interface.
In summary, we arrive at the discretization of equations \eqref{eqn:num:effective-system} shown in Table \ref{tab:discretization}. The result is an inhomogeneous system of linear equations, which can be solved by standard algorithms (e.g. Matlab's $\setminus$ operator).

To experimentally deduce an order of convergence for this method, we first calculate a reference solution as a discretized partial sum $\psi^{(M)}$, see \eqref{E:part-sum}, with $M = 10$ on a very fine grid ($d=100$, $N\approx 2.5\cdot 10^6$). Next, we compute approximations on coarser grids and determine the $L^2$-norm of their differences from the reference solution. Figure \ref{fig:convInH} shows such an error plot, confirming a convergence of order $O(N^{-2})$ as one would expect from the use of second order finite difference quotients.

\begin{figure}
	\centering
	\includegraphics[width=0.5\textwidth]{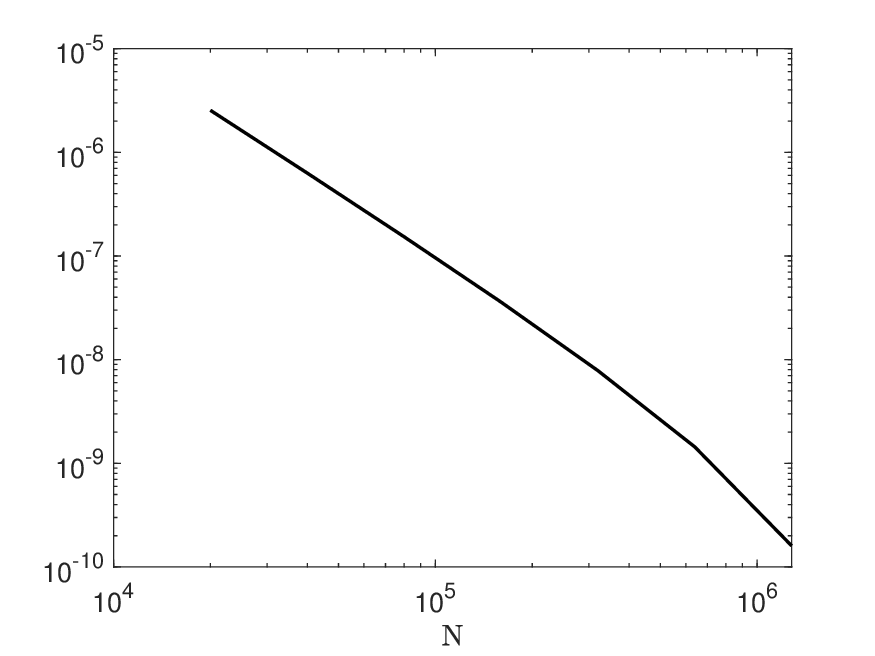}
	\caption{The $L^2$-norms of the error in the numerical solution $(u_1,u_2)$ calculated on a grid with $N$ points as a partial sum with $M=10$. The reference solution was calculated on a grid with $d=100$ and with $N\approx 2.5\cdot 10^6$ points using the same method (described in App. \ref{app:num}). The susceptibilities are given by \eqref{E:Lorentz-interf} and \eqref{E:chi23-ex} with the parameters \eqref{eqn:parameters}, \eqref{eqn:parameters-NL} and with $\varepsilon=0.5$. }
	\label{fig:convInH}
\end{figure}

	\bibliographystyle{abbrv}
	\bibliography{biblio-polychrom}
\end{document}